\let\oldtocsection=\tocsection
\let\oldtocsubsection=\tocsubsection
\let\oldtocsubsubsection=\tocsubsubsection
\renewcommand{\tocsection}[2]{\hspace{0em}\oldtocsection{#1}{#2}}
\renewcommand{\tocsubsection}[2]{\hspace{1em}\oldtocsubsection{#1}{#2}}
\renewcommand{\tocsubsubsection}[2]{\hspace{2em}\oldtocsubsubsection{#1}{#2}}
\newcounter{notes}
\def\a{\alpha}
\def\b{\beta}
\def\d{\delta}
\def\g{\gamma}
\def\l{\lambda}
\newcommand{\T}{\mathcal{T}}
\newcommand{\N}{\mathbb{N}}
\renewcommand{\to}{\longrightarrow}
\def\co{\colon\thinspace}
\newcommand{\Uom}{\boldsymbol{\omega}}
\newcommand{\Tc}{\mathcal{T}}
\newcommand{\C}{\mathbb{C}}
\newcommand{\Z}{\mathbb{Z}}
\newcommand{\R}{\mathbb{R}}
\newcommand{\PSL}{\mathrm{PSL}}
\newcommand{\Out}{\mathrm{Out}}
\newcommand{\SLtwoC}{\mathrm{SL}(2,\C)}
\newcommand{\SLtwoR}{\mathrm{SL}(2,\R)}
\newcommand{\MCG}{\mathcal{MCG}}
\newcommand{\X}{\mathfrak{X}}
\newcommand{\Sc}{\mathcal{S}}
\DeclareMathOperator{\Tr}{\mathrm{tr}}
\newtheorem{Theorem}{Theorem}[section]
\newtheorem{Lemma}[Theorem]{Lemma}
\newtheorem{Proposition}[Theorem]{Proposition}
\newtheorem{Corollary}[Theorem]{Corollary}
\newtheorem{introthm}{Theorem}
\newtheorem{Definition}[Theorem]{Definition}
\newtheorem{Remark}[Theorem]{Remark}
\begin{document}

\title{On the character variety of the three--holed projective plane}

\author{Sara Maloni}
\address{Department of Mathematics, Brown University}
\email{sara\_maloni@brown.edu}
\urladdr{http://www.math.brown.edu/$\sim$maloni}

\author{Fr\'{e}d\'{e}ric Palesi}
\address{Aix Marseille Universit\'{e}, CNRS, Centrale Marseille, I2M, UMR 7373, 13453 Marseille, France}
\email{frederic.palesi@univ-amu.fr}
\urladdr{www.latp.univ-mrs.fr/$\sim$fpalesi}

\begin{abstract}
We study the (relative) $\SLtwoC$ character varieties of the three-holed projective plane and the action of the mapping class group on them. We describe a domain of discontinuity for this action, which strictly contains the set of primitive stable representations defined by Minsky, and also the set of convex-cocompact characters. We consider the relationship with the previous work of the authors and S. P. Tan on the character variety of the four-holed sphere. 
\end{abstract}

\maketitle

\tableofcontents

\section{Introduction}\label{s:intro}

In this article we continue the study of the character variety $\mathfrak{X} = \mathfrak{X}(F_3, \SLtwoC)$ started in \cite{mal_ont}, in joint work with Ser Peow Tan. 

Character varieties $\X(\Gamma, G)$, which are the (geometric invariant) quotient $\mathrm{Hom}(\Gamma, G)//G$ of the spaces of representations of a word hyperbolic group $\Gamma$ into a semi-simple Lie group $G$ by conjugation, have been extensively studied. Here we will focus on the study of the action of the outer automorphism group $\Out(\Gamma)$ on $\X(\Gamma, G)$ given by $\theta([\rho]) = [\rho \circ \theta^{-1}]$. This question is motivated by the classical example of the proper discontinuous action of the mapping class group $\MCG(\Sigma)$ on the Teichm\"uller space $\T(\Sigma)$ of a closed orientable surface $\Sigma$. In fact, $\T(\Sigma)$ corresponds to the connected component of  $\mathfrak{X}(\pi_1(\Sigma), \PSL_2(\R))$ consisting of discrete and faithful representations of $\pi_1(\Sigma)$ into $\PSL_2(\R)$, and $\MCG(\Sigma)$ is an index $2$ subgroup of the outer automorphism group $\Out(\pi_1(\Sigma))$. It is also conjectured that $\T(\Sigma)$ is the biggest domain of discontinuity for the $\MCG(\Sigma)$-action. See Canary \cite{can_dyn} for a very interesting survey on this topic.

If one considers surfaces $\Sigma_{g,b}$ with non empty boundary, then the fundamental group $\pi_1(\Sigma_{g,b})$ is a free group $F_{n}$ and the mapping class group $\MCG(\Sigma_{g,b})$ is a subgroup of $\Out(F_n)$. While the action of $\Out(F_n)$ on $\X$ is well-known to be properly discontinuous on the set of discrete, faithful, convex-cocompact (i.e. Schottky) characters, the action on the complement of these characters is more mysterious. Minsky \cite{min_ond} studied this action, and described the set of {\em primitive-stable representations} $\X_{\mathrm{ps}}$--the ones such that the axes of primitive elements are uniform quasi-geodesics. He proved that $\X_{\mathrm{ps}}$ is an open domain of discontinuity for the action which is strictly larger than the set of discrete, faithful, convex-cocompact (i.e. Schottky) characters.

Another approach in the study of the character varieties $\mathfrak{X}(F_n, \SLtwoC)$ was introduced by Bowditch in \cite{bow_mar} and this approach was later generalized by Tan, Wong and Zhang \cite{tan_gen}, and by the authors and Tan \cite{mal_ont}, among others. They defined a domain of discontinuity $\X_{Q}$, the Bowditch set of representations, which contains the set $\X_{\mathrm{ps}}$, and hence is also strictly larger than the set of discrete, faithful, convex-cocompact (i.e. Schottky) characters. Bowditch's idea was to use a combinatorial viewpoint using trace functions on simple closed curves. In \cite{mal_ont} the authors and Ser Peow Tan generalised those methods and studied the case $n = 3$. In \cite{mal_ont} we viewed the free group of rank three $F_3$ as the fundamental group of the four-holed sphere, while in this article we will consider $F_3$ as the fundamental group of the three-holed projective plane.

\vskip 5pt

By a classical result on character varieties, see for example Fricke and Klein \cite{fri_vor}, the variety $\mathfrak{X}$ can be identified with the set of septuples $(a,b,c,d, x,y,z) \in \C^7$ such that:
\begin{equation} \label{vertex}
  a^2+b^2+c^2+d^2+abcd = x(ab+cd)+y(bc+ad)+z(ac+bd)+4-x^2-y^2-z^2-xyz,
\end{equation}
where  $a= \Tr( \rho (\a)) $, $b = \Tr (\rho (\b))$, $c = \Tr (\rho (\g ))$, $d = \Tr (\rho (\a\b\g))$, $x= \Tr( \rho (\a\b))$, $y = \Tr (\rho (\b\g))$, $z = \Tr (\rho (\a\g))$. 

\begin{figure}
[hbt] \centering
\includegraphics[height=4 cm]{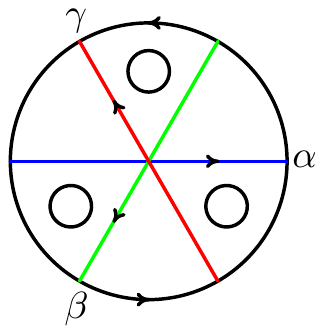}
\includegraphics[height=4 cm]{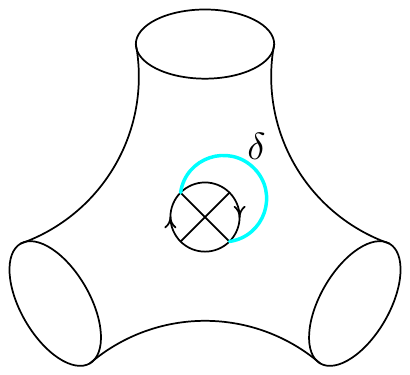}
\caption{The space $N_{1, 3}$ (in two different pictures) and the simple closed curves corresponding to $\a$ (in blue), $\b$ (in green), $\g$ (in red), $\d$ (in cyan).}
\label{fig:n13}
\end{figure}

We can identify $F_3=\langle \alpha,\beta,\gamma,\delta \mid \alpha \beta \gamma \delta \rangle$ with the fundamental group of a three-holed projective plane $N$ so that $\alpha\beta, \beta\gamma, \alpha\gamma$ correspond to the three boundary components $\partial N$ of $N$, see Figure \ref{fig:n13}. In this case, the relative character variety $\mathfrak{X}_{(x,y,z)}(N)$, that is, the set of (classes of) representations for which the traces of the boundary curves are fixed, can be represented as quadruples $(a,b,c,d)\in \C^4$ which satisfy \eqref{vertex}: $$\mathfrak{X}_{(x,y,z)}(N) = \{ (a,b,c,d)\in \C^4 \mid \mbox{equation }\eqref{vertex} \mbox{ holds.} \}.$$
On the other hand, if we identify $F_3$ to the fundamental group of a four-holed sphere $S$, with the elements $\alpha, \beta, \gamma, \delta$ identified with $\partial S$, as we did in \cite{mal_ont}, then the relative character variety $\mathfrak{X}_{(a,b,c,d)}(S)$ is identified with triples $(x,y,z)\in \C^3$ satisfying \eqref{vertex}:
$$\mathfrak{X}_{(a,b,c,d)}(S) = \{ (x,y,z)\in \C^3 \mid \mbox{equation }\eqref{vertex} \mbox{ holds.} \}.$$ This shows that these two points of view are somehow `dual' to one another.

When considering $F_3$ as the fundamental group of the three-holed projective plane $N$ or of the four-holed sphere $S$, one can study the dynamics of the action of the (pure) mapping class groups $\mathcal{MCG}(N)$ and $\mathcal{MCG}(S)$, which are proper subgroups of $\Out(F_3)$. In \cite{mal_ont}, the authors and Ser Peow Tan studied the action of $\mathcal{MCG}(S)$ on $\X$ and described a domain of discontinuity $\X_Q(S)$ which strictly contains $\X_{\mathrm{ps}}$. More precisely, we studied the action of the group $\mathbb{Z}_2 \star \mathbb{Z}_2 \star \mathbb{Z}_2$ on $\mathfrak{X} = \{ (a,b,c,d, x, y, z)\in \C^7 \mid \mbox{equation }\eqref{vertex} \mbox{ holds.}\}$ generated by the following involutions:
\begin{equation}\label{eqn:theta_x}
\begin{aligned}
  \theta_x(a,b,c,d,x,y,z)&=&(a,b,c,d, p-yz-x, y,z),\\
   \theta_y(a,b,c,d,x,y,z)&=&(a,b,c,d, x, q-xz-y,z),\\
   \theta_z(a,b,c,d,x,y,z)&=&(a,b,c,d, x, y,r-xy-z),
\end{aligned}
\end{equation}

where $$p=ab+cd, \quad q=bc+ad, \quad r=ac+bd, \quad s=4-a^2-b^2-c^2-d^2-abcd.$$
The mapping class group $\MCG(S)$ is an order $2$ subgroup of the group $\mathbb{Z}_2 \star \mathbb{Z}_2 \star \mathbb{Z}_2$ defined above. These involutions are defined by exchanging the two solutions of Equation $\eqref{vertex}$, considered as a quadratic equation in one of the variable $x$, $y$ or $z$, respectively.

In this article, we focus on the dynamics of $\mathcal{MCG}(N)$ on $\X$. As for the four-holed sphere case, we actually study the action of $\mathbb{Z}_2 \star \mathbb{Z}_2 \star \mathbb{Z}_2 \star \mathbb{Z}_2$ given by the following involutions:
\begin{equation}\label{eqn:theta_a}
\begin{aligned}
  \theta_a(a,b,c,d,x,y,z)&=&(xb+zc+yd-bcd-a,b,c,d, x, y,z),\\
   \theta_b(a,b,c,d,x,y,z)&=&(a,xa+yc+zd-acd-b,c,d, x, y,z),\\
   \theta_c(a,b,c,d,x,y,z)&=&(a,b,za+yb+xd-abd-c,d, x, y,z),\\
   \theta_d(a,b,c,d,x,y,z)&=&(a,b,c,ya+zb+xc-abc-d, x, y,z).
\end{aligned}
\end{equation}

The product of two of these involution correspond to a Dehn twist about a $2$--sided simple closed curve in $N$ (as it was the case for the four-holed-sphere). This new point of view is very interesting. Among other reasons, it turns out that the Torelli subgroup $\Tc_n$ of $\mathrm{Out}(F_3)$ is an index two subgroup of the group generated by the seven involutions $\theta_a, \theta_b, \theta_c, \theta_d, \theta_x, \theta_y, \theta_z$, as we will prove in details in Appendix \ref{app:Torelli}. See also Remark \ref{Torelli}. We hope to combine these two approaches in a future paper, and study the action of the Torelli subgroup on the full character variety $\mathfrak{X}$.

The main result of this paper is the following:

\begin{introthm}\label{Main}
  There exists an open domain of discontinuity $\X_Q(N)$ for the action of $\MCG(N)$ on $\X$ which strictly contains $\X_{\mathrm{ps}}$ (and so the set of discrete, faithful, convex-cocompact characters).
\end{introthm}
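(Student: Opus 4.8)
The plan is to mimic the strategy of \cite{mal_ont} for the four-holed sphere, transporting it to the three-holed projective plane $N$. The key object will be a \emph{Bowditch-type set} $\X_Q(N) \subset \X$ defined by a combinatorial condition on trace functions along the $2$-sided simple closed curves of $N$. Concretely, the mapping class group $\MCG(N)$ acts on a tree-like complex indexed by the isotopy classes of (essential, $2$-sided, non-peripheral) simple closed curves in $N$; by Scharlemann's classification of curves on $N_{1,3}$, these are organized combinatorially so that the Dehn twists --- realized as products $\theta_i \theta_j$ of the involutions in \eqref{eqn:theta_a} and \eqref{eqn:theta_x} --- act on an associated graph (an analogue of the Markoff-triple tree, but reflecting the $4$-valent or tripod-like combinatorics arising from the four boundary-type variables $a,b,c,d$). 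I would define $[\rho] \in \X_Q(N)$ to mean: (i) $\Tr\rho(\gamma) \notin [-2,2]$ for every such simple closed curve $\gamma$, and (ii) a Bowditch-type finiteness condition, namely that only finitely many such $\gamma$ satisfy $|\Tr\rho(\gamma)| \le K$ for a suitable constant (or, equivalently, an "attracting arrow"/no-circuits condition on the oriented graph obtained by pointing each edge toward the larger absolute trace).

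The proof then proceeds in the following steps. \textbf{Step 1: Openness.} Show $\X_Q(N)$ is open in $\X$. This follows from a continuity/stability argument: the defining inequalities are open conditions, and one shows the "bad" set (where the finiteness fails) is closed, using an argument that a uniform lower bound on traces along an appropriate finite subtree propagates to all curves, exactly as in \cite{bow_mar} and \cite{mal_ont}. \textbf{Step 2: $\MCG(N)$-invariance.} Since $\MCG(N)$ permutes the simple closed curves of $N$, the defining condition is manifestly invariant under the subgroup of $\langle \theta_a,\dots,\theta_z\rangle$ corresponding to $\MCG(N)$. \textbf{Step 3: Proper discontinuity.} This is the technical heart. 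One shows that on $\X_Q(N)$ there is, for each character, a well-defined finite nonempty set of "minimal" simple closed curves (those realizing the smallest trace absolute value), that this set is $\MCG(N)$-equivariant, and that its stabilizer in $\MCG(N)$ is finite; combined with a local-finiteness argument (a compact set meets only finitely many translates because the minimal-curve sets vary controllably), this yields proper discontinuity. \textbf{Step 4: $\X_{\mathrm{ps}} \subsetneq \X_Q(N)$.} Inclusion: if $[\rho]$ is primitive-stable then axes of primitive elements, in particular of the simple closed curves of $N$, are uniform quasigeodesics, forcing $|\Tr\rho(\gamma)|$ to grow and giving both conditions (i) and (ii). Strictness: exhibit an explicit family --- e.g. characters where one boundary trace is chosen so that some non-simple primitive word has small trace while all simple-curve traces stay large --- lying in $\X_Q(N) \setminus \X_{\mathrm{ps}}$, paralleling the examples in \cite{mal_ont,tan_gen}. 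Since convex-cocompact (Schottky) characters are primitive-stable, the containment of that set follows a fortiori.

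The main obstacle I anticipate is \textbf{Step 3}, and within it the correct bookkeeping of the curve combinatorics of $N_{1,3}$. Unlike the four-holed sphere, where the relevant simple closed curves are parametrized cleanly (and the three involutions $\theta_x,\theta_y,\theta_z$ act on a familiar trivalent tree), the three-holed projective plane has a more intricate set of $2$-sided simple closed curves: besides the three curves $\alpha\beta,\beta\gamma,\alpha\gamma$ and the curve $\delta$, there are infinitely many others obtained by twisting, and crucially some one-sided curves and curves whose regular neighborhoods are M\"obius bands, which do not carry Dehn twists. I expect the work to lie in: (a) isolating exactly which of the seven involutions $\theta_a,\dots,\theta_z$ generate $\MCG(N)$ (as opposed to the larger Torelli-related group), using the appendix's analysis; (b) building the right graph/complex on which this group acts with the "Fibonacci growth" property needed for the Bowditch argument; and (c) verifying the trace identities (consequences of \eqref{vertex} and the action \eqref{eqn:theta_a}) that make the edge-orientation argument work --- i.e.\ that around each "vertex" of the graph the traces satisfy a recursion forcing all-but-finitely-many edges to point outward. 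Once these structural lemmas are in place, the openness, invariance, and primitive-stable comparison should follow along the lines of \cite{mal_ont} with only routine modifications.
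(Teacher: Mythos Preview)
Your overall architecture is the right one and matches the paper's: define a Bowditch-type set by conditions (i) and (ii) on traces of $2$--sided simple closed curves, build an oriented graph on which the arrows encode growth, extract a finite attracting object, and use its stability to get openness and proper discontinuity. Two points, however, are either misidentified or genuinely different from what the paper does, and the first of them is a real gap.

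\textbf{The combinatorial model.} You expect the difficulty to lie in organizing the $2$--sided simple closed curves of $N_{1,3}$ and in deciding which of the seven involutions generate $\MCG(N)$. The paper resolves this differently from what you outline. The primary combinatorial objects are not the $2$--sided curves but the \emph{$1$--sided} curves: the graph $\Upsilon$ is a $4$--valent tree (dual to the curve complex of $N$) whose \emph{regions} are single $1$--sided curves, whose \emph{faces} are pairs of $1$--sided curves intersecting once, whose \emph{edges} are triples, and whose \emph{vertices} are quadruples. The crucial observation (Remark~\ref{2sided}) is that $2$--sided curves are in bijection with such pairs, so they sit as \emph{faces} of $\Upsilon$, not as vertices of a standalone graph. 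The Markoff map $\psi$ assigns values to regions (traces of $1$--sided curves), and the edge/vertex relations come from the quadratic in each of $a,b,c,d$. Correspondingly, $\MCG(N)$ is, up to finite index, generated by the \emph{four} involutions $\theta_a,\theta_b,\theta_c,\theta_d$ only; the three involutions $\theta_x,\theta_y,\theta_z$ belong to the $\MCG(S_{0,4})$ side of the story and play no role here. Your remark that $\alpha\beta,\beta\gamma,\alpha\gamma$ and $\delta$ are among the relevant $2$--sided curves is off: $\alpha\beta,\beta\gamma,\alpha\gamma$ are the boundary components (and are fixed), while $\alpha,\beta,\gamma,\delta$ are $1$--sided. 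Without this $1$--sided/$4$--valent picture the Fork Lemma, the escaping-ray analysis, and the Fibonacci estimates do not get off the ground.

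\textbf{Proper discontinuity.} Your Step~3 proposes to pick out, for each $\rho$, the finite set of curves of minimal $|\Tr|$ and argue via equivariance and finite stabilizers. The paper does not do this; instead it constructs, for each $\psi$ and each $K\ge 2+M$, an \emph{attracting subtree} $T_\psi(K)\subset\Upsilon$ assembled from attracting arcs in the boundaries of the faces $\xi\in\Upsilon^{(2)}_\psi(K)$, proves $T_\psi(K)$ is finite exactly when $\psi$ satisfies the BQ--conditions, and then shows (Lemma~\ref{lem:treestable}) that $T_\phi(K)\subset T_\psi(K)$ for all $\phi$ in a neighborhood of $\psi$. Proper discontinuity follows because a compact $C\subset(\mathbf\Psi_{\Uom})_Q$ is covered by finitely many such neighborhoods, giving a single finite tree $\mathcal T$ containing every $T_\phi(K)$ for $\phi\in C$; since $T_{g\phi}=gT_\phi$, only finitely many $g$ can move $\mathcal T$ into itself. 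Your minimal-curve argument could perhaps be made to work, but it is not what is done and would require its own stability lemma.

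Finally, for strictness of $\X_{\mathrm{ps}}\subsetneq\X_Q(N)$ the paper's example is simpler than what you propose: take the holonomy of a complete finite-area hyperbolic structure on $N$ with all three boundary components cusped. This lies in $\X_Q(N)$ (it is discrete and faithful, and all essential $2$--sided curves are hyperbolic) but is not primitive-stable because the boundary words are parabolic primitives.
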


The set $\X_Q(N)$ can be described as the set of representations $\rho \in \X$ satisfying the following conditions:
\begin{enumerate}[{(BQ1)}]
  \item $\forall \g \in \Sc_2$, $\Tr\rho(\g) \not\in [-2,2]$ ; and
  \item  $ \forall K>0$, $\; \#\{\g\in\Sc_2 \mid |\Tr\rho(\g)| \le K\} < \infty , $
\end{enumerate}
where $\Sc_2$ is the set of free homotopy classes of unoriented $2$--sided simple closed curve in $N$.

In Section \ref{s:BQ} we will use another equivalent definition for $\X_Q(N)$ which is more complicated to state, but is necessary to prove the main theorem. (The proof of the equivalence between the two definitions is contained in Section \ref{s:characterization}). This equivalent definition is also useful if one wants to write a computer program which draws slices of the domain of discontinuity, because it uses a unique constant $K$ for condition $(BQ2)$. In addition, in the same section we will also prove that the set $\X_Q(N)$ can be equivalently defined in terms of the growth of the elements of $F_3$ corresponding to ($2$--sided) simple closed curves in $N$. In oder to study these asymptotic growth, we use the {\em Fibonacci} function, a `reference' function which can be defined recursively starting from the values around an edge, and which happens to be related to the word length of the elements of $\Gamma$ representing these simple closed curves, see Proposition \ref{prop:word}. This will have as a corollary, the fact that $\X_Q(N)$ contains the set $\X_{\mathrm{ps}}$ of primitive-stable representations, see Proposition \ref{pr:primitivestable}.

The strategy to prove Theorem \ref{Main} consists mainly in a careful analysis using trace functions on simple closed curves in $N$, as previously done in Bowditch \cite{bow_mar}, Tan--Wong--Zhang \cite{tan_gen} and Maloni--Palesi--Tan \cite{mal_ont}. The main idea is to define a combinatorial graph $\Upsilon$, the dual of the complex of curves of $N$, and define for any representation $\rho\in \X$, an orientation on the $1$--skeleton of $\Upsilon$, and prove the existence of an attracting subtree. Then, we prove that this attracting subtree is finite if and only if $\rho \in \X_Q(N)$, and we use that to show that $\X_Q(N)$ is open and the action of $\MCG(N)$ on it is properly discontinuous.

\vskip 5pt

After setting up the notation and the required background that we need in Section \ref{s:not},  we describe the combinatorial view point we adopt in Section \ref{s:analysis}, which uses trace functions on simple closed curves, and we describe the construction of the attracting subtree. We conclude in Section \ref{s:fibonacci} after understanding the asymptotic growth of the length of simple closed curves. In the same Section we also give different characterizations of the set $\X_Q(N)$. In Appendix \ref{app:functionH} we will prove an explicit formula related to the asymptotic growth of the representations along two sided simple closed curves, while in Appendix \ref{app:Torelli} we will prove that the Torelli subgroup $\Tc_n$ of $\mathrm{Out}(F_3)$ is an index two subgroup of the group generated by the seven involutions $\theta_a, \theta_b, \theta_c, \theta_d, \theta_x, \theta_y, \theta_z$.

\vskip 5pt

\noindent {\bf Acknowledgements.} The authors acknowledge support from U.S. National Science Foundation grants DMS 1107452, 1107263, 1107367 RNMS: ``Geometric Structures and Representation Varieties'' (the GEAR Network). The second author was partially supported by the European Research Council under the European Community's seventh Framework Programme (FP7/2007-2013)/ERC grant agreement n\textdegree\;\; FP7-246918, and by ANR VALET (ANR-13-JS01-0010) and the work has been carried out in the framework of the Labex Archimede (ANR-11-LABX-0033) and of the A*MIDEX project (ANR-11-IDEX-0001-02).

This material is based upon work supported by the National Science Foundation under Grant No. 0932078 000 while the authors were in residence at the Mathematical Sciences Research Institute in Berkeley, California, during the Spring 2015 semester. The authors are grateful to the organizers of the program for the invitations to participate, and to the MSRI and its staff for their hospitality and generous support.

\section{Notations}\label{s:not}

In this section we fix the notations which we will use in the rest of the paper and give some important definitions. Since in this article we will study representation $\rho\co F_3\to \SLtwoC$ from the free group on three generators $F_3$ into $\SLtwoC$, in the same way as in the article of Maloni, Palesi, Tan \cite{mal_ont}, we will follow the notation and structure of that paper. When possible, we will try to simplify the arguments, stating more clearly the relations between the different results and the idea behind the proofs. Note that this work, as well as our previous work \cite{mal_ont}, are influenced by Bowditch's results \cite{bow_mar}, which were generalized by Tan, Wong and Zhang \cite{tan_gen}. Note also that Huang--Norbury \cite{hua_sim} studied the particular case of the three-holed projective plane where all the boundary components are punctures, which make equation \eqref{vertex} symmetric and much simpler. Their results go in a different direction than ours: we are interested in dynamical questions, while Huang and Norbury are more interested in some topological questions, as the study of a McShane's identities, or of systoles of $N$.

\subsection{The three-holed projective plane $N$ and its fundamental group $\Gamma$}

Any (compact) non-orientable surface $N_{g,b}$ is characterized (topologically) by the number $g$ of cross-caps and the number $b$ of boundary components.

Let $N = N_{1, 3}$ be a (topological) three-holed (real) projective plane, namely, a projective plane with three disjoint open disks removed, and let $\Gamma$ be its fundamental group. The group $\Gamma$ is isomorphic to the free group on three generators $\Z \ast \Z \ast \Z$ and admits the following presentation
$$\Gamma = \langle \a, \b, \g, \d \mid \a\b\g\d \rangle,$$
where $\a$, $\b$, $\g$, and $\d$ are the loops described in Figure \ref{fig:n13}.

Note that with these generators, the homotopy classes of the three boundary components, one for each removed disk, correspond to the elements $\a\b$, $\b\g$, and $\a\g$.

We define an equivalence relation $\sim$ on $\Gamma$ by: $g \sim h$ if and only if $g$ is conjugate to $h$ or $h^{-1}$. Then $\Gamma/\!\sim$ can be identified with the set of free homotopy classes of unoriented closed curves on $N$.

\subsection{Simple closed curves on $N$}\label{ss:simple}

Let $\Sc = \Sc(N)$ be the set of free homotopy classes of {\em essential} simple closed curves on $N$. Recall that a curve is \emph{essential} if it does not bound a disc, an annulus or a M\"obius strip. We will omit the word essential from now on. We can then identify $\Sc$ to a well-defined subset of $\Gamma/\!\sim$.  

Simple closed curves in a non-orientable surface are of two types: a simple closed curve is said to be $1$--{\em sided} if its tubular neighborhood is homeomorphic to a M\"obius strip, and $2$--{\em sided} if the neighborhood is homeomorphic to an annulus. The $4$ curves $\a$, $\b$, $\g$, and $\d$ described above are all $1$--sided. Let $\Sc_i$, where $i = 1, 2$, be the subset of $\Sc$ corresponding to $i$--sided simple closed curves. We recall that Dehn twists can only be defined along $2$--sided simple closed curves.

\begin{Remark}\label{2sided}
  Since it will be important later, we notice that in $N = N_{1,3}$, there is a $1$--to--$1$ correspondence between:
  \begin{itemize}
  	\item (unordered) pairs $(\a, \b)$ of (free homotopy classes of) $1$--sided simple closed curves intersecting exactly once; and 
	\item (free homotopy classes of) $2$--sided simple closed curves $\xi_{\a, \b}$.
\end{itemize} 
We will say that $\xi_{\a, \b}$ is {\em associated} with the pair $(\a,\b)$.
\end{Remark}
\begin{proof}
  In fact, the $R$--neighborhood of any pair $(\a, \b)$ of $1$--sided simple closed curves intersecting once corresponds to an embedded two holed projective plane M. One of the boundary of $M$ is homotopic to a boundary component of $N$, and we denote by $\xi_{\a, \b}$ the other boundary curve, which is an essential $2$--sided curve in $N$, and which corresponds to the element $\a\b^{-1}$. Reciprocally, any $2$--{\em sided} essential simple closed curve on $N$ splits the surface into a pair of pants and a two-holed projective plane $M$, and there are exactly two $1$--{\em sided} curves in $M$. 
\end{proof}
  
\subsection{Relative character variety $\mathfrak{X}_{\Uom}(N)$}

The character variety $\mathfrak{X} = \mathfrak{X}(\Gamma, \SLtwoC)$ is the space of equivalence classes of representations $\rho \co \Gamma \rightarrow \SLtwoC$, where the equivalence classes are obtained by taking the closure of the orbit under the conjugation action by $\SLtwoC$. As mentioned in the Introduction, a classical result on the character varieties (see, for example, (9) in p. 298 of Fricke and Klein \cite{fri_vor}) states that the map

\begin{align*}
	f : \mathfrak{X} & \longrightarrow \, \, \, \C^7 \\
		[ \rho ] & \longmapsto 
		\begin{pmatrix} a \\ b \\ c \\ d \\ x \\ y \\ z \end{pmatrix} = 
		\begin{pmatrix} \Tr( \rho (\a)) \\ \Tr( \rho (\b))\\ \Tr( \rho (\g))\\ \Tr( \rho (\d))\\ \Tr( \rho (\a\b))\\ \Tr( \rho (\b\g))\\ \Tr( \rho (\a\g)) 				\end{pmatrix}
\end{align*}
 provides an identification of  $\mathfrak{X}$ with the set
    \begin{equation} 
        \left\{(a,b,c,d, x,y,z) \in \C^7 \mid \mbox{equation } (1) \mbox{ holds} \right\}.
    \end{equation}

Let $\Uom = (x,y,z) \in \C^3$. A representation $\rho\co \Gamma \to \SLtwoC$ is said to be a $\Uom$--{\it representation}, or $\Uom$--{\it character}, if, for some fixed generators $\a, \b, \g \in \Gamma$, 
we have $$\begin{aligned}
   \Tr \rho(\a\b) = x,\\ 
   \Tr \rho(\b\g) = y,\\ 
   \Tr \rho(\a\g) = z.
\end{aligned}$$ 
The space of equivalence classes of $\Uom$-representations is denoted by $\mathfrak{X}_{\Uom}$ and is called the $\Uom$--\emph{relative character variety}. These representations correspond to representations of the three-holed projective plane where we fix the conjugacy classes of the three boundary components in the space of closed orbits. The previous map gives an identification  of $\mathfrak{X}_{\Uom}$ with the set 
$$\left\{(a,b,c,d) \in \C^4 \mid  \mbox{equation } (1) \mbox{ holds}  \right\}. $$
    
\subsection{The mapping class group $\MCG(N)$}
   
The pure mapping class group $\MCG = \MCG(N) :=\pi_0({\rm Homeo}(N))$ of $N$ is the subset of the group of isotopy-classes of homeomorphisms of $N$ fixing the boundary components pointwise. Huang and Norbury gave a complete description of this group in \cite[Section 2.7]{hua_sim} and in particular, they show that $\MCG \cong F \rtimes \mathrm{Stab}(\Delta)$, where $F$ is the group generated by the four involutions $\theta_a, \ldots, \theta_d$ defined in the Introduction, and $\mathrm{Stab}(\Delta)\cong \mathbb{Z}_2 \times \mathbb{Z}_2$. Since $F$ is a finite index subgroup of $\MCG (N)$, we will study its action on the character varieties $\X$ and $\X_{\Uom}$, as it is much simpler to describe.

\begin{Remark}\label{Torelli}
	The group generated by the seven involutions $\theta_a, \theta_b, \theta_c, \theta_d, \theta_x, \theta_y, \theta_z$ has a remarkable interpretation: it is a $\Z_2$ extension of the Torelli group $\Tc_3$ of the free group of rank three. Since this fact is of independent interest, we prove it in Appendix \ref{app:Torelli}.
\end{Remark}

\subsection{The binary tree $\Upsilon$}

In \cite{mal_ont} we considered the countably infinite simplicial tree $\Sigma$ properly embedded in the plane all of whose vertices have degree 3 in order to define $\mu$--Markoff triples and $\mu$--Markoff maps. The graph $\Sigma$ is the simplicial dual to the Farey graph, which coincides with the complex of curves for the four-holed sphere $S$. We define here the analog for the three-holed projective plane case.

Let $\mathcal{CC}(N)$ be the complex of curves of $N$, which is the $3$--dimensional abstract simplicial complex, where the $k$--simplices are given by subsets of $k+1$ distinct (homotopy classes of) $1$--sided simple closed curves in $N$ that pairwise intersect once. See Scharlemann \cite{sch_the} for a more detailed discussion on the complex of curves of non-orientable surfaces.

Let $\Upsilon$ be the simplicial dual to $\mathcal{CC}(N)$, as described by Bowditch \cite{bow_mar}. In particular $\Upsilon$ is  a countably infinite simplicial tree properly embedded in the hyperbolic $3$--space all of whose vertices have degree $4$. Let $\Upsilon^{(k)}$ denote the set of $k$--simplices in $\Upsilon$. Let $i(\cdot, \cdot)$ denote the geometric intersection number between two curves, that is the minimal number of intersections in the homotopy classes of the curves. Note that the minimal intersection number between two $1$--sided curves in $N$, is $1$. We have the following sets:
\begin{itemize}
  \item $\Upsilon^{(0)} = \{(\a_1, \a_2, \a_3, \a_4) \mid \a_i \in \Sc_1, i(\a_i,\a_j) = 1 \text{ if } i \neq j\}$;
  \item $\Upsilon^{(1)} = \{(\a_1, \a_2, \a_3) \mid \a_i \in \Sc_1, i(\a_i,\a_j) = 1 \text{ if } i \neq j\}$;
  \item $\Upsilon^{(2)} = \{(\a_1, \a_2) \mid \a_i \in \Sc_1, i(\a_1,\a_2) = 1\}$ which is also  $\{\xi\in\Sc_2\}$;
  \item $\Upsilon^{(3)} = \{\a \in \Sc_1\}$.
\end{itemize}

So each vertex [resp. edge, face, or region] correspond to a quadruple [resp. triple, pair, or singleton] of conjugacy classes of $1$--sided simple closed curves pairwise intersecting minimally. Notice that, thanks to Remark \ref{2sided}, each face in $\Upsilon^{(2)}$ also corresponds to a $2$--sided curve.

\begin{figure}
[hbt] \centering
\includegraphics[height=2 cm]{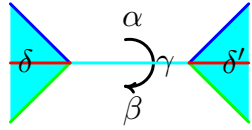}
\caption{The edge $e\leftrightarrow (\alpha, \beta, \gamma;\delta, \delta')$. (The coloring will be explained in the next section.)}
\label{fig:edge}
\end{figure}

We use greek letters $\a, \b, \g, \ldots \a_i,\dots$ to denote the elements of $\Upsilon^{(3)}$, while for the elements in$\Upsilon^{(2)}$ we use $\xi$, $\xi_{\a, \b}, \dots$. For an edge $e \in \Upsilon^{(1)}$, we also use the notation $e\leftrightarrow (\a,\b,\g; \d, \d')$ to indicate that $e=\a \cap \b \cap \g$ and $e\cap \d$ and $e \cap \d'$ are the endpoints of $e$; see Figure \ref{fig:edge}.

\subsection{The coloring of the tree}

We choose a coloring of the regions and edges, namely a map $\mathcal{C} \co \Upsilon^{(3)} \cup \Upsilon^{(1)}  \to \{ 1, 2, 3, 4 \}$ such that for any edge $e\leftrightarrow (a,\b,\g; \d, \d')$ we have $\mathcal{C}(e)=\mathcal{C}(\d) = \mathcal{C}(\d')$ and such that $\mathcal{C} (e)$, $\mathcal{C}(\a)$ , $\mathcal{C}(\b)$, and $\mathcal{C}(\g)$ are all different. The coloring is completely determined by a coloring of the four regions around any specific vertex, and hence is unique up to a permutation of the set $\{1, 2, 3,4\}$. We denote by $\Upsilon^{(3)}_i$ the set of complementary regions with color $i$, and by $\Upsilon^{(1)}_i$ the set of edges with color $i$. In Figure \ref{fig:edge} the edge $e =(\alpha, \beta, \gamma;\delta, \delta') \in \Upsilon^{(1)}_4$ is drawn. (We didn't color the three regions $\alpha, \beta, \gamma$ around $e$.)

In the following, when $\a, \b, \g, \d$ are complementary regions around a vertex, we will use the convention that $\alpha \in \Upsilon^{(3)}_1$, $\beta \in \Upsilon^{(3)}_2$, $\gamma \in \Upsilon^{(3)}_3$, and $\delta \in \Upsilon^{(3)}_4$.

\begin{Remark}\label{bi-colored}
  Note that there is a one-to-one correspondence between faces in $\Upsilon^{(2)}$ and so called bi-colored geodesics in $\Upsilon^{(1)}$, namely maximal subtrees whose edges are of two colors. This set of edge is the boundary of the face.
  
  In the same way, there is a one-to-one correspondence between faces in $\Upsilon^{(3)}$ and tri-colored subtree in $\Upsilon^{(1)}$.
\end{Remark}

\subsection{$\Uom$--Markoff quads}

For a triple $\Uom = (x,y,z) \in \C^3$, a $\Uom$-{\it Markoff quad} is an ordered quadruple $(a_1, a_2, a_3, a_4)$ of complex numbers satisfying the $\Uom$--Markoff equation: 
\begin{equation}\label{eqn:vertex}
	\begin{split}
		a_1^2+a_2^2 +a_3^2+a_4^2+a_1a_2a_3a_4 = & \l_{12}(a_1a_2+a_3a_4)+\l_{23}(a_2a_3+a_1a_4)+\l_{13}(a_1a_3+a_2a_4) \\
			&  +4-\l_{12}^2-\l_{23}^2-\l_{13}^2-\l_{12}\l_{23}\l_{13},
	\end{split}
\end{equation}
or, equivalently,
\begin{equation}
	\sum_{i=1}^4 a_i^2 + \prod_{i=1}^4 a_i =   4  - \l_{12} \l_{23} \l_{13} +  \sum_{i<j = 1}^4 \l_{ij} \left( a_i a_j - \frac{\l_{ij}}{2} \right), 
\end{equation}
where
\begin{equation}\label{lambda}
  \begin{aligned}
     \l_{12}=\l_{34} = x,\\ 
     \l_{23} = \l_{14} = y,\\ 
     \l_{13} = \l_{24} = z.
  \end{aligned}
\end{equation}

It is easily verified that, if $(a_1, a_2, a_3, a_4)$ is a $\Uom$--Markoff quad, then the quad obtained by replacing $a_i$ by $a_i'$ is also a $\Uom$--Markoff quad, where
\begin{equation}\label{eqn:elemoper}
a_i' = \sum_{j \neq i} \l_{ij} a_j - \prod_{j\neq i} a_j - a_i.
\end{equation}

Since equation \eqref{vertex} is not symmetric in the variables $a_1, \ldots, a_4$, then the permutations quads are not $\Uom$--Markoff quads. This is different from the situation analyzed in Huang--Norbury \cite{hua_sim}, where the equation defining the case of parabolic boundary components is symmetric, and much simpler to manipulate.

\subsection{Relation with $\Uom$-representations}

We start by the following basic remark

\begin{Remark}\label{correspondence}
  A representation $\rho$ is in $\mathfrak{X}_{\Uom}$, where $\Uom \in \C^3$, if and only if there exists a set of generators $\a, \b, \g$ for $F_3$ such that $(\Tr(\rho(\a)), \Tr(\rho(\b)), \Tr(\rho(\g)), \Tr(\rho(\a\b\g)))$ is a $\Uom$--Markoff quad, with $\Uom = (\Tr (\rho (\a\b)), \Tr (\rho (\b\g)), \Tr (\rho (\a\g)))$.
\end{Remark}

Using this correspondence, we can now see that the elementary operations defined in \eqref{eqn:elemoper} are intimately related with the action of the mapping class group on the character variety, see Equation (\ref{eqn:theta_a}).

\subsection{$\Uom$--Markoff maps}

A $\Uom$-{\it Markoff map} is a function $\psi \co \Upsilon^{(3)} \cup \Upsilon^{(2)} \to \C$ such that the following properties hold:
\begin{itemize}
\item[(i)] \underline{{\em Vertex Equation}}: $\forall v = (\a_1, \ldots, \a_4) \in \Upsilon^{(0)}$ (i.e the $\a_i \in \Upsilon^{(3)}_i$ are the four regions meeting the vertex $v$), the quad $(\psi(\a_1), \psi(\a_2), \psi(\a_3), \psi(\a_4))$ is a $\Uom$--Markoff quad;
\item[(ii)] \underline{{\em Edge Equation}}: $\forall e = (\a_i, \a_j, \a_k; \a_l, \a_l') \in \Upsilon^{(1)}_l$,  $$\psi(\a_l)+\psi(\a_l') = \l_{il}\psi(\a_i) + \l_{jl}\psi(\a_j) + \l_{kl}\psi(\a_k) - \psi(a_i) \psi(a_j) \psi(a_k);$$
\item[(iii)] \underline{{\em Face Equation}}: $\forall f = (\a_i, \a_j) \in \Upsilon^{(2)}$, $$\psi(\a_i)\psi(\a_j) = \l_{ij} + \psi(\xi_{\a_i, \a_j}).$$
\end{itemize}
where $\l_{ij}$ are defined in Equation \ref{lambda}.

We shall use ${\bf \Psi}_{\Uom}$ to denote the set of all $\Uom$--Markoff maps and lower case letters to denote the $\psi$ values of the regions, that is, $\psi(\a_i)=a_i$.

Note that if the Vertex Equation (i) is satisfied at one vertex, then the Edge Equation (ii) guarantees that the Vertex Equation (i) is in fact satisfied at every vertex. This is related to the fact that the Edge Equations, arise from the action of the (pure) mapping class group $\MCG(N)$ of the three-holed projective plane $N$, which preserves the boundary traces, and hence the relative character variety $\mathfrak{X}_{\Uom}$. This tells us the following:

\begin{Remark}\label{phi_ut}
  There exists a bijective correspondence between $\Uom$--Markoff maps and $\Uom$--Markoff quads. Hence, using Remark \ref{correspondence}, there exists a bijective correspondence between the set ${\bf \Psi}_{\Uom}$ of $\Uom$--Markoff maps and the $\Uom$--relative character variety $\mathfrak{X}_{\Uom}$.
\end{Remark}

Given a Markoff map $\psi \in {\bf \Psi}_{\Uom}$, we now introduce a {\em secondary function} $\sigma = \sigma_\psi\co \Upsilon^{(2)} \rightarrow \C$ as follows. If $\xi = (\a_i , \a_j) \in \Upsilon^{(2)}$, then 
$$\sigma(\xi) = (a_i^2+a_j^2+\l_{ij}^2-a_i a_j \l_{ij} - 4)(\l_{ik}^2+\l_{jk}^2+\psi(\xi)^2 - \l_{ik}\l_{jk}\psi(\xi) - 4),$$ where $k \neq i$ and $k \neq j$. (Note that any of the two possible choices for $k$ give the same function.) The zeroes of this function are related to representations which, restricted to a certain subsurface, are reducible, as we will   explain in more details in Remark \ref{rk:sigma}.And in   Section \ref{s:face}, we will describe the behavior of the  markoff map $\psi$ when for a certain face $\xi \in \Upsilon^{(2)}$ we have $\sigma(\xi) = 0$. 

\subsection{Orientation on $\Upsilon^{(1)}$} 

As in the previous papers \cite{bow_mar, tan_gen, mal_ont}, a $\Uom$--Markoff map $\psi \in {\bf \Psi}_{\Uom}$ determines an orientation on the $1$--skeleton $\Upsilon^{(1)}$ as follows. Suppose $e = (\a,\b, \g; \d, \d')$. If $|d|>|d'|$, then the arrow on $e$ points towards $\d'$, while if $|d'|>|d|$, then the arrow on $e$ points towards $\d$. If $|d|=|d'|$, then we choose the orientation of $e$ arbitrarily. The choice does not affect the arguments in the latter part of this paper.

A vertex with all four arrows pointing towards it is called a {\em sink}, while one where all the four arrows point away from it is called a {\em source}. When three arrows point towards it and one away is called a {\em merge}, and in all the other cases, namely, a vertex with at least two arrows pointing away from it, is called a {\em (generalized) fork}.

\section{Analysis of $\Uom$-Markoff maps}\label{s:analysis}

In this section we define the so-called BQ-conditions for Markoff maps and analyze the behavior of maps satisfying them. In particular, for any Markoff maps we will define an orientation on the 1-skeleton of $\Upsilon$ and an attracting subtree, which we will use in the following section to prove Theorem \ref{Main}. In fact, the attracting subtree will be finite if and only if the Markoff maps satisfies the BQ-conditions.

\subsection{BQ-conditions for Markoff maps}\label{s:BQ}

Let  $\Uom = (x,y,z) \in \C^3 $ and denote 
\begin{equation} M = M(\Uom) = \max \{ |x| , |y| , |z| \}. \end{equation}
Given $\psi \in {\bf \Psi}_{\Uom}$, and $K > 0$,  we define the subsets:
\begin{align*}
	\Upsilon_\psi^{(3)} (K) & = \left\{ \alpha \in \Upsilon^{(3)} \, \mid \, | \psi (\alpha) | < K \right\}; \\
	\Upsilon_\psi^{(2)} (K) & = \left\{ \xi_{\a , \b} \in \Upsilon^{(2)} \, \mid \, \a \mbox{ or } \b \in \Upsilon_\psi^{(3)} (K), \mbox{ and }| \psi (\xi) | < K^2+M \right\}.
\end{align*}

Note that if two adjacent regions $\alpha, \beta$ are in $\Upsilon_\psi^{(3)} (K)$, then the face $\xi_{\alpha, \beta}$ is in $\Upsilon_\psi^{(2)} (K)$, which makes the following lemmas easier to state.

\begin{Definition}[BQ set $({\bf \Psi}_{\Uom})_Q$]

The {\em Bowditch set} $({\bf \Psi}_{\Uom})_Q$ coincides with the set of $\Uom$-Markov maps $\psi \in {\bf \Psi}_{\Uom}$ such that the following conditions hold:
    \begin{enumerate}
    	\item [{(BQ1)}] For all $\xi \in \Upsilon^{(2)}$, we have $\psi (\xi) \notin [-2 , 2]$.
    	\item [{(BQ3)}] For all $\xi \in \Upsilon^{(2)}$, we have $\sigma (\xi) \neq 0$.
	    \item [{(BQ4)}] The set $\Upsilon^{(2)}_{\psi}(2+M)$ is finite.
    \end{enumerate}
\end{Definition}

\begin{Remark}
   By Remark \ref{phi_ut} we have an identification between the set ${\bf \Psi}_{\Uom}$ of $\Uom$--Markoff maps and the $\Uom$--relative character variety $\mathfrak{X}_{\Uom}$. So we can also define the set $(\mathfrak{X}_{\Uom})_Q$ of {\em Bowditch representations}, that will be referred to as the {\em Bowditch set}.
\end{Remark}

\begin{Remark}
	The definition of the BQ-condition for Markoff maps is different from the one given in the Introduction. In particular, condition $(BQ3)$ seems new, and condition $(BQ4)$ is much weaker than condition $(BQ2)$. However, in Proposition \ref{pro:equiv_def} we will see that the two definitions are, in fact, equivalent. 
\end{Remark}

\subsection{Connectedness}

We start with the following key result.

\begin{Lemma}[Fork Lemma]\label{lem:fork}
	Let $\psi$ be a $\Uom$-Markoff map and $v = (\a, \b, \g, \d) \in \Upsilon^{(0)}$. Suppose that two arrows induced by $\psi$ points away from $v$. Then at least one of the six faces $\xi_{\a,\b}, \xi_{\a,\g}, \xi_{\a,\d}, \xi_{\b,\g}, \xi_{\b,\d}, \xi_{\g,\d}$ passing through $v$ is in $\Upsilon_\psi^{(2)} \left( 2+M \right)$.
\end{Lemma}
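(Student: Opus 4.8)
The plan is to argue by contradiction: assume that all six faces through $v$ fail to belong to $\Upsilon_\psi^{(2)}(2+M)$, and derive a contradiction with the hypothesis that two of the four arrows at $v$ point away from $v$. By the convention on coloring we may write $v=(\a,\b,\g,\d)$ with $\a\in\Upsilon^{(3)}_1$, etc., and let $a=\psi(\a),b=\psi(\b),c=\psi(\g),d=\psi(\d)$; the quad $(a,b,c,d)$ is a $\Uom$--Markoff quad. Say the two arrows pointing away from $v$ are those along the edges in regions $\g$ and $\d$ (the other cases being symmetric up to relabeling); pointing away means that at the neighbour vertex across that edge the relevant coordinate gets \emph{larger} in modulus. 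Concretely, if $e_\d=(\a,\b,\g;\d,\d')$ is the edge in region $\d$, then the arrow points away from $v$ towards $\d'$ precisely when $|d'|>|d|$, where by the Edge Equation $d+d'=\l_{14}b+\l_{24}a+\l_{34}c-abc$, i.e. $d'=\l_{14}b+\l_{24}a+\l_{34}c-abc-d$; similarly for the edge in region $\g$.

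Next I would extract from "two arrows point away" two lower bounds: $|d'|>|d|$ and $|c'|>|c|$ force, via $|d|<\tfrac12|d+d'|$ and $|c|<\tfrac12|c+c'|$ together with the Edge Equations, inequalities of the shape $|d|$ and $|c|$ being controlled from \emph{below} by something like half of $|abc|$ minus a linear error of size $O(M\cdot\max(|a|,|b|,|c|))$. The key point to exploit is the Face Equation $\psi(\a)\psi(\b)=\l_{12}+\psi(\xi_{\a,\b})$, so that $|\psi(\xi_{\a,\b})|<2+M$ iff $|ab|<2+2M$ roughly (up to the precise constant $M$), and the definition of $\Upsilon_\psi^{(2)}(2+M)$: a face $\xi_{\a,\b}$ lies in it iff one of $\a,\b$ lies in $\Upsilon_\psi^{(3)}(2+M)$ \emph{and} $|\psi(\xi_{\a,\b})|<(2+M)^2+M$. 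So assuming all six faces are outside $\Upsilon_\psi^{(2)}(2+M)$ gives, for each pair, the alternative "$|\psi(\a)|,|\psi(\b)|\ge 2+M$" or "$|\psi(\xi_{\a,\b})|\ge (2+M)^2+M$"; via the Face Equation the latter also yields $|ab|\ge (2+M)^2$ or so. The plan is to show that in either branch, all of $|a|,|b|,|c|,|d|$ are bounded below by $2+M$, and then combine with the two "arrow away" inequalities: the lower bounds on $|c|,|d|$ conflict with the fact that the arrows away force $|c|,|d|$ to be, conversely, \emph{small} relative to $|abc\cdot(\text{stuff})|$ — this is the standard Bowditch-type estimate that a fork vertex cannot have all neighbouring face-values large.

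More precisely, the clean way to run the last step is via the (generalized) trace/length inequalities already present in this circle of ideas: when $|d'|>|d|$ one gets $2|d|\le |d+d'|=|\l_{14}b+\l_{24}a+\l_{34}c-abc|\le |abc|+M(|a|+|b|+|c|)$, hence $|d|\le \tfrac12|abc|+\tfrac{M}{2}(|a|+|b|+|c|)$, and symmetrically $|c|\le \tfrac12|abd|+\tfrac{M}{2}(|a|+|b|+|d|)$. If simultaneously $|a|,|b|,|c|,|d|\ge 2+M$ (the conclusion we are trying to force from the assumption that the six faces are all outside the set), one divides through and checks that these two inequalities become incompatible — each says a quantity that is roughly $\ge \tfrac12|ab|\cdot(2+M)$ is $\le \tfrac12|ab|\cdot(\text{that same}) + \text{lower order}$, giving $|ab|\lesssim$ constant, hence $|a|$ or $|b|$ small, contradicting $|a|,|b|\ge 2+M$. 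The main obstacle I anticipate is bookkeeping the exact constants: one must verify that the specific threshold $K=2+M$ (and the companion threshold $K^2+M$ in the definition of $\Upsilon_\psi^{(2)}(K)$) is exactly what makes the arithmetic close — i.e. that $2+M$ is large enough that $(2+M)^2$ dominates the linear error terms $M(|a|+|b|+|c|)$ when the other factors are themselves $\ge 2+M$. I would isolate this as a short computational sublemma (an inequality of the form: if $t\ge 2+M$ and $u\ge 2+M$ then $\tfrac12 tu + \tfrac{3}{2}M\max(t,u) < tu\cdot$(something), etc.), prove it once, and feed it into the case analysis over which two of the four arrows point away from $v$; by the symmetry of the vertex equation under the color-preserving symmetries $(a,b,c,d)\mapsto$ its pairings, only a couple of genuinely distinct cases remain.
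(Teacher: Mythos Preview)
Your overall strategy---contradiction, use the edge relations at the two outgoing edges, force one region to have small value, then upgrade to a face---matches the paper's. But the execution has a genuine error that breaks the argument.

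\textbf{The orientation is reversed.} By the paper's convention, on an edge $e=(\a,\b,\g;\d,\d')$ the arrow points towards $\d'$ when $|d|>|d'|$. So if the arrow at $v=(\a,\b,\g,\d)$ points \emph{away} from $v$ (towards $\d'$), you get $|d|>|d'|$, not $|d'|>|d|$. Consequently the usable inequality is
\[
2|d|\ \ge\ |d+d'|\ =\ |{-abc}+xc+ya+zb|\ \ge\ |ab-x|\,|c|-(|ya|+|zb|),
\]
and similarly $2|c|\ge |ab-x|\,|d|-(|yb|+|za|)$. Your claimed implication $|d'|>|d|\Rightarrow 2|d|\le |d+d'|$ is false in general (e.g.\ $d=1$, $d'=-1.5$), and even if it held, the resulting \emph{upper} bounds $|d|\le\tfrac12|abc|+\tfrac{M}{2}(|a|+|b|+|c|)$ and the symmetric one for $|c|$ are perfectly compatible with all four moduli being $\ge 2+M$ (try $|a|=|b|=|c|=|d|$ large). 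There is no contradiction along that route.

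With the correct inequalities, adding gives $2(|c|+|d|)\ge |ab-x|(|c|+|d|)-(|a|+|b|)(|y|+|z|)$, and assuming $|a|,|b|,|c|,|d|\ge 2+M$ one bounds $|ab-x|\le 2+M$, hence $|ab|\le 2+2M$ and $\min(|a|,|b|)\le\sqrt{2+2M}<2+M$, the desired contradiction. This is the step you were reaching for but did not set up correctly.

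\textbf{The second half is underspecified.} Once you know one region, say $\a$, lies in $\Upsilon^{(3)}_\psi(2+M)$, you still need a \emph{face} in $\Upsilon^{(2)}_\psi(2+M)$, i.e.\ a pair with one small region \emph{and} $|\psi(\xi)|<(2+M)^2+M$. In the paper this is a genuine case analysis (the roles of the two ``outgoing'' regions versus the other two are not symmetric), and the case where $|a|<2+M$ but the three face values $|ab-x|,|ac-z|,|ad-y|$ are a priori large requires a separate estimate using both edge relations again to force $|ab-x|<(2+M)^2+M$. Your proposal gestures at this but does not supply it; it is the more delicate half of the proof.
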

\begin{proof}
Suppose, without loss of generality, that the outgoing arrows are in the direction of $\g$ and $\d$. The edge relations give
\begin{align*}
	 c + c' = (x-ab)d +yb+za \\
	 d+d' = (x-ab)c + ya + zb,
\end{align*}
and the directions of arrows give
	$$|c| > |c'| \hspace{0.5cm} \mbox{and} \hspace{0.5cm} |d| > |d'|.$$
	From these, we get the two inequalities
	\begin{align*}
		 2|c| \geq &  |ab-x| |d| - (|yb| + |za|), \\
		 2|d| \geq &  |ab-x| |c| - (|ya| + |zb|),
	\end{align*}
and adding both inequalities, we get:
	$$2(|c|+|d|) \geq |ab-x|(|c|+|d|)-(|a|+|b|)(|y|+|z|).$$

\vskip 4pt
First, let's prove that one of the region $\a, \b , \g , \d$ is in $\Upsilon_\psi^{(3)} (2+M)$.
	
	By contradiction, assume that $|a|, |b|, |c| , |d| \geq 2+M$. Then we have that $|ab-x| \geq |a|+|b|$, and also that $|c|+|d|-|y|-|z|\geq 4$. So 
	\begin{align*} 
		2(|c|+|d|) & \geq |ab-x|(|c|+|d|-|y|-|z|)\\
		|ab-x| & \leq \dfrac{2(|c|+|d|)}{|c|+|d|-|y|-|z|} \\
			& \leq 2 + \dfrac{2(|y|+|z|)}{|c|+|d|-|y|-|z|} \\
			& \leq 2 + \frac{4M}{4} \leq 2+M.
	\end{align*}
On the other hand, if $|ab-x| \leq 2+M$, then $|ab| \leq 2+2M$, and hence $$\min\{ |a| , |b| \} \leq \sqrt{2+2M} < 2+M,$$ which gives the contradiction proving the claim.
	
  \vskip 4pt
	
	Now, we proceed to show that one of the face is in $\Upsilon_\psi^{(2)} (2+M)$. 
	
	If two regions among $\a, \b, \g$ and $\d$ are in $\Upsilon_\psi^{(3)} (2+M)$ then the face at the intersection of these two regions is directly in $\Upsilon_\psi^{(2)} (2+M)$. For example, if $|a|, |d| < 2+M$, then 
$$|ad-y| < |ad|+|z| < (2+M)(2+M) + M < (2+M)^2 + M.$$
So in this case, $\xi_{\a ,\d} \in \Upsilon_\psi^{(2)} (K)$.
	
	So we can assume that only one region is in $\Upsilon_\psi^{(3)} (2+M)$. As the regions $\a , \b$ and $\g, \d$ play a different role, we have to distinguish two cases.
	
	\underline{\textit{Case 1}:} $\g$ or $\d$ is in $\Upsilon_\psi^{(3)} (2+M)$.
	
	Without loss of generality, consider the case $|d|<2+M$. Assume by contradiction that $|a| \geq |b| \geq |c| \geq 2+ M$. Then the relation $abc = ay+bz+cx+d+d'$ induces the inequality: 
$$(2+M)^2 |a| \leq |abc| \leq |ya|+|bz|+|cx| + 2|d| < 3M|a|+4+2M.$$
Hence we get $(4+M+M^2)|a|  < 4+2M$, and so $|a|  < \frac{1}{2} (M+2)$. This gives a contradiction. So at least one of $|a|, |b|$ or $|c|$ is less than $2+M$.

\underline{\textit{Case 2}:} $\a$ or $\b$ is in $\Upsilon_\psi^{(3)} (2+M)$.

Without loss of generality, consider the case $|a|<2+M$. Assume that $\xi_{\a , \g}$ and $\xi_{\a , \d}$ are not in $\Upsilon_\psi^{(2)} (2+M)$, so that $|ac-z| \geq (2+M)^2+M$ and $|ad-y| \geq (2+M)^2+M$. We will prove that, in this case, $\xi_{\a, \b}$ is in $\Upsilon_\psi^{(2)} (2+M)$.  

As $|c|>|c'|$, and $|d| > |d'|$, we have 
\begin{align*}
|ac'-z| < |ac-z|+2M \\
|ad'-y| < |ad-y| + 2M.
\end{align*}
because $|ac'-z| < |ac'|+ |z| < |ac| + |z| < |ac'-z| + 2|z|.$
Hence we have 
	\begin{align*}
		2|ac-z| & > |ac-z| + |ac' - z| - 2M \\
			&> |a(c+c')-2z | - 2M \\
			&> |a(-abd+xd+yb+za) - 2z | - 2M \\
			&> |-(ab-x)(ad-y) + (a^2-2)z + xy | - 2M \\
			&> |ab-x| \, |ad-y| - |(a^2-2)z + xy| - 2M \\
			&> |ab-x| \, |ad-y| - (M^3+5M^2+8M).
	\end{align*}
	Similarly we have $2|ad-y|> |ab-x| \, |ac-z| - (M^3+5M^2+8M)$. 
	
	Adding the two inequalities, we obtain
	$$|ab-x| < 2 + \dfrac{2(M^3+5M^2+8M)}{|ac-z| + |ad-y|}.$$
	As $|ac-z| + |ad-y|> 2 ((2+M)^2 + M)$, we get 
	$$|ab-x| < 2+\dfrac{2 (M ((2+M)^2+M) + 4M)}{2 ((2+M)^2 + M)} < (2+M)^2+M.$$
	So $\xi_{\a , \b} \in \Upsilon_\psi^{(2)} (2+M)$, as desired.
\end{proof}

\begin{Lemma}\label{lem:conn_3}
   The set $\Upsilon_\psi^{(3)} (K)$ is connected, for all $K \geq 2+M$.
\end{Lemma}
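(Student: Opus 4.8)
The plan is to show that any two regions $\alpha, \beta \in \Upsilon_\psi^{(3)}(K)$ can be joined by a path in $\Upsilon^{(3)}$ all of whose regions lie in $\Upsilon_\psi^{(3)}(K)$. The natural strategy, following Bowditch \cite{bow_mar} and \cite{mal_ont}, is to argue by contradiction using the orientation on $\Upsilon^{(1)}$: if $\Upsilon_\psi^{(3)}(K)$ were disconnected, one could find a region $\alpha$ with $|\psi(\alpha)| < K$ and an adjacent region $\beta$ with $|\psi(\beta)| \geq K$ separating $\alpha$ from some other small region; tracking how arrows flow across the edges of $\Upsilon$ separating the components should yield a contradiction with the edge relations. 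Concretely, I would first translate connectedness of $\Upsilon_\psi^{(3)}(K)$ into a statement about the dual picture: two regions are adjacent exactly when they share an edge of $\Upsilon^{(1)}$, and the complement of a subtree of regions is controlled by the edges on its frontier.

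The key computational input is the following local estimate, analogous to the second half of the Fork Lemma: if a vertex $v = (\alpha,\beta,\gamma,\delta)$ has, say, $\gamma$ and $\delta$ both with $|\psi| \geq K \geq 2+M$, then the arrows on the two edges of $v$ that separate $\{\gamma\}$ from $\{\alpha,\beta\}$-type configurations both point \emph{inward} toward $v$ along the $\gamma$- and $\delta$-colored edges — i.e.\ large regions ``attract'' along their boundary. More precisely, I would prove: \emph{if $e = (\alpha,\beta,\gamma;\delta,\delta')$ with $|\psi(\delta)| \geq 2+M$, then $|\psi(\delta)| > |\psi(\delta')|$ provided at least one of $|\psi(\alpha)|, |\psi(\beta)|, |\psi(\gamma)|$ is also $\geq 2+M$.} This follows from the edge equation $\psi(\delta)+\psi(\delta') = \lambda_{i\ell}a_i + \lambda_{j\ell}a_j + \lambda_{k\ell}a_k - a_i a_j a_k$ by bounding the right-hand side: when the product term $|a_i a_j a_k|$ dominates (which happens once two of the three are $\geq 2+M$, using $M = \max|x|,|y|,|z|$) one gets $|\psi(\delta)+\psi(\delta')| \leq |\psi(\delta)|\cdot(\text{something})$ forcing $|\psi(\delta')| < |\psi(\delta)|$; the edge case where only one of $a_i,a_j,a_k$ is large is handled exactly as in ``Case 1'' and ``Case 2'' of the Fork Lemma, now with the hypothesis $K \geq 2+M$ ensuring the quadratic inequalities close up.

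Granting this, the argument runs as follows. Suppose $\Upsilon_\psi^{(3)}(K)$ is disconnected, and let $U$ be one of its connected components, $\alpha_0 \in U$. Since $\Upsilon^{(3)}$ (the set of regions, with adjacency) is dual to a tree, the set of regions \emph{not} in $U$ that are adjacent to $U$ is nonempty; pick such a $\beta$, so $|\psi(\beta)| \geq K$. The region $\beta$ is adjacent to $U$ along a bi-colored geodesic (Remark \ref{bi-colored}), i.e.\ along the boundary of the face $\xi = \xi_{\alpha_0,\beta}$ — here I use that adjacency of regions corresponds to sharing a face. Now I would walk along the edges of this frontier: at each vertex $v$ on the boundary between $\beta$ and the small regions, apply the local estimate above to conclude the arrows point consistently, and then run the usual ``escaping to infinity'' argument — the $\psi$-values of regions adjacent to $\beta$ along this geodesic must grow without bound, but on the other side we are inside $\Upsilon_\psi^{(3)}(K)$, contradicting either the finiteness coming from the edge equations or directly producing two adjacent regions one large and one small whose face violates (via the Face Equation $\psi(\alpha_i)\psi(\alpha_j) = \lambda_{ij} + \psi(\xi)$) the arrow-flow consistency.

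The main obstacle I anticipate is that Equation \eqref{vertex} is \emph{not} symmetric in $a_1,\dots,a_4$ and the roles of the pairs $\{\alpha,\beta\}$ and $\{\gamma,\delta\}$ genuinely differ (this is exactly why the Fork Lemma needed separate Cases 1 and 2). So the local ``large regions attract'' estimate will itself require a case split according to the colors of the two large regions relative to the edge, and one must be careful that the threshold $2+M$ — rather than something larger — still suffices in every case. Getting the constants to line up uniformly, and checking that the bi-colored-geodesic frontier of a component is genuinely a single geodesic (so the walk is well-defined), are the delicate points; the rest is a routine adaptation of the connectedness arguments in \cite{bow_mar, tan_gen, mal_ont}.
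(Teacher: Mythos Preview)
Your proposal has a genuine gap: the ``key computational input'' you isolate is false. You claim that if $e=(\alpha,\beta,\gamma;\delta,\delta')$ with $|\psi(\delta)|\ge 2+M$ and \emph{at least one} of $|\psi(\alpha)|,|\psi(\beta)|,|\psi(\gamma)|$ is also $\ge 2+M$, then $|\psi(\delta)|>|\psi(\delta')|$. Take $\Uom=(0,0,0)$ (so $M=0$), $a=100$, $b=c=0.1$; the edge relation gives $d+d'=-abc=-1$, so if $d=100$ then $d'=-101$ and $|\psi(\delta')|>|\psi(\delta)|$. Having one or even two of the three surrounding values large does not force the product term to dominate; you need \emph{all three} of $a,b,c$ to be $\ge K$ for the edge equation to give a useful bound, and that is exactly the situation the paper exploits. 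Your subsequent ``walk along the frontier'' argument is built on this estimate and is also too vague as stated (the frontier of a component of regions is not a single bi-colored geodesic in general).

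The paper's proof is much shorter and does not attempt any local arrow-direction estimate. It argues by contradiction on the \emph{minimal distance} $m\ge 1$ between two regions $\delta,\delta'\in\Upsilon_\psi^{(3)}(K)$ that cannot be joined inside $\Upsilon_\psi^{(3)}(K)$. If $m=1$, then $\delta,\delta'$ are the two ends of an edge $(\alpha,\beta,\gamma;\delta,\delta')$ with $|a|,|b|,|c|\ge K$ (by minimality none of $\alpha,\beta,\gamma$ can lie in $\Upsilon_\psi^{(3)}(K)$), and one derives contradictory upper and lower bounds for $|abc|$ directly from $d+d'=-abc+ay+bz+cx$ using $K\ge 2+M$. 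If $m>1$, the arrows on the first and last edges of the geodesic from $\delta$ to $\delta'$ point outward (toward the small regions), so some interior vertex is a fork; the intermediate claim inside the Fork Lemma then produces a region in $\Upsilon_\psi^{(3)}(2+M)\subset\Upsilon_\psi^{(3)}(K)$ adjacent to that vertex, contradicting the minimality of $m$. This is the argument you should aim for; note in particular that the $m=1$ case is where all three neighbours are large, which is precisely the hypothesis your local estimate was missing.
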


\begin{proof}
	By contradiction, suppose that $\Upsilon_\psi^{(3)} (K)$ is not connected. Then there exists two regions $\d , \d' \in \Upsilon_\psi^{(3)} (K)$ at distance $m\geq 1$ from each other, such that $\d$ and $\d'$ cannot be connected within $\Upsilon_\psi^{(3)} (K)$, and such that the distance $m$ is minimal.
	
	\underline{\textit{Case 1}:} $m=1$. 
	
	The regions $\d$ and $\d'$ are connected by an edge $(\a,\b,\g)$ so that we have $d+d' = -abc + ay+bz+cx$. By hypothesis, $|a|, |b|, |c| > K$ and hence: 
	
	\begin{align*}
		|abc| &= |K+ (|a|-K)|\,  |K+ (|b|-K)  |\,  |K+(|c|-K) |\\
			&> K^3 + K^2 ((|a|-K)+(|b|-K)+(|c|-K))  \\
			& >K^3+ K^2 (|a|+|b|+|c|-3K).
	\end{align*}

	On the other hand, 
	\begin{align*}
		|abc| & <|ay|+|bz|+|cx|+|d|+|d'| \\
			& < M(|a|+|b|+|c|)+2K \\
			& < 3K^2-4K + (K-2)(|a|+|b|+|c|-3K) \\
			& < K(3K-4) + K ((|a|+|b|+|c|-3K)\\
			& < K^3 + K (|a|+|b|+|c|-3K),
	\end{align*}
	which gives a contradiction. So $m \neq 1$.

	\underline{\textit{Case 2}:} $m>1$.
	
	Consider the sequence of edges $(e_i)_{1\leq i \leq m}$ going from $\d$ to $\d'$. By minimality of $m$, the arrows $e_1$ and $e_m$ point towards the regions $\d$ and $\d'$ respectively. Hence, one of the vertices along the sequence of edges is a fork. So, using the Fork Lemma, one of the regions neighboring the fork is in $\Upsilon_\psi^{(3)} (K)$, and this contradicts the minimality of $m$.
	\end{proof}

\begin{Lemma}\label{conn_2}
	The set $\Upsilon_\psi^{(2)} (K)$ is edge-connected, for all $K \geq 2+M$.
\end{Lemma}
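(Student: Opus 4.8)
The plan is to deduce edge-connectedness of $\Upsilon_\psi^{(2)}(K)$ from the connectedness of $\Upsilon_\psi^{(3)}(K)$ already established in Lemma \ref{lem:conn_3}, using the remark immediately preceding the BQ-definition: if two adjacent regions $\a,\b$ both lie in $\Upsilon_\psi^{(3)}(K)$, then the face $\xi_{\a,\b}$ lies in $\Upsilon_\psi^{(2)}(K)$. So first I would fix $K \ge 2+M$ and take two faces $\xi_{\a_0,\b_0}, \xi_{\a_1,\b_1} \in \Upsilon_\psi^{(2)}(K)$; I want to connect them by a path of faces in $\Upsilon_\psi^{(2)}(K)$ in which consecutive faces share an edge of $\Upsilon$.

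The key observation is that for a face $\xi_{\a,\b} \in \Upsilon_\psi^{(2)}(K)$, by the definition of $\Upsilon_\psi^{(2)}(K)$ at least one of $\a,\b$ is in $\Upsilon_\psi^{(3)}(K)$; say $\a \in \Upsilon_\psi^{(3)}(K)$. Thus every face of $\Upsilon_\psi^{(2)}(K)$ has at least one bounding region in $\Upsilon_\psi^{(3)}(K)$. Now I would use that $\Upsilon_\psi^{(3)}(K)$ is connected (as a subtree of the region-adjacency structure): pick regions $\a_0 \in \Upsilon_\psi^{(3)}(K)$ bounding $\xi_{\a_0,\b_0}$ and $\a_1 \in \Upsilon_\psi^{(3)}(K)$ bounding $\xi_{\a_1,\b_1}$, and join them by a chain of regions $\a_0 = r_0, r_1, \dots, r_n = \a_1$ in $\Upsilon_\psi^{(3)}(K)$ with $r_j, r_{j+1}$ adjacent (intersecting once). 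For each consecutive adjacent pair $r_j, r_{j+1} \in \Upsilon_\psi^{(3)}(K)$, the face $\xi_{r_j, r_{j+1}}$ lies in $\Upsilon_\psi^{(2)}(K)$ by the remark, and consecutive faces $\xi_{r_{j-1},r_j}$ and $\xi_{r_j,r_{j+1}}$ share an edge of $\Upsilon$ (both lie on the boundary of region $r_j$, and in fact meet along the edge $r_{j-1}\cap r_j \cap r_{j+1}$, after possibly interpolating edges along the bi-colored geodesic bounding $r_j$). Finally I would connect $\xi_{\a_0,\b_0}$ to $\xi_{r_0,r_1}$ and $\xi_{\a_1,\b_1}$ to $\xi_{r_{n-1},r_n}$: both pairs of faces share the region $\a_0$ (resp.\ $\a_1$), so they lie on a common bi-colored geodesic (the boundary of that region) and are joined within it by faces also bounded by $\a_0 \in \Upsilon_\psi^{(3)}(K)$, hence in $\Upsilon_\psi^{(2)}(K)$.

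The main obstacle is the last gluing step: two distinct faces sharing a single bounding region need not share an edge of $\Upsilon$, so I must walk along the bi-colored geodesic forming the boundary of that region and check that all intermediate faces still satisfy the $|\psi(\xi)| < K^2 + M$ bound. This needs a short estimate: if $\a \in \Upsilon_\psi^{(3)}(K)$ and $\b$ is any region adjacent to $\a$, then $|\psi(\xi_{\a,\b})| = |a b - \l_{\a\b}| \le |a||b| + M$, which is not automatically $< K^2+M$ unless $|b| < K$ as well — so I actually want to restrict attention to faces of the form $\xi_{\a,\b}$ with \emph{both} endpoints controlled, or argue that along the needed portion of the bi-colored geodesic the regions stay small. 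An alternative, cleaner route avoiding this subtlety: show directly that if $\xi_{\a,\b} \in \Upsilon_\psi^{(2)}(K)$ with $\a \in \Upsilon_\psi^{(3)}(K)$, then $\xi_{\a,\b}$ is edge-adjacent in $\Upsilon_\psi^{(2)}(K)$ to a face $\xi_{\a,\b'}$ with $\b' \in \Upsilon_\psi^{(3)}(K)$, by using the face equation $\psi(\a)\psi(\b') = \l_{\a\b'} + \psi(\xi_{\a,\b'})$ together with the edge equation to locate a neighbour of $\b$ with small trace; then every face reduces to one of the "both regions small" type, and those are edge-connected by the connectedness of $\Upsilon_\psi^{(3)}(K)$ plus the bi-colored geodesic structure of Remark \ref{bi-colored}. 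I would carry out whichever of these two estimates is shorter, but I expect the paper does the latter.
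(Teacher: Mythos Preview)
Your core approach is exactly the paper's: pick for each of the two given faces a bounding region in $\Upsilon_\psi^{(3)}(K)$, join these regions by a chain $\g_0,\dots,\g_n$ inside $\Upsilon_\psi^{(3)}(K)$ using Lemma~\ref{lem:conn_3}, and observe that each intermediate face $\xi_k=(\g_k,\g_{k+1})$ satisfies $|\psi(\xi_k)|\le |\psi(\g_k)||\psi(\g_{k+1})|+M\le K^2+M$, hence lies in $\Upsilon_\psi^{(2)}(K)$. The paper's proof stops right there; it does not carry out the endpoint ``gluing'' step you worry about, nor the alternative reduction you sketch at the end, so your extra care on that point goes beyond what the paper actually writes.
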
	

\begin{proof}
	We need to prove that, if two faces $\xi$ and $\xi'$ are in $\Upsilon_\psi^{(2)} (K)$, then we can find a sequence of edges such that, for each edge, one of the faces touching the edge is in $\Upsilon_\psi^{(2)} (K)$.
	
	If $\xi = (\a , \b)$ is  in $\Upsilon_\psi^{(2)} (K)$, then one of the region $\a$ or $\b$ is in $\Upsilon_\psi^{(3)} (K)$ . Likewise, if $\xi' = (\a' , \b') \in \Upsilon_\psi^{(2)} (K)$, then one of the region $\a'$ or $\b'$ is in $\Upsilon_\psi^{(3)} (K)$. 
	
	As $K\geq 2+M$, the set $\Upsilon_\psi^{(3)} (K)$ is connected. Hence we can find a sequence of regions 
	$$\a = \g_0,  \g_1 , \g_2 , \dots , \g_n = \a'  \mbox{   in   } \Upsilon_\psi^{(3)} (K)$$
	 connecting $\a$ to $\a'$. This gives in turn a sequence of faces $\xi_k = (\g_k , \g_{k+1})$. \\ As $|\psi(\xi_k)| \leq  |\psi(\g_k) \psi(\g_{k+1})|+M \leq K^2+M$, it is clear that $\xi_k$ is in $\Upsilon_\psi^{(2)} (K)$.
\end{proof}

\subsection{Escaping rays}\label{s:neighbor}

Let $P$ be a geodesic arc in the tree $\Upsilon$, starting at vertex $v_0 \in \Upsilon^{(0)}$ and consisting of edges $e_n$ joining $v_n$ to $v_{n+1}$. We say that such an infinite geodesic is an {\em escaping ray} if each edge $e_n$ is directed from $v_n$ towards $v_{n+1}$. First we will consider the case of an escaping ray laying in the boundary of a face $\xi = (\a , \b) \in \Upsilon^{(2)}$, and then we will describe the general case.

\subsubsection{Neighbors of a face}\label{s:face}

Each face $\xi = (\a , \b) \in \Upsilon^{(2)}$ is a bi-infinite path consisting of edges of the form $(\a , \b , \g_n)$ alternating with edges $(\a , \b , \d_n)$, where $(\a, \b , \g_n, \d_n)$ and $(\a, \b , \g_{n+1}, \d_n)$ are vertices in $\Upsilon^{(0)}$. We say that $\g_n$ and $\d_n$ are the {\em neighboring regions} to the face $\xi$. The edge relations on two consecutive edges give:
\begin{align*}
	c_{n+1} & = - (ab-x) d_n  - c_n + (za+yb);  \\
	d_{n+1} & = - (ab-x) c_{n+1} - d_n + (zb+ya)  \\
		& = (ab-x) c_n + ((ab-x)^2-1)d_n + (zb+ya-(ab-x)(za+yb)).
\end{align*}

We can reformulate these equations in terms of matrices:
$$\begin{pmatrix} c_{n+1} \\ d_{n+1} \end{pmatrix} = \begin{pmatrix} -1 & -(ab-x) \\ (ab-x) & (ab-x)^2-1 \end{pmatrix} \cdot \begin{pmatrix} c_{n} \\ d_{n} \end{pmatrix} + \begin{pmatrix} za+yb \\ zb+ya-(ab-x)(za+yb) \end{pmatrix}.$$

Note that the setting is similar to the situation for the four-holed sphere in \cite{mal_ont}, up to a change of variables. See also Appendix \ref{app:functionH}.

Let $\lambda \in \C$ such that $\lambda+\lambda^{-1} = (ab-x)^2 - 2$. It corresponds to the two eigenvalues of the matrix $\begin{pmatrix} -1 & -(ab-x) \\ (ab-x) & (ab-x)^2-1 \end{pmatrix}$. Note that $|\lambda| = 1$ if and only if $(ab-x) \in [-2 , 2]$. Take $\Lambda$ to be a square root of $\lambda$.

If $(ab-x)\notin \{ - 2 , 2 \}$ then we can express the sequence $c_n$ and $d_n$ as:
\begin{align*}
	c_n & = A \Lambda^{2n} + B \Lambda^{-2n} + \eta(ab-x), \\
	d_n & = - (A \Lambda^{2n+1} + B \Lambda^{-2n-1}) + \zeta(ab-x),
\end{align*}
where $\eta$ and $\zeta$ are two complex functions with parameters $a, b$ defined by:

\begin{align*}
	\eta(t) &= \dfrac{1}{4-t^2} \left( 2 (za+yb)-t(zb+ya)\right), \\
	\zeta(t) &= \dfrac{1}{4-t^2} \left( 2 (zb+ya)-t(za+yb)\right).
\end{align*}

Note that $\eta(ab-x)$ and $\zeta (ab-x)$ are the coordinates of the center of the quadric in coordinates $(c,d)$ defined by the vertex relation \eqref{vertex} [with parameters $(a,b,x,y,z)$].

The product $AB$ is given by the following: 
$$AB = \dfrac{1}{(4-(ab-x)^2)^2} (a^2+b^2+x^2-abx-4)((ab-x)^2+y^2+z^2-yz(ab-x)-4 ).$$
Using the secondary function $\sigma$, it can be written more concisely as: 
$$AB = \dfrac{\sigma (\xi)}{4-(\psi(\xi))^2}.$$

From this discussion, we deduce the following result:

\begin{Lemma}\label{lem:neighbors}
	With the notations introduced above, we have
	\begin{enumerate}
		\item If $\psi (\xi) \in (-2 , 2)$, then $|c_n|$ and $|d_n|$ remain bounded.
		\item If $\psi (\xi) \in \{-2 , 2\}$, then $|c_n|$ and $|d_n|$ grow at most quadratically.
		\item If $\psi (\xi) \notin [-2 , 2 ]$, and $\sigma (\xi) \neq 0$, then $|c_n|$ and $|d_n|$ grows exponentially as $n\rightarrow +\infty$ and as $n \rightarrow - \infty$.
		\item If $\psi (\xi) \notin [-2 , 2 ]$, and $\sigma (\xi) = 0$, then $(c_n, d_n)$  converges to $(\eta(ab-x) , \zeta(ab-x))$ when $n \rightarrow +\infty$ or $n \rightarrow -\infty$.
	\end{enumerate}
\end{Lemma}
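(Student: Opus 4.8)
The plan is to analyze the explicit closed-form expressions for $c_n$ and $d_n$ obtained just above the statement, namely
\[
c_n = A\Lambda^{2n} + B\Lambda^{-2n} + \eta(ab-x), \qquad d_n = -(A\Lambda^{2n+1} + B\Lambda^{-2n-1}) + \zeta(ab-x),
\]
together with the identity $AB = \sigma(\xi)/(4 - \psi(\xi)^2)$ and the facts that $\lambda + \lambda^{-1} = (ab-x)^2 - 2$ and $\psi(\xi) = ab - \lambda_{ij} = ab - x$ (by the Face Equation; here $\lambda_{ij}=x$). Since $\eta(ab-x)$ and $\zeta(ab-x)$ are fixed constants once $a,b,x,y,z$ are fixed, everything reduces to understanding the growth of the two-term expression $A\Lambda^{2n} + B\Lambda^{-2n}$ (and its shifted analogue) as $n \to \pm\infty$, which is governed entirely by $|\Lambda|$ and by which of $A, B$ vanish.

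First I would pin down $|\Lambda|$ in each case. Writing $w = \psi(\xi) = ab-x$, we have $\lambda + \lambda^{-1} = w^2 - 2 = (w-2)(w+2) + 2$, and a direct computation shows that $w \in (-2,2)$ forces $w^2 - 2 \in [-2,2)$, hence $|\lambda| = 1$ and $|\Lambda| = 1$; that $w \in \{-2,2\}$ gives $\lambda + \lambda^{-1} = 2$, i.e. $\lambda = 1$ and $\Lambda = \pm 1$ (the degenerate/parabolic case); and that $w \notin [-2,2]$ gives $\lambda + \lambda^{-1} \in \C \setminus [-2,2]$, hence $|\lambda| \neq 1$, so exactly one of $|\Lambda|, |\Lambda|^{-1}$ is $> 1$; say WLOG $|\Lambda| > 1$. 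Then:

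For (1), $|\Lambda| = 1$ makes $|A\Lambda^{2n} + B\Lambda^{-2n}| \le |A| + |B|$ for all $n$, so $|c_n|, |d_n|$ are bounded. For (2), when $\Lambda = \pm 1$ the closed-form derivation degenerates and one instead solves the (now non-diagonalizable, unipotent) linear recursion directly: the matrix is conjugate to a single Jordan block $\left(\begin{smallmatrix} 1 & 1 \\ 0 & 1 \end{smallmatrix}\right)$, whose $n$-th power grows linearly, and the affine inhomogeneous term then contributes an extra factor of $n$, giving at most quadratic growth — I would simply exhibit the solution as $c_n = (\text{quadratic in } n)$. For (3), since $\sigma(\xi) \neq 0$ and $\psi(\xi) \notin [-2,2]$ (so $4 - \psi(\xi)^2 \neq 0$), the product $AB$ is nonzero, hence both $A \neq 0$ and $B \neq 0$; with $|\Lambda| > 1$ the term $A\Lambda^{2n}$ dominates as $n \to +\infty$ (so $|c_n| \sim |A||\Lambda|^{2n} \to \infty$ exponentially), and the term $B\Lambda^{-2n}$ dominates as $n \to -\infty$ (so $|c_n| \sim |B||\Lambda|^{-2n} \to \infty$ exponentially); the same applies to $d_n$ with the shift by $\Lambda^{\pm 1}$, which only changes the multiplicative constant. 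For (4), $\sigma(\xi) = 0$ forces $AB = 0$; since $|\Lambda| > 1$, if $A = 0$ then $c_n \to \eta(ab-x)$ and $d_n \to \zeta(ab-x)$ as $n \to +\infty$ (and if instead $B = 0$, the convergence happens as $n \to -\infty$ — one of the two always occurs). I would note that the case $A = B = 0$ collapses the whole neighboring sequence to the single point $(\eta, \zeta)$, consistent with the statement.

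The only genuinely delicate point is case (2): the closed-form formulas with $\Lambda$ were derived under the hypothesis $(ab-x) \notin \{-2,2\}$, so they cannot be quoted verbatim there, and one must handle the boundary case by a separate direct argument with the Jordan form. This is routine but needs to be written out rather than deduced from the displayed formulas. Everything else is a short dominant-term estimate. (A minor bookkeeping subtlety worth a sentence: "$|c_n|$ grows exponentially as $n \to +\infty$ and as $n \to -\infty$" is to be read as: it tends to infinity exponentially in \emph{both} directions, which is exactly what $A, B$ both nonzero with $|\Lambda| \neq 1$ delivers.)
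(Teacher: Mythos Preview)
Your proposal is correct and follows exactly the route the paper takes: the lemma is presented in the paper as an immediate consequence of the closed-form expressions for $c_n,d_n$ and the identity $AB=\sigma(\xi)/(4-\psi(\xi)^2)$ derived just before it (the paper simply writes ``From this discussion, we deduce the following result''). Your write-up is in fact more careful than the paper's, since you explicitly flag that the degenerate case $\psi(\xi)\in\{-2,2\}$ requires a separate Jordan-block argument rather than the diagonalized formula.
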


\subsubsection{General escaping ray}\label{s:general}

Now we consider the general case.

\begin{Lemma}\label{lem:escaping}
	Suppose that $\{ e_n \}_{n \in \N}$ is an escaping ray. Then:
	\begin{itemize}
		\item either the ray is eventually contained in some face $\xi \in \Upsilon^{(2)}$ such that $\psi(\xi) \in [-2 , 2]$, or $\sigma (\xi) = 0$, 
		\item or the ray meets infinitely many elements $\g \in \Upsilon_{\psi} (2+M)$.
	\end{itemize}
\end{Lemma}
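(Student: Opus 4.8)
The plan is to follow the strategy already used for faces in Lemma~\ref{lem:neighbors} and combine it with the Fork Lemma. Let $\{e_n\}_{n\in\N}$ be an escaping ray with $e_n$ joining $v_n$ to $v_{n+1}$. At each vertex $v_n$ exactly one edge ($e_n$, by hypothesis) points away from $v_n$ along the ray, but there may be other outgoing arrows as well. I distinguish two cases according to whether the ray ``turns'' infinitely often or not.

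First I would show: if the ray is \emph{not} eventually contained in a single face, then it meets infinitely many regions $\g\in\Upsilon_\psi(2+M)$. The point is that every edge $e_n$ lies on the boundary of exactly two faces; saying the ray leaves a face at $v_{n+1}$ means the two faces through $e_{n+1}$ are different from the two through $e_n$, so at $v_{n+1}$ at least two arrows point away (namely $e_n$-direction continued is impossible, so $e_{n+1}$ plus the realisation that the ``straight'' continuations are not taken) --- more carefully, a vertex at which the ray changes face is a vertex with at least two outgoing arrows, i.e.\ a fork in the terminology before Section~\ref{s:analysis}. Applying the Fork Lemma at such a vertex $v$ gives a face through $v$ in $\Upsilon_\psi^{(2)}(2+M)$, and hence (by the definition of $\Upsilon_\psi^{(2)}(K)$) one of the two regions bounding that face lies in $\Upsilon_\psi^{(3)}(2+M)\subseteq\Upsilon_\psi(2+M)$. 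If the ray turns infinitely often, we get infinitely many such regions; one still has to check these regions are genuinely distinct (or at least that infinitely many are hit), which follows because the relevant faces/regions are near vertices $v_n$ with $n\to\infty$ and a region of bounded $\psi$-value can only be adjacent to boundedly many such vertices along an escaping ray --- this is the kind of bookkeeping that needs the tree structure of $\Upsilon$.

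Next I would treat the case where the ray \emph{is} eventually contained in a face $\xi=(\a,\b)\in\Upsilon^{(2)}$. Here I invoke Lemma~\ref{lem:neighbors} directly applied to the sequence $(c_n,d_n)$ of neighboring regions of $\xi$ along the tail of the ray. If $\psi(\xi)\in[-2,2]$ or $\sigma(\xi)=0$ we are in the first alternative of the conclusion and there is nothing more to prove. Otherwise $\psi(\xi)\notin[-2,2]$ and $\sigma(\xi)\neq0$, so by part (3) of Lemma~\ref{lem:neighbors} the values $|c_n|,|d_n|$ grow exponentially as $n\to+\infty$. But the ray being directed means the $\psi$-values of the regions it successively meets (the $\g_n,\d_n$) are eventually ordered by the arrows, and exponential growth at both ends of the bi-infinite face path contradicts the ray being an escaping ray inside that face: an escaping ray in the boundary of a face must travel ``downhill'' in $|\psi|$ in one of the two directions (since at each vertex of the face the outgoing arrow of the ray forces the comparison $|d_n|>|d_{n+1}|$ or similar), which is incompatible with $|c_n|,|d_n|\to\infty$ in that direction. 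Hence this subcase cannot occur, and the first alternative of the conclusion holds.

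The main obstacle I expect is the careful geometric/combinatorial argument in the second paragraph: turning the informal statement ``a vertex where the escaping ray changes face is a fork'' into a precise claim about arrow directions at $v_{n+1}$, and then showing that applying the Fork Lemma at infinitely many such vertices really produces infinitely many (not just repeated) elements of $\Upsilon_\psi(2+M)$. One has to rule out the scenario where the ray oscillates in and out of a bounded cluster of low-$\psi$-value regions while still escaping; this should follow from Lemma~\ref{lem:conn_3} (connectedness of $\Upsilon_\psi^{(3)}(K)$) together with the fact that an escaping ray cannot return to a vertex it has left, but making this rigorous is the delicate part. Everything else is a direct appeal to Lemma~\ref{lem:fork} and Lemma~\ref{lem:neighbors}.
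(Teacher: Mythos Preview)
Your treatment of the case where the ray is eventually contained in a face $\xi$ is essentially correct and matches the paper: an escaping ray forces the neighbouring values $|c_n|,|d_n|$ to be non-increasing in the direction of travel, which is incompatible with case~(3) of Lemma~\ref{lem:neighbors}, so $\psi(\xi)\in[-2,2]$ or $\sigma(\xi)=0$.

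The other case, however, contains a genuine gap. First, a combinatorial slip: in $\Upsilon$ every edge lies on \emph{three} faces, not two (an edge $(\alpha,\beta,\gamma)$ bounds $\xi_{\alpha,\beta}$, $\xi_{\alpha,\gamma}$, $\xi_{\beta,\gamma}$), and any two consecutive edges $e_n,e_{n+1}$ at a common vertex \emph{always} share exactly one face. So the ray never ``leaves all faces'' at a single step; it always continues in one of the three faces of $e_n$. More importantly, your key claim---that a vertex where the ray changes face must be a fork---is false. The arrow on the ``straight continuation'' edge (the one staying in the previous face) is determined by $\psi$, and it may perfectly well point \emph{toward} $v_{n+1}$; in that situation $v_{n+1}$ is a merge (three arrows in, one out), the ray is forced onto the unique outgoing edge, and it changes face without any second outgoing arrow existing. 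There is no obstruction to an escaping ray consisting entirely of merge vertices while still switching faces infinitely often, so the Fork Lemma cannot be invoked directly.

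The paper circumvents this by a limiting argument rather than by finding actual forks. Along an escaping ray the values of the region of colour~$i$ form a non-increasing sequence each time an edge of colour~$i$ is crossed; since consecutive edges have different colours, at least two colours occur infinitely often, and the corresponding sequences converge. One can then choose $n$ large so that two consecutive edges $e_n,e_{n+1}$ (sharing a face $\xi_{\alpha,\beta}$) have $|d|\le|d'|\le|d|+\varepsilon$ and $|c'|\le|c|$: this is an ``$\varepsilon$-approximate fork'', and rerunning the estimates of Lemma~\ref{lem:fork} with this slack still yields a face in $\Upsilon^{(2)}_\psi(2+M)$ near $v_n$. Iterating from the point where the ray leaves that face gives infinitely many such faces. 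The essential idea you are missing is this replacement of an exact fork by an approximate one coming from Cauchy sequences of region values.
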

\begin{proof}
	First, suppose that there exists $n_0 \in \mathbb{N}$, and a face $\xi \in \Upsilon^{(2)}$ such that, for all $n \geq n_0$, the edges $e_n$ are contained in $\xi$. As the path is descending, it means that the values of the regions meeting $\xi$ at the edges $e_n$ stay bounded. Hence, from Lemma \ref{lem:neighbors}, we infer that, either $\psi (\xi) \in [-2 , 2]$, or $\sigma (\xi) = 0$. 
  
  \vskip 5pt
	
	Now suppose that we are not in the first case. Then we prove the following.
	
	\textbf{Claim:} There exists $n_0 \in \N$ such that $e_{n_0}$ is contained in a face $\xi_0 \in \Upsilon^{(2)} (2+M)$. 
	
	The proof is very similar to the one of the Fork Lemma \ref{lem:fork}. Let $\varepsilon > 0$. We note $\a_i , \b_i , \g_i , \d_i$ the sequences of neighboring regions around the edge $e_i$. At least two sequences among $(|a_i|) , (|b_i|) , (|c_i|) , (|d_i|)$ are infinite, decreasing and bounded below. So for $n$ large enough, we have two consecutive edges $e_n = (\a , \b , \g ; \d' \rightarrow \d )$ and $e_{n+1} = (\a , \b , \d , \g \rightarrow \g')$ with a common face $\xi_{\a , \b}$, and such that 
	$$|d | \leq |d' | \leq |d | + \varepsilon \mbox{   and   } |c'| \leq |c |.$$
	Using the same argument as in Lemma \ref{lem:fork}, we can prove that, if $|a| , |b| , |c| , |d| > 2+M$, then $|ab| \leq 2 + 2M + \varepsilon$. For $\varepsilon$ small enough, we get that 
	$$\min \{ |a| , |b| \} \leq \sqrt{2+2M+\varepsilon} \leq 2+M,$$
	which gives a contradiction. So one of the regions around the vertex $v_n$ is in $\Upsilon^{(3)}_{\psi} (2+M)$. In turn, this gives the existence of a face $\xi_0$ in $\Upsilon^{(2)}_{\psi} (2+M)$ around the edge $e_n$, which proves the claim.
	
\vskip 5pt
		
	As we are not in the first case, it means that there exists $n_1 > n_0$ such that $e_{n_1} \notin \xi_0$. Let $v_1$ be the head of the arrow $e_{n_1}$ and consider the escaping ray starting at $v_1$. By the same reasoning, the ray will meet a face $\xi_1 \in  \Upsilon^{(2)} (K)$.  By induction, this proves that the initial ray will eventually meet an infinite number of faces in $\Upsilon^{(2)} (2+M)$.
\end{proof}

We will also need the following result, giving a necessary condition for a given element $\xi \in \Upsilon^{(2)}$ to satisfy $\sigma (\xi) = 0.$

\begin{Lemma}
	Suppose that we have $\psi \in {\bf \Psi}_{\Uom}$ and $\xi \in \Upsilon^{(2)}$ such that $\sigma (\xi) = 0$, and let $K > 2+M$. Then at least one of the following is true:
	\begin{itemize}
		\item The edge $\xi \in \Upsilon^{(2)} (K)$.
		\item The set $\Upsilon^{(2)} (K)$ is infinite.
	\end{itemize}
\end{Lemma}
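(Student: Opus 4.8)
The plan is to argue by assuming that $\xi \notin \Upsilon_\psi^{(2)}(K)$ and producing infinitely many faces in $\Upsilon_\psi^{(2)}(K)$. Write $\xi = (\a, \b)$ and recall from Section \ref{s:face} that the boundary $\partial \xi$ is the bi-infinite path whose edges are $(\a,\b,\g_n)$ alternating with $(\a,\b,\d_n)$, and that the values $c_n = \psi(\g_n)$, $d_n = \psi(\d_n)$ of the neighbouring regions satisfy the affine recursion there. Since $\sigma(\xi) = 0$ we have $AB = 0$, so the pairs $(c_n,d_n)$ lie on a single "null line" through the centre $(\eta,\zeta) := (\eta(ab-x),\zeta(ab-x))$ of the conic; after possibly relabelling $n \mapsto -n$, Lemma \ref{lem:neighbors} then gives one of three behaviours: $\psi(\xi) \notin [-2,2]$ and $(c_n,d_n) \to (\eta,\zeta)$ as $n\to+\infty$ while $|c_n|,|d_n| \to \infty$ as $n\to-\infty$; or $\psi(\xi)\in(-2,2)$ and the $(c_n,d_n)$ stay in a bounded region; or $\psi(\xi)\in\{-2,2\}$ and $|c_n|,|d_n|$ grow polynomially, hence tend to $\infty$ as $n\to\pm\infty$.

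In the first two situations the face equations show that all trace functions supported near $\partial\xi$ converge (resp.\ stay bounded): $\psi(\xi_{\g_n,\d_n}) = c_nd_n - x$, $\psi(\xi_{\a,\g_n}) = ac_n - z$, $\psi(\xi_{\b,\g_n}) = bc_n - y$, $\psi(\xi_{\a,\d_n}) = ad_n - y$, $\psi(\xi_{\b,\d_n}) = bd_n - z$. If $\max\{|\eta|,|\zeta|\} < K$ (or, in the bounded case, the uniform bound is $< K$), then for all large $n$ one has $\g_n,\d_n \in \Upsilon_\psi^{(3)}(K)$ and $|\psi(\xi_{\g_n,\d_n})| < K^2 + M$, so the pairwise distinct faces $\xi_{\g_n,\d_n}$ all lie in $\Upsilon_\psi^{(2)}(K)$ and we are done. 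The same conclusion follows, using one of the families $\xi_{\a,\g_n}$, $\xi_{\b,\g_n}$, $\xi_{\a,\d_n}$, $\xi_{\b,\d_n}$ instead, whenever at least one of $|\eta|,|\zeta|,|a|,|b|$ is $< K$ and the corresponding limiting face value has modulus $< K^2+M$; a short case check shows one of these applies unless $|a|,|b| \ge K$ and $|\eta|,|\zeta| \ge K$ hold simultaneously.

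The residual case — $\sigma(\xi) = 0$ with $\psi(\xi)\notin[-2,2]$ and $|a|,|b|,|\eta|,|\zeta| \ge K$, together with the borderline case $\psi(\xi)\in\{-2,2\}$ — is where the hypothesis $\xi \notin \Upsilon_\psi^{(2)}(K)$ has to be used in earnest, and I expect it to be the main obstacle. The idea is to work at the end of $\partial\xi$ where $|c_n|,|d_n| \to \infty$: there, for $n$ sufficiently far out, both $|c_n|$ and $|d_n|$ are monotone, so $\partial\xi$ contains arbitrarily long finite escaping arcs. Along such an arc one re-runs the estimate from the proof of the Claim inside Lemma \ref{lem:escaping} — that is, the Fork Lemma estimate adapted to two consecutive edges sharing the common face $\xi$, with the two "outgoing arrows" replaced by the two monotone sequences $|c_n|,|d_n|$ forced by $\sigma(\xi)=0$ — to conclude that a region adjacent to the arc lies in $\Upsilon_\psi^{(3)}(2+M)$; this in turn produces an edge of $\partial\xi$ lying on a face \emph{other than} $\xi$ in $\Upsilon_\psi^{(2)}(2+M)$. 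Since $2+M < K$ one has $\Upsilon_\psi^{(2)}(2+M) \subseteq \Upsilon_\psi^{(2)}(K)$, and since the escaping arcs may be taken arbitrarily far out, these faces are infinitely many and pairwise distinct, so $\Upsilon_\psi^{(2)}(K)$ is infinite.

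The delicate points to get right in this last step are that the relevant vertices of $\partial\xi$ far out are merges rather than forks, so Lemma \ref{lem:fork} cannot be quoted verbatim and the estimate must be carried out directly on the two boundary edges $(\a,\b,\g_n)$ and $(\a,\b,\d_n)$; and that the increments of $|c_n|,|d_n|$ need not be small, so the $\varepsilon$-approximation used in the proof of Lemma \ref{lem:escaping} must be replaced by an exact argument using the explicit formulas for $c_n,d_n$ from Section \ref{s:face} and the interpretation of $\sigma(\xi)=0$ as a reducibility of the restricted representation (Remark \ref{rk:sigma}), which is what ultimately pins down the null line on which all the neighbours of $\xi$ lie.
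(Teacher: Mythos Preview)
Your opening case analysis is reasonable, and in the subcases where one of $|a|,|b|,|\eta|,|\zeta|$ is small the explicit families you write down do work (though the ``short case check'' is not actually carried out and is more delicate than you suggest). The genuine gap is your treatment of the residual case, into which you also fold the borderline case $\psi(\xi)\in\{\pm 2\}$.

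In the residual case you propose to work at the \emph{divergent} end of $\partial\xi$, where $|c_n|,|d_n|\to\infty$. But your own residual hypothesis gives $|a|,|b|\ge K$, and for $n$ far enough out also $|c_n|,|d_n|\ge K$; thus at each such vertex $v_n=(\a,\b,\g_n,\d_n)$ all four surrounding regions fail to lie in $\Upsilon_\psi^{(3)}(K)$. By definition, a face belongs to $\Upsilon_\psi^{(2)}(K)$ only if one of its two bounding regions lies in $\Upsilon_\psi^{(3)}(K)$, so \emph{no} face through $v_n$ can be in $\Upsilon_\psi^{(2)}(K)$. No refinement of the estimate --- exact formulas, null-line interpretation, or otherwise --- can change this: the divergent end is simply the wrong place to look. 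Note also that the arrows along $\partial\xi$ point toward the convergent end, so the escaping arcs you invoke run \emph{toward} the limit, and the Claim in Lemma~\ref{lem:escaping} locates its face only once successive values become $\varepsilon$-close, i.e.\ near the convergent end --- a single location, not one per starting point.

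The paper's proof avoids the case split and works uniformly at the \emph{convergent} end. Since $(c_n,d_n)\to(\eta,\zeta)$, for large $n$ successive values of $|c_n|$ and of $|d_n|$ are within any prescribed $\varepsilon$; the Fork Lemma estimate then applies at each such $v_n$ (with an $\varepsilon$-slack) and yields a face through $v_n$ in $\Upsilon_\psi^{(2)}(2+M+\varepsilon)\subset\Upsilon_\psi^{(2)}(K)$. As $\xi$ is the only face containing more than one $v_n$, either that face is $\xi$ (first alternative) or one obtains infinitely many distinct faces (second alternative). Your residual situation is absorbed automatically: the estimate still produces a face at each $v_n$, and since $|a|,|b|\ge K$ forces $\xi\notin\Upsilon_\psi^{(2)}(K)$, that face is necessarily one of the other five, giving the desired infinite family.
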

\begin{proof}
	If $\psi (\xi) \in [-2 , 2]$, then the first condition is satisfied. So suppose that $\psi(\xi) \notin [-2 , 2]$. Let $\g_n$ and $\d_n$ be the two sequence of neighboring regions around the face $\xi$. From the previous Lemma \ref{lem:neighbors} we can see that the sequence $(c_n, d_n)$  converges to a certain point in $\C^2$ when $n$ goes to infinity. 
	
	Let $\varepsilon >0$. For $n$ large enough, the successive values of $|c_n|$ and of $|d_n|$ are as close as we want. We can then use the same proof as in the Fork Lemma to prove that for $n$ large enough, at the vertex $v_n$, which is the intersection  $\xi \cap \g_n \cap \d_n$, one of the faces containing $v_n$ is in $\Upsilon^{(2)} (2+M+\varepsilon)$ and hence in $\Upsilon^{(2)} (K)$, if $\varepsilon$ is small enough.
	
	The face $\xi$ is the only face that contains more than one vertex $v_n$. So either, $\xi$ is in $\Upsilon^{(2)} (K)$, or there exists an infinite number of faces in $\Upsilon^{(2)} (K)$.
\end{proof}

\subsection{Attracting subtree}\label{sec:att_tree}

In this section for all $\psi \in {\bf \Psi}_{\Uom}$ and for all $K \geq 2+M$ we define an attracting subtree $T_\psi (K)$. In order to do that, first we construct, for all faces $\xi \in \Upsilon^{(2)}$, an attracting subarc in the boundary of $\xi$.

\subsubsection{Attracting arc $J_\psi (K, \xi)$}

Let $K \geq 2+M$. For each map $\psi \in {\bf \Psi}_{\Uom}$ and for each face $\xi \in \Upsilon^{(2)} (K)$, we will construct explicitly a connected non-empty subarc $J = J_\psi (K, \xi)$ such that the following conditions are satisfied:
\begin{enumerate}
	\item Every edge in $\xi$ that is not in $J$ points towards $J$.
	\item For all $\xi \in \Upsilon^{(2)} (K)$, with $K \geq 2+M$,  if $X \in \Upsilon^{(k)}(K)$, with $k = 2, 3$, and $\xi \cap X = e$, then $e \in J$.
\end{enumerate}

From Lemma \ref{lem:neighbors}, we can define a function $$H_{\psi} \co \Upsilon^{(2)} \to \R \cup \{\infty\},$$ as follows:
\begin{itemize}
  \item If $\xi \in \Upsilon^{(2)}$, such that $\sigma_\psi(\xi) = 0$ or $\psi(\xi) \in [-2 , 2]$, then we define $H_{\psi}(\xi) = \infty$.
  \item If $\xi = \xi_{\a, \b} \in \Upsilon^{(2)}$, such that $\sigma_\psi(\xi) \neq 0$ and $\psi(\xi) \notin [-2 , 2]$, we have that, if $\g_n$ and $\d_n$ are the sequences of neighboring regions around $\xi$, then there exists integers $n_1 , n_2$ such that:
  \begin{itemize}
  	\item $|c_n| \leq H_{\psi} (\xi)$ and $ |d_n| \leq H_{\psi} (\xi)$ if and only if $ n_1 \leq n \leq n_2$;
  	\item $|c_n|$ and $|d_n|$ are monotonically decreasing for $n < n_1$, and increasing for $n > n_2$.
  \end{itemize}
\end{itemize}
(See Section \ref{s:neighbor} for the definition of `neighboring regions'.)
 
As the explicit expression of the function $H_{\psi}$ is not relevant in the following proofs, we will defer its definition to the Appendix \ref{app:functionH}.

\begin{Remark}
  Note that, for any fixed face $\xi\in \Upsilon^{(2)}$, the function $$H_{\cdot}(\xi) \co {\bf \Psi}_{\Uom} \to \R \cup \{\infty\}$$ defined by $\psi \mapsto H_\psi(\xi)$ is continuous.
\end{Remark}

The arc formed by the union of the edges $e_n$ for $n_1 \leq n \leq n_2$ satisfies property (1). In order for it to satisfy property (2), we need to slightly modify the function $H$ into $$H_{\psi}^{\ast} \co \Upsilon^{(2)}\times \mathbb{R} \to \R \cup \{\infty\},$$ as follows:
$$H_{\psi}^{\ast} (\xi, K) := \max \left\{ H_{\psi} (\xi), \frac{K^2+2M}{\min\{|a|, |b|\}} \right\}.$$

Now we define:
$$J_\psi (K, \xi) := \displaystyle\bigcup_{\g \in \Upsilon^{(3)}( H_{\psi}^{\ast} (\xi, K))} \g \cap \xi.$$

This is a subset of the edges in the boundary of $\xi$ such that the regions corresponding to each edge have image less than $H_{\psi}^{\ast} (\xi, K)$. 
Note that, if a face $\xi \in \Upsilon^{(2)}$, satisfies $\sigma_\psi(\xi) = 0$ or $\psi(\xi) \in [-2 , 2]$, then the subset $J_\psi (H , \xi)$ is the entire face $\xi$.

Now it is clear that the arc $J_\psi (K, \xi)$ constructed by this procedure satisfies conditions $(1)$ and $(2)$ above.

\subsubsection{Attracting subtree $T_\psi (K)$}

The previous discussion guarantees the existence of an attracting subarc for all faces $\xi \in \Upsilon^{(2)} (K)$. Hence, we can construct the set:
$$T_\psi (K) = \bigcup_{\xi \in \Upsilon^{(2)} (K)} J_\psi (\xi, K).$$
This set is a union of edges, and we can prove the following result.

\begin{Proposition}
	The set $T_\psi (K)$ is connected and $\psi$-attracting.
\end{Proposition}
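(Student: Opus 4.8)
The plan is to establish the two assertions---connectedness and the attracting property---separately, leaning on the connectedness results of Section~\ref{s:analysis} (Lemmas~\ref{lem:conn_3} and~\ref{conn_2}) for the first, and on the Fork Lemma together with the analysis of escaping rays for the second.

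First I would prove connectedness. Since $K \geq 2+M$, Lemma~\ref{conn_2} tells us that $\Upsilon_\psi^{(2)}(K)$ is edge-connected: any two faces $\xi, \xi'$ in $\Upsilon^{(2)}(K)$ are joined by a chain of faces $\xi = \xi_0, \xi_1, \dots, \xi_n = \xi'$ in which consecutive faces share an edge lying in a region of $\Upsilon_\psi^{(3)}(K)$. I would then argue that for each such shared edge $e = \xi_i \cap \xi_{i+1}$, property~(2) of the attracting arc construction forces $e \in J_\psi(K, \xi_i)$ and $e \in J_\psi(K, \xi_{i+1})$: indeed $\xi_{i+1} \in \Upsilon^{(2)}(K)$, so taking $X = \xi_{i+1}$ in property~(2) gives $e \in J_\psi(K,\xi_i)$, and symmetrically. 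Hence each $J_\psi(K,\xi_i)$ is non-empty and the union $T_\psi(K) = \bigcup_{\xi \in \Upsilon^{(2)}(K)} J_\psi(\xi,K)$ is obtained by gluing connected arcs $J_\psi(K,\xi_i)$ along shared edges following the edge-connected chain; since each individual $J_\psi(K,\xi)$ is a connected subarc of $\partial\xi$ (by construction), the whole union is connected in $\Upsilon$.

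Next I would prove that $T_\psi(K)$ is $\psi$-attracting, meaning every directed edge of $\Upsilon^{(1)}$ not in $T_\psi(K)$ lies on a directed path eventually entering $T_\psi(K)$; equivalently, no escaping ray is disjoint from $T_\psi(K)$. Suppose $\{e_n\}_{n\in\N}$ is an escaping ray. By Lemma~\ref{lem:escaping} there are two cases. If the ray is eventually contained in a face $\xi$ with $\psi(\xi) \in [-2,2]$ or $\sigma_\psi(\xi) = 0$, then by the remark following the construction $J_\psi(K,\xi)$ is the entire face $\xi$, and one checks $\xi \in \Upsilon^{(2)}(K)$ in this degenerate regime (using $K \geq 2+M$ and the bound $|\psi(\xi)|\le |ab|+M$ once some neighboring region is controlled---or, if no neighboring region is controlled, the ray's values stay bounded so some region does fall into $\Upsilon_\psi^{(3)}(K)$); hence the tail of the ray lies in $T_\psi(K)$. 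In the other case the ray meets infinitely many $\gamma \in \Upsilon_\psi(2+M) \subseteq \Upsilon_\psi(K)$, so in particular it meets some region $\gamma \in \Upsilon_\psi^{(3)}(K)$; any edge of the ray incident to such a $\gamma$ and to an adjacent face $\xi$ satisfies $\xi \in \Upsilon^{(2)}(K)$, and property~(1) together with the location of $\gamma$ inside $J_\psi(K,\xi)$ forces that edge into $T_\psi(K)$. Either way the escaping ray eventually enters $T_\psi(K)$.

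The main obstacle I anticipate is the second case's bookkeeping: verifying precisely that the edge at which an escaping ray first touches a region of $\Upsilon_\psi^{(3)}(K)$ (or touches a controlled face) actually lies in the modified arc $J_\psi(K,\xi)$ rather than merely in $\partial\xi$. This requires unwinding the definition $H_\psi^\ast(\xi,K) = \max\{H_\psi(\xi), (K^2+2M)/\min\{|a|,|b|\}\}$ and checking that a region $\gamma$ with $|\psi(\gamma)| < K$ adjacent to $\xi = \xi_{\alpha,\beta}$ indeed satisfies $|\psi(\gamma)| \le H_\psi^\ast(\xi,K)$---which is exactly the point of the second term in the max, and is why property~(2) was built into the construction. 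Once that compatibility is confirmed, the proof is a direct assembly of Lemmas~\ref{lem:conn_3},~\ref{conn_2}, and~\ref{lem:escaping} with the properties~(1)--(2) of the arcs $J_\psi(K,\xi)$.
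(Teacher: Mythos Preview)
Your connectedness argument matches the paper's exactly: use edge-connectedness of $\Upsilon_\psi^{(2)}(K)$ (Lemma~\ref{conn_2}) and property~(2) of the arcs to glue the connected subarcs $J_\psi(K,\xi_i)$ along the shared edges of a chain of faces.

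For the attracting property your route is genuinely different from the paper's and has gaps. The paper does \emph{not} invoke escaping rays here; instead it proves a local Claim: every edge in the circular neighbourhood $C(T)$ of $T$ points towards $T$. This is done by taking a vertex $v=(\alpha,\beta,\gamma,\delta)\in T$ with a hypothetical outgoing edge $e\notin T$, observing via property~(1) that $e$ cannot lie in the face $\xi_{\alpha,\beta}\in\Upsilon^{(2)}(K)$ through $v$, and then---by a case split $|a|<K$ versus $|a|>K$, using connectedness of $\Upsilon_\psi^{(3)}(K)$ in the first case and a short trace inequality in the second---producing a face of $\Upsilon^{(2)}(K)$ on the far side of $e$, which contradicts the connectedness of $T$ already established. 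Once $C(T)$ is controlled, a one-line distance argument together with the Fork Lemma handles edges farther from $T$.

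Your escaping-ray approach runs into two concrete difficulties. First, the equivalence you assert (``$T$ is attracting iff no escaping ray is disjoint from $T$'') fails in one direction: if an edge outside $T$ points away from $T$, the directed path it initiates may terminate at a sink rather than extend to an infinite ray, and then Lemma~\ref{lem:escaping} simply does not apply. Second, even along an infinite escaping ray your case analysis is incomplete. In the $\sigma(\xi)=0$ subcase you have not verified that $\xi\in\Upsilon^{(2)}(K)$ (the lemma following Lemma~\ref{lem:escaping} only gives the dichotomy ``$\xi\in\Upsilon^{(2)}(K)$ or $\Upsilon^{(2)}(K)$ is infinite'', and the second alternative does not place the ray inside $T$). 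In the second case, meeting a face $\xi\in\Upsilon^{(2)}(2+M)$ places an edge of the ray in $\partial\xi$, not in $J_\psi(K,\xi)$; property~(1) only says edges of $\partial\xi\setminus J$ point towards $J$ \emph{along} $\partial\xi$, and the ray may exit $\partial\xi$ at the very next vertex before reaching $J$. The paper's local $C(T)$ argument avoids all of this bookkeeping.
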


\begin{proof}
	Denote $T = T_\psi (K)$, and let $e$ and $e'$ be two edges in $T$. From the construction of $T$, there are two faces $\xi , \xi' \in \Upsilon^{(2)} (K)$ such that $e \in \xi$ and $e' \in \xi'$. By edge connectedness of $\Upsilon^{(2)} (K)$ we can find a sequence of faces $\xi = \xi _0 , \xi_1 , \dots , \xi_n = \xi'$ such that $\xi_k \cap \xi_{k+1} = e_k \neq \emptyset$. By property (2), each edge $e_k$ is in $T_\psi(K)$. As $J_\psi (\xi_k, K)$ is connected, the edge $e_{k-1}$ and $e_k$ are connected inside $\xi_k$. So, we get that $e$ and $e'$ are connected inside $T$.
	\vskip 5pt
	Now we prove that $T$ is $\psi$-attracting, using the following claim. 
	
	\textit{Claim:} The arrows in the circular neighborhood $C(T)$ around $T$ point towards $T$.
	
	 Indeed, suppose that we have a vertex $v = (\a ,\b ,\g ,\d) \in T$ and an edge $e \notin T$ pointing outward. One of the face containing $v$ is in $\Upsilon^{(2)}(K)$, for example $\xi_{\a, \b}$. By the property (1) of the arc $J_\psi (\xi_{\a , \b }, K)$, it is clear that $e$ cannot be contained in $\xi_{\a , \b}$. So we can assume, without loss of generality, that $e = (\b , \g , \d)$.
	
	If $|a| < K$, then we have $|a'|<K$. Then by connectedness of $\Upsilon^{(3)}(K)$, one of the region $\b , \g$ or $\d$ is also in $\Upsilon^{(3)}(K)$. So one of the faces $\xi_{\a' , \b}$, $\xi_{\a' , \g}$ or $\xi_{\a' , \d}$ is in $\Upsilon^{(2)}(K)$. This contradicts the connectedness of $T$.
	
	If $|a|> K$, then we have $|b|<K$. The same kind of inequality gives:
	$$|bc-y| \, |bd - z | < 2 |ab| + |xy| + |x| \, |b|^2 < (2+M)^3 + M^2 + 4M < (2+M)^4,$$
	which proves that one of the face $\xi_{\b, \g}$ or $\xi_{\b , \d}$ is in $\Upsilon^{(2)}(K)$, again contradicting the connectedness of $T$. This proves the claim.
	\vskip 5pt
	
	Now, if there exists an arrow outside the tree that doesn't point towards the tree, then there exists a vertex $v$ at distance at least $1$ of the tree that is a fork. Hence one of the face containing $v$ is in $\Upsilon^{(2)} (K)$ which contradicts the connectedness of $T_\psi (K)$.
\end{proof}

Using the function $H_{\psi}^{\ast}$, we have also the following characterization of the edges of $T_\psi$.

\begin{Lemma}\label{lem:edgephi}
	 Let $e \in \Upsilon^{(1)}$. Then $e \in T_\psi (K)$ if and only if there exists $\xi = (\a , \b) \in \Upsilon^{(2)} (K)$, and $\g \in \Upsilon^{(3)} (H_{\psi}^{\ast} (\xi , K))$, such that $e = \g \cap \xi$.
\end{Lemma}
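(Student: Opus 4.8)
The plan is to prove the lemma by simply unwinding the two definitions just made in this subsection, namely $T_\psi (K) = \bigcup_{\xi \in \Upsilon^{(2)} (K)} J_\psi (\xi, K)$ and $J_\psi (K, \xi) = \bigcup_{\g \in \Upsilon^{(3)}(H_{\psi}^{\ast} (\xi, K))} \g \cap \xi$. The only structural point that deserves a sentence is the interpretation of the symbol $\g \cap \xi$. Recall from Section \ref{s:face} that a face $\xi = (\a, \b) \in \Upsilon^{(2)}$ is a bi-infinite path whose edges are exactly the triples $(\a, \b, \g)$ with $i(\g, \a) = i(\g, \b) = 1$; since such a triple determines a unique edge of $\Upsilon$, the assignment sending an edge of this path to its third region is a bijection between the edges of $\xi$ and the regions adjacent to $\xi$. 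Consequently $\g \cap \xi$ is empty when $\g$ is not adjacent to $\xi$, and is the single edge $(\a, \b, \g)$ when it is; in particular, in the statement of the lemma the condition $e = \g \cap \xi$ already forces $\g$ to be a neighbouring region of $\xi$ (and to be distinct from $\a$ and $\b$).

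For the implication from right to left, suppose $\xi = (\a, \b) \in \Upsilon^{(2)}(K)$ and $\g \in \Upsilon^{(3)}(H_\psi^\ast(\xi, K))$ satisfy $e = \g \cap \xi$. By the very definition of $J_\psi(K, \xi)$ as the union of the edges $\g' \cap \xi$ over all $\g' \in \Upsilon^{(3)}(H_\psi^\ast(\xi, K))$, we get $e \in J_\psi(K, \xi)$; and since $\xi \in \Upsilon^{(2)}(K)$ this face contributes to the union defining $T_\psi(K)$, so $e \in T_\psi(K)$.

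For the converse, suppose $e \in T_\psi(K)$. By the definition of $T_\psi(K)$ there is a face $\xi \in \Upsilon^{(2)}(K)$ with $e \in J_\psi(K, \xi)$; expanding the definition of $J_\psi(K, \xi)$, there is then a region $\g \in \Upsilon^{(3)}(H_\psi^\ast(\xi, K))$ with $e \in \g \cap \xi$, and as noted above $\g \cap \xi$ is a single edge, so in fact $e = \g \cap \xi$. Writing $\xi = (\a, \b)$ completes the verification. There is essentially no obstacle here: the statement is a bookkeeping reformulation of the construction of the attracting subtree, recorded in this form because it is the way $T_\psi(K)$ is used in the later finiteness arguments; the only thing to be careful about is that the two nested unions are taken over exactly the index sets $\Upsilon^{(2)}(K)$ and $\Upsilon^{(3)}(H_\psi^\ast(\xi, K))$ appearing in the statement, which is immediate by inspection of the definitions.
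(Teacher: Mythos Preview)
Your proof is correct and follows exactly the paper's approach: the paper itself simply states ``This is a direct consequence of the definition,'' and your argument is precisely the unwinding of the two nested unions defining $T_\psi(K)$ and $J_\psi(K,\xi)$ that justifies that one-line claim. Your extra care in noting that $\g\cap\xi$ is either empty or a single edge is a welcome clarification but does not change the route.
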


\begin{proof}
	This is a direct consequence of the definition.
\end{proof}

This Lemma leads to the following property of Markoff maps in $({\bf \Psi}_{\Uom})_Q$.

\begin{Lemma}\label{lem:tree}
	Let $\psi \in ({\bf \Psi}_{\Uom})_Q$,  then the tree $T_\psi (2+M)$ is a finite attracting subtree.
\end{Lemma}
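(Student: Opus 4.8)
The plan is to show that $T_\psi(2+M)$ is a finite subtree by combining the three facts we have available: (a) $T_\psi(K)$ is connected and $\psi$-attracting for every $K\geq 2+M$ (the previous Proposition); (b) $T_\psi(2+M)$ is built as a union of the attracting arcs $J_\psi(\xi,2+M)$ over all faces $\xi\in\Upsilon^{(2)}(2+M)$; and (c) the BQ-conditions. Since $\psi\in({\bf \Psi}_{\Uom})_Q$, condition (BQ4) tells us that the index set $\Upsilon^{(2)}(2+M)=\Upsilon^{(2)}_\psi(2+M)$ is \emph{finite}. So it suffices to prove that each individual arc $J_\psi(\xi,2+M)$ appearing in the union is finite; a finite union of finite arcs is then a finite tree, and connectedness from the Proposition makes it a subtree.

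\textbf{Key steps.} First I would recall, via Lemma \ref{lem:edgephi}, that an edge $e$ lies in $J_\psi(\xi,2+M)$ exactly when $e=\g\cap\xi$ for some region $\g\in\Upsilon^{(3)}(H^\ast_\psi(\xi,2+M))$. Thus $J_\psi(\xi,2+M)$ is finite precisely when only finitely many neighboring regions $\g_n,\d_n$ of $\xi$ have $|\psi(\g_n)|,|\psi(\d_n)|\le H^\ast_\psi(\xi,2+M)$. Now invoke the BQ-conditions to control $H^\ast_\psi(\xi,2+M)=\max\{H_\psi(\xi),\,(K^2+2M)/\min\{|a|,|b|\}\}$ for $\xi=(\a,\b)$: condition (BQ1) says $\psi(\xi)\notin[-2,2]$ and condition (BQ3) says $\sigma_\psi(\xi)\neq0$, so by the dichotomy in the definition of $H_\psi$ (equivalently by part (3) of Lemma \ref{lem:neighbors}) we are in the case where $|c_n|,|d_n|$ grow \emph{exponentially} as $n\to\pm\infty$ and $H_\psi(\xi)<\infty$. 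The second term in $H^\ast_\psi$ is also finite since $\min\{|a|,|b|\}>0$ (if $\min\{|a|,|b|\}=0$ then one of $a,b$ is zero, but then the face equation $ab=\l_{\a\b}+\psi(\xi)$ would force $\psi(\xi)=\pm\l_{\a\b}$... — actually more simply, $H^\ast_\psi(\xi,K)$ is finite as soon as $\min\{|a|,|b|\}\neq0$, and if it were $0$ then by $\sigma(\xi)\neq 0$ combined with the formula for $AB$ one still stays in a case giving a finite arc, but the cleanest route is: since the arc defined by $H_\psi$ already satisfies property (1), and property (2) only enlarges it by boundedly much). Hence $H^\ast_\psi(\xi,2+M)<\infty$, and since $|\psi(\g_n)|,|\psi(\d_n)|\to\infty$ exponentially, only finitely many of them fall below this finite threshold. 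Therefore $J_\psi(\xi,2+M)$ is a finite arc.

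\textbf{Conclusion.} Combining: $T_\psi(2+M)=\bigcup_{\xi\in\Upsilon^{(2)}(2+M)}J_\psi(\xi,2+M)$ is a finite union (by (BQ4)) of finite arcs (just shown), hence finite; it is connected and $\psi$-attracting by the preceding Proposition, so it is a finite attracting subtree, as claimed.

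\textbf{Main obstacle.} The delicate point is handling the auxiliary term $(K^2+2M)/\min\{|a|,|b|\}$ in the definition of $H^\ast_\psi$: one must make sure this cannot blow up, i.e.\ that for a face $\xi=(\a,\b)\in\Upsilon^{(2)}(2+M)$ with $\sigma_\psi(\xi)\neq0$ and $\psi(\xi)\notin[-2,2]$, one genuinely has $\min\{|\psi(\a)|,|\psi(\b)|\}$ bounded away from $0$ — or, failing a uniform bound, that the finitely many values still each give a finite threshold. Since there are only finitely many faces in $\Upsilon^{(2)}(2+M)$, it is enough that each one individually has a positive such minimum, and if some region had $\psi$-value $0$ one checks directly from the face equation and the formula $AB=\sigma(\xi)/(4-\psi(\xi)^2)$ that (BQ1) and (BQ3) still place us in the exponential-growth regime; making this bookkeeping clean is the only real work, the rest being an immediate assembly of the cited results.
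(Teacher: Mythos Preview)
Your approach is essentially identical to the paper's proof: (BQ4) gives finitely many faces in $\Upsilon^{(2)}_\psi(2+M)$, and for each such face (BQ1) and (BQ3) force $H^\ast_\psi(\xi,2+M)<\infty$, so each $J_\psi(\xi,2+M)$ is finite; the tree is then a finite union of finite arcs. The paper's argument is just the two-line version of what you wrote.

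One comment on your ``main obstacle''. Your worry about $\min\{|a|,|b|\}=0$ is legitimate, but your proposed resolution is not quite right: appealing to exponential growth of $|c_n|,|d_n|$ only shows that the arc cut out by the \emph{unmodified} threshold $H_\psi(\xi)$ is finite, whereas $J_\psi(\xi,K)$ is defined via $H^\ast_\psi(\xi,K)$, which would genuinely be $+\infty$ if some region value vanished. The clean way out is to observe that this case is already excluded by (BQ4): if $\psi(\a)=0$ for some region $\a$, then for \emph{every} region $\g$ adjacent to $\a$ the face equation gives $|\psi(\xi_{\a,\g})|=|\lambda_{\a\g}|\le M<(2+M)^2+M$, and since $|\psi(\a)|=0<2+M$ we get $\xi_{\a,\g}\in\Upsilon^{(2)}_\psi(2+M)$ for infinitely many $\g$, contradicting (BQ4). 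Hence for $\psi\in({\bf \Psi}_{\Uom})_Q$ one automatically has $\psi(\a)\neq 0$ for every region, so $\min\{|a|,|b|\}>0$ and $H^\ast_\psi(\xi,2+M)<\infty$. The paper does not spell this out either, so you are not missing anything the paper supplies; you simply need this observation rather than the exponential-growth detour.
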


\begin{proof}
	Suppose $\psi \in ({\bf \Psi}_{\Uom})_Q$, then the set $\Upsilon_\psi^{(2)} (2+M)$ is finite, and for each element $\xi \in \Upsilon_\psi^{(2)} (2+M)$, we have $\psi (\xi) \notin [-2 , 2]$ and $\sigma (\xi) \neq 0$. Hence, the function $H_{\psi} (\xi, 2+M)$ is finite, which means that the subarc $J_{2+M} (\xi)$ is finite. So the subtree $T_\psi (2+M)$ is a finite union of finite subarcs. 
\end{proof}

\section{Fibonacci growth and proof of Theorem \ref{Main}} \label{s:fibonacci}

In this section we define the notion of Fibonacci growth for a function defined on $\Upsilon^{(3)}$, and prove that a map satisfying the (BQ)-conditions has Fibonacci growth. We will then use this to prove that the set $({\bf \Psi}_{\Uom})_Q$ of Markoff maps satisfying the (BQ)-conditions is an open domain of discontinuity for the mapping class group action, and we will also give different characterizations for the corresponding representations in $\X_Q(N)$.

\subsection{Fibonacci growth}

\subsubsection{Fibonacci functions}

We need to introduce the following notation. Given an oriented edge $\vec{e}$, we define the set $\Upsilon^{(k),0}(\vec{e})$, where $k = 2, 3$, as the subset of $\Upsilon^{(k)}$ given by the three $k$--simplices (either regions or faces) containing $\vec{e}$.  Removing $\vec{e}$ from $\Upsilon$ leaves two disjoint subtrees $\Upsilon^\pm$. Let $\Upsilon^+$ be the one containing the head of $\vec{e}$. We define $\Upsilon^{(k),-}(\vec{e})$ to be the subset of $\Upsilon^{(k)}$ given by the $k$--simplices whose boundary edges lie in $\Upsilon^-$, and similarly for $\Upsilon^{(k),+}(\vec{e})$. We have the following decomposition: 
$$\Upsilon^{(k)} = \Upsilon^{(k), -}(\vec{e}) \cup \Upsilon^{(k), 0}(\vec{e}) \cup \Upsilon^{(k), +}(\vec{e})$$
Note that $\Upsilon^{(k), +}(\vec{e}) = \Upsilon^{(k), -}(-\vec{e})$, where $-\vec{e}$ is the same edge as $\vec{e}$, pointing in the opposite direction.  Let $\Upsilon^{(k), 0 \pm}(\vec{e}) = \Upsilon^{(k), \pm}(\vec{e}) \cup \Upsilon^{(k), 0}(\vec{e}).$ 

We put the standard metric on the $1$--skeleton $\Upsilon^{(1)}$ graph $\Upsilon$, where every edge has length one. Given an edge $e$, we need to define the distance $d_e(X)$, where $X\in \Upsilon^{(3)}\cap \Upsilon^{(2)}$. If $X\in \Upsilon^{(k), 0-}(\vec{e})$, then we say that $d_e(X) = d_v(X)$, where $v$ is the head of $\vec{e}$, while, if $X\in \Upsilon^{(k), 0-}(-\vec{e})$, we say that $d_e(X) = d_v(X)$, where $v$ is the head of $-\vec{e}$.

There are two types of Fibonacci function that we will use: one defined over the set of regions $\Upsilon^{(3)}$, and one defined over the set of faces $\Upsilon^{(2)}$. This will allow us to give the notion of `Fibonacci growth'. In Proposition \ref{prop:fib_equivalence} we will see that the two notions are related, and that they express the word length of the elements that they represent.  

\begin{figure}
[hbt] \centering
\includegraphics[height=4.5 cm]{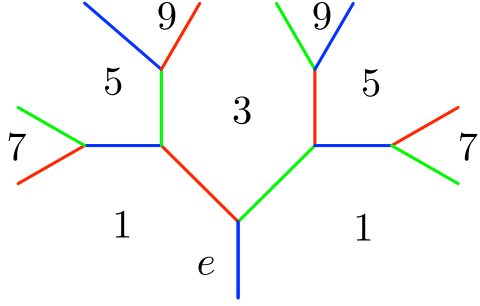}
\caption{The Fibonacci function $F_e\co \Upsilon^{(3)} \to \mathbb{Z}_{\geq 0}$. At each vertex there is an additional edge, not shown in the picture.}
\label{fig:Fib_edge}
\end{figure}

Fix an edge $e$ and define the function $$F_e \co \Upsilon^{(3)} \to \mathbb{Z}_{\geq 0}$$
as follows. We set  $F_e (\a) = 1$ on the three regions around $e$ (on $ \Upsilon^{(3), 0}(\vec{e})$ ), and for every `new' region $\alpha \in \Upsilon^{(3)}$ we set $F_e (\a)$ as the sum of the (already assigned) values of the other three regions meeting $\alpha$ at the same vertex. More formally, we define $F_e$ as follows:
$$F_e(\alpha) =
\left\{
	\begin{array}{ll}
		1  & \mbox{if } d_e(\a) = 0, \\
		F_e(\beta)+F_e(\gamma)+F_e(\delta) & \mbox{if } (\alpha, \beta, \gamma, \delta) \in \Upsilon^{(0)} \mbox{ and } d_(\beta), d_e(\gamma), d_e(\delta) < d_e(\alpha).
	\end{array}
\right.$$
This notion is due to Bowditch \cite{bow_mar}. In Figure \ref{fig:Fib_edge}, you can see the value of $F_e$ on the region given by the edges in the picture, and an additional edge perpendicular to the face (and coming out of it).

\begin{figure}
[hbt] \centering
\includegraphics[height=4.5 cm]{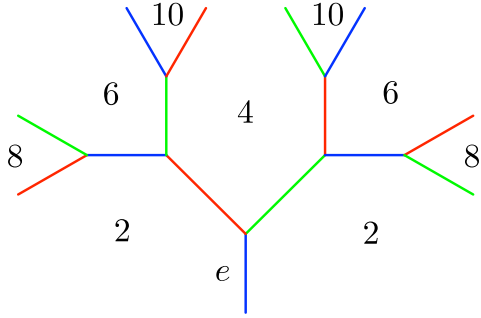}
\caption{The Fibonacci function $F_e\co \Upsilon^{(2)} \to \mathbb{Z}_{\geq 0}$. (Note that at every vertex there is a forth edge not drawn.)}
\label{fig:fib_vertex}
\end{figure}

If we consider the faces, we define $$F_e \co \Upsilon^{(2)} \to \mathbb{Z}_{\geq 0}$$ to be $2$ on the three faces around $e$, and then we define $F_e$ on the remaining faces by assigning to every `new' face $(\alpha, \beta) \in \Upsilon^{(2)}$ the sum of the (already assigned) values of the other two faces meeting $(\alpha, \beta)$ at the same vertex, and having shorter distance from $e$. More formally, we define $F_v$ as follows:
  $$F_e(\xi) =
  \left\{
  	\begin{array}{ll}
  		2  & \mbox{if } d_e(\xi) = 0 \\
  		F_e(\xi')+F_e(\xi'') & \mbox{if } d_e(\xi'), d_e(\xi'')< d_e(\xi), \xi \cap \xi' \in \Upsilon^{(1)},  \xi \cap \xi'' \in \Upsilon^{(1)}.
  	\end{array}
  \right.$$
This notion is a modification of the notion of Fibonacci function given by Hu-Tan-Zhang \cite{hu_pol}, who define the Fibonacci function $F_v$ with respect to a vertex $v$. Recall also the correspondence between faces and bi-colored geodesics, underlined in Remark \ref{bi-colored}. In Figure \ref{fig:fib_vertex} we only draw the faces in the boundary of a region, but in Figure \ref{fig:fib_vertex2} you can see a more complete picture.

\begin{figure}
[hbt] \centering
\includegraphics[height=5.5 cm]{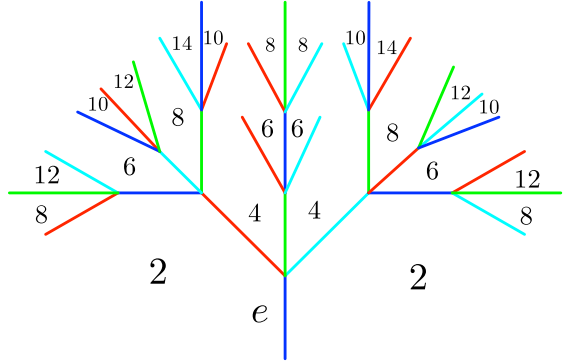}
\caption{The Fibonacci function $F_e$ on (some of) the faces (or bi-colored geodesics).}
\label{fig:fib_vertex2}
\end{figure}

The following lemma can be easily proved by induction. Its corollary shows that the concept of upper and lower Fibonacci bound is independent of the edge $e$ used.

\begin{Lemma}\label{fib_bound}{\rm (Lemma 2.1.1 \cite{bow_mar}; Lemma 25-26-27 \cite{hua_sim})}
Let $e$ be an edge that is the intersection of the three regions $\a_1, \a_2, \a_3$ and let $f\co \Upsilon^{(3)} \rightarrow [0, \infty)$ be a function. Let $M = \mathrm{max}\{f(\a_1), f(\a_2), f(\a_3)\}$,  $m = \mathrm{min}\{f(\a_1), f(\a_2), f(\a_3)\}$.   Let $\a, \b, \g, \d$ be four regions that meet at the same vertex and such that $\d$ is strictly farther from $e$ than $\a, \b, \g$. Then:
%, and $\mu = \mathrm{min}\{f(X_i) + f(X_j)\}_{i\neq j}$. 
\begin{enumerate}
  \item  $f(\d)\leq f(\a)+f(\b)+f(\g)+2c,$ for all such $\a, \b, \g, \d$ then we have that $f(X)\leq (M+c)F_e(X)-c,$ for all but finitely many $X \in \Upsilon^{(3)}$.
  \item If there exists $0 \leq c < m $ such that $f(\d)\geq f(\a)+f(\b)+f(\g)-2c,$ for all such $\a, \b, \g, \d$, then we have that $f(X)\geq (m-c)F_e(X)+c,$ for all but finitely many $X \in \Upsilon^{(3)}$.
  \end{enumerate}
\end{Lemma}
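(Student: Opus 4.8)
The plan is to prove both statements simultaneously by induction on $d_e(X)$, exploiting the recursive definition of $F_e$ and the hypothesis on $f$. First I would observe that the base case is essentially trivial: the three regions $\a_1,\a_2,\a_3$ around $e$ satisfy $F_e(\a_i)=1$, and by definition $m \le f(\a_i) \le M$, so the desired inequalities $f(\a_i)\le (M+c)F_e(\a_i)-c = M$ and $f(\a_i)\ge (m-c)F_e(\a_i)+c = m$ hold at distance zero. (There will be a handful of regions at small distance where a new region meets $e$ in two of its three bounding regions rather than three; these are the ``finitely many exceptions'' allowed in the statement, so I would simply exclude them and start the genuine induction once every ``new'' region meets three strictly-closer regions.)

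For the inductive step in part (1): given $(\a,\b,\g,\d)\in\Upsilon^{(0)}$ with $\d$ strictly farther from $e$, so that $F_e(\d)=F_e(\a)+F_e(\b)+F_e(\g)$, I would apply the hypothesis $f(\d)\le f(\a)+f(\b)+f(\g)+2c$ together with the three inductive inequalities $f(\a)\le (M+c)F_e(\a)-c$, and similarly for $\b,\g$. Adding these three gives $f(\a)+f(\b)+f(\g)\le (M+c)(F_e(\a)+F_e(\b)+F_e(\g)) - 3c = (M+c)F_e(\d)-3c$, hence $f(\d)\le (M+c)F_e(\d)-3c+2c = (M+c)F_e(\d)-c$, which is exactly what we want. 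The key algebraic point is that the additive error terms are designed to telescope: the $-c$ per region sums to $-3c$, the slack $+2c$ brings it back to $-c$, matching the constant in the target inequality. Part (2) is symmetric: using $f(\d)\ge f(\a)+f(\b)+f(\g)-2c$ and the lower inductive bounds $f(\a)\ge (m-c)F_e(\a)+c$, summing gives $f(\a)+f(\b)+f(\g)\ge (m-c)F_e(\d)+3c$, so $f(\d)\ge (m-c)F_e(\d)+3c-2c = (m-c)F_e(\d)+c$. The hypothesis $c<m$ guarantees $m-c>0$, so the bound is non-vacuous.

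The main obstacle is not the algebra but the bookkeeping around the base of the induction — precisely handling the finitely many regions near $e$ where the recursive formula for $F_e$ does not yet apply (because the region abuts $e$ or abuts a vertex with fewer than three strictly-closer neighbors), and checking that once we are past this finite collar, every region genuinely has three bounding regions strictly closer to $e$, so that the recursion and the induction both engage cleanly. I would dispatch this by noting that outside a ball of bounded radius around $e$ the tree structure of $\Upsilon$ forces the ``three strictly-closer neighbors'' property, and invoke the fact (also used implicitly in Bowditch \cite{bow_mar}) that the inequalities we want need only hold for all but finitely many $X$, so we lose nothing by absorbing this collar into the exceptional set. The corollary that the notion of Fibonacci bound is edge-independent then follows because changing $e$ to $e'$ changes $F_e$ to $F_{e'}$ only by bounded multiplicative and additive factors on the overlap, which is again a finite-collar argument.
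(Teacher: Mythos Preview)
Your induction argument is correct and is exactly the approach the paper intends: the paper offers no detailed proof, simply remarking that the lemma ``can be easily proved by induction'' with citations to Bowditch and Huang--Norbury, and your telescoping computation is the standard way to carry this out.

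One small comment: your worry about a ``finite collar'' of exceptional regions near $e$ is unnecessary in this $4$--valent tree. For any region $X\notin\{\a_1,\a_2,\a_3\}$ there is a unique vertex of $\partial X$ closest to $e$, and at that vertex the three other regions are automatically strictly closer to $e$; so the recursive formula for $F_e$ and your inductive step both apply cleanly from the very first step. In fact your computation shows the inequalities hold for \emph{all} $X\in\Upsilon^{(3)}$, not merely all but finitely many --- the phrasing in the statement is slightly weaker than what the argument actually gives.
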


\begin{Corollary}\label{fib_cor_upper}{\rm (Corollary 2.1.2 in \cite{bow_mar})}
Suppose $f\co \Upsilon^{(3)} \rightarrow [0, \infty)$ satisfies an inequality of the form $f(\d) \le f(\a)+f(\b)+f(\g)+c$ [resp. $f(\d) \ge f(\a)+f(\b)+f(\g)+c$] for some fixed constant $c$, whenever $\a, \b, \g, \d \in \Upsilon^{(3)}$ meet at a vertex. Then for any given edge $e \in \Upsilon^{(1)}$, there is a constant $K>0$, such that $f(X) \le KF_e(X)$ [resp. $f(X) \ge KF_e(X)$] for all $X \in \Upsilon^{(3)}$.
\end{Corollary}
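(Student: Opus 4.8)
The plan is to bootstrap Lemma \ref{fib_bound}, whose two conclusions hold only ``for all but finitely many $X$'', into bounds valid at \emph{every} region $X\in\Upsilon^{(3)}$, by absorbing the finitely many exceptional regions into the multiplicative constant. Throughout, fix the edge $e$, write $e=\a_1\cap\a_2\cap\a_3$, and put $M_0=\max_i f(\a_i)$ and $m_0=\min_i f(\a_i)$.

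Take the upper-bound version first. One may assume $c\ge 0$, since if $c<0$ the hypothesis $f(\d)\le f(\a)+f(\b)+f(\g)+c$ holds a fortiori with $c$ replaced by $0$. Matching the ``$2c$'' appearing in Lemma \ref{fib_bound}(1) with the ``$c$'' of the corollary (so the lemma is invoked with constant $c/2$) produces a finite set $\mathcal E\subset\Upsilon^{(3)}$ such that
$$f(X)\ \le\ \Big(M_0+\tfrac c2\Big)F_e(X)-\tfrac c2\ \le\ \Big(M_0+\tfrac c2\Big)F_e(X)\qquad\text{for every }X\notin\mathcal E .$$
The point that makes the upgrade possible is that $F_e(X)\ge 1$ for \emph{all} $X\in\Upsilon^{(3)}$: the three regions around $e$ carry the value $1$, and each further region carries a sum of three already-assigned nonnegative values of which at least one is positive. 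Hence, with $K:=\max\big(M_0+\tfrac c2,\ \max_{X\in\mathcal E}f(X)\big)$ — finite because $\mathcal E$ is finite — one gets $f(X)\le K\le KF_e(X)$ on $\mathcal E$ and $f(X)\le(M_0+\tfrac c2)F_e(X)\le KF_e(X)$ off $\mathcal E$, i.e.\ $f\le KF_e$ on all of $\Upsilon^{(3)}$.

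The lower-bound version is symmetric, using Lemma \ref{fib_bound}(2). For $c\ge0$ — the only case arising in the applications — the hypothesis gives $f(\d)\ge f(\a)+f(\b)+f(\g)$, so the lemma applies with constant $0$ (legitimate since then $0<m_0$) and yields a finite set $\mathcal E$ and a constant $K_0>0$ with $f(X)\ge K_0F_e(X)$ for $X\notin\mathcal E$. Since $\mathcal E$ is finite and $f$ is positive on it, $\min_{X\in\mathcal E}f(X)/F_e(X)>0$, and taking $K:=\min\big(K_0,\ \min_{X\in\mathcal E}f(X)/F_e(X)\big)>0$ gives $f\ge KF_e$ throughout $\Upsilon^{(3)}$.

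I expect the one genuinely delicate point — and the only place the two directions differ — to be the treatment of the finitely many exceptional regions. For the upper bound it is automatic, thanks to $F_e\ge 1$. For the lower bound it needs $f>0$ on $\mathcal E$; this holds automatically once the three regions around $e$ carry positive values (positivity then propagates to every region by induction on $d_e$, since each new region dominates a sum over three strictly closer ones), and is in any case part of the positivity/growth hypotheses under which this corollary is invoked. The remaining steps — the $2c$-versus-$c$ rescaling, the reduction to $c\ge0$, and the bound $F_e\ge1$ — are routine bookkeeping.
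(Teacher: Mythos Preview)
The paper does not supply its own proof of this corollary; it simply records it as Bowditch's Corollary~2.1.2, so there is nothing in the paper to compare against beyond Lemma~\ref{fib_bound}. Your approach --- apply Lemma~\ref{fib_bound} to get the bound off a finite exceptional set, then absorb that set into the multiplicative constant using $F_e\ge 1$ --- is exactly the intended route, and the upper-bound argument is clean and correct.

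For the lower bound you are right to flag the positivity issue: as literally stated in the paper (with $f\co\Upsilon^{(3)}\to[0,\infty)$ and an arbitrary constant $c$), the ``resp.'' clause is not quite true without the extra hypothesis that $f$ is strictly positive on the regions around $e$ (take $f\equiv 0$ and $c\le 0$). Your handling --- restrict to $c\ge 0$ and assume $m_0>0$, both of which hold in the paper's sole application (Theorem~\ref{thm:lower_bound}, which in any case invokes Lemma~\ref{fib_bound} directly rather than this corollary) --- is the honest way to state and prove what is actually needed. So the proposal is correct and matches the expected argument; the caveat you insert for the lower bound is not a defect of your proof but of the corollary's phrasing.
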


\subsubsection{Fibonacci growth}

We can now define when a function $f\co\Upsilon^{(3)} \to \R$, or $g\co\Upsilon^{(2)} \to \R$, has an upper or lower Fibonacci bound, or Fibonacci growth. Notice that this definition say something about the asymptotic growth of the function, and hence it is independent of the edge $e$ used in the definition of the respective Fibonacci function.

\begin{Definition}
  Suppose $f\co\Upsilon^{\prime}\subset\Upsilon^{(3)} \to \R$ or $g\co\Upsilon^{\prime}\subset\Upsilon^{(2)} \to \R$. We say that:
  \begin{itemize}
    \item $f$ [resp. $g$] has an {\em upper Fibonacci bound on} $\Upsilon^{\prime}$ if there is some constant $\kappa > 0$ such that $f(X) \le \kappa\, F_e(X)$ [resp. $g(X) \le \kappa\, F_e(X)$] for all but finitely many $X \in \Upsilon^{\prime}$ ;
    \item $f$ [resp. $g$] has a {\em lower Fibonacci bound on} $\Upsilon^{\prime}$ if there is some constant $\kappa > 0$ such that $f(X) \ge \kappa\, F_e(X)$ [resp. $g(X) \ge \kappa\, F_e(X)$] for all but finitely many $X \in \Upsilon^{\prime}$;
    \item $f$ [resp. $g$] has {\em Fibonacci growth} on $\Upsilon^{\prime}$ if it has both upper and lower Fibonacci bounds on $\Upsilon^{\prime}$;
    \item $f$ [resp. $g$] has {\em Fibonacci growth} if it has Fibonacci growth on {\it all} of $\Upsilon^{(3)}$ [resp. $\Upsilon^{(2)}$].
  \end{itemize}
\end{Definition}

\begin{Remark}
  Note that, equivalently, we can say that:
  \begin{itemize}
    \item $f$ [resp. $g$] has an {\em upper Fibonacci bound on} $\Upsilon^{\prime}$ if there are some constants $\kappa, C > 0$ such that $f(X) \le \kappa\, F_e(X) +C$ [resp. $g(X) \le \kappa\, F_e(X) +C$] for all $X \in \Upsilon^{\prime}$ ;
    \item $f$ [resp. $g$] has an {\em lower Fibonacci bound on} $\Upsilon^{\prime}$ if there are some constants $\kappa, C > 0$ such that $f(X) \ge \kappa\, F_e(X)-C$ [resp. $g(X) \ge \kappa\, F_e(X) -C$] for all $X \in \Upsilon^{\prime}$.
  \end{itemize}
\end{Remark}

The function that we will consider is the function $\log^{+}|\psi| \co \Upsilon^{(2)}\cup \Upsilon^{(3)}\to \R$, where $\psi \in {\bf \Psi}_{\Uom}$ and  $\log^{+}|\psi|:= \max\{\log|\psi|, 0\}$. For any $\Uom$-Markoff map $\psi$ the function $\log^{+}|\psi|$ will have an upper Fibonacci bound, but we will need to restrict to maps in the Bowditch domain to get maps having a lower Fibonacci bound (and so a Fibonacci growth).

In particular, we will prove that, when $\psi\in ({\bf \Psi}_{\Uom})_Q$, then $\log^{+}|\psi|$ has Fibonacci growth on $\Upsilon^{(3)}$. The following result will then show that $\log^{+}|\psi|$ has Fibonacci growth on $\Upsilon^{(2)}$ as well.

\begin{Proposition}\label{prop:fib_equivalence}
Let $\psi\in ({\bf \Psi}_{\Uom})_Q$. The function $\log^{+}|\psi|$ has Fibonacci growth on $\Upsilon^{(3)}$ if and only if $\log^{+}|\psi|$ has Fibonacci growth on $\Upsilon^{(2)}$.
\end{Proposition}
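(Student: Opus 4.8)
The plan is to pass between the two Fibonacci functions by a purely combinatorial identity and then transport the bounds across the Face Equation $\psi(\a)\psi(\b)=\l_{ij}+\psi(\xi_{\a,\b})$. The first step is to prove, for every edge $e$, that $F_e(\xi_{\a,\b})=F_e(\a)+F_e(\b)$ for all faces $\xi_{\a,\b}\in\Upsilon^{(2)}$. This is an induction on $d_e(\cdot)$: it holds on the three faces around $e$ (both recursions give $2=1+1$), and when a new region $\d$ is born at a vertex $v=(\a,\b,\g,\d)$, so that $F_e(\d)=F_e(\a)+F_e(\b)+F_e(\g)$, the faces $\xi_{\a,\d},\xi_{\b,\d},\xi_{\g,\d}$ are born at $v$ as well, and the face recursion gives for instance $F_e(\xi_{\a,\d})=F_e(\xi_{\a,\b})+F_e(\xi_{\a,\g})$, which by the inductive hypothesis equals $(F_e(\a)+F_e(\b))+(F_e(\a)+F_e(\g))=F_e(\a)+F_e(\d)$. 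In particular $\max\{F_e(\a),F_e(\b)\}\le F_e(\xi_{\a,\b})\le F_e(\a)+F_e(\b)$, so a face is deep in $\Upsilon$ exactly when at least one of its two defining regions is. Since (as noted before the statement) $\log^{+}|\psi|$ always admits an upper Fibonacci bound on $\Upsilon^{(3)}$, and then $|\psi(\xi_{\a,\b})|\le|\psi(\a)||\psi(\b)|+M$ together with the identity gives one on $\Upsilon^{(2)}$, both upper bounds hold for every $\psi$ and the proposition reduces to the equivalence of the two \emph{lower} Fibonacci bounds.

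Assume first that $\log^{+}|\psi|$ has a lower Fibonacci bound on $\Upsilon^{(3)}$. A preliminary observation is that for $\psi\in({\bf \Psi}_{\Uom})_Q$ no region has $\psi$-value $0$: if $\psi(\b)=0$ then by the Face Equation $\psi(\xi_{\a,\b})=-\l_{ij}$ for every region $\a$ adjacent to $\b$, so all these infinitely many faces lie in $\Upsilon^{(2)}_{\psi}(2+M)$, contradicting (BQ4). Now let $\xi_{\a,\b}$ be a deep face; at least one of $\a,\b$, say $\a$, is deep, so $|\psi(\a)|$ is large by the lower bound on $\Upsilon^{(3)}$, while $|\psi(\b)|$ is bounded below by a positive constant --- either again by the lower bound (if $\b$ is deep) or by the minimum of $|\psi|$ over the finitely many regions with $F_e$ below a fixed threshold (if $\b$ is shallow; this minimum is positive by the observation). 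Hence $|\psi(\xi_{\a,\b})|\ge|\psi(\a)||\psi(\b)|-M\ge\frac{1}{2}|\psi(\a)||\psi(\b)|$ for all but finitely many $\xi_{\a,\b}$, and the identity converts the lower Fibonacci bound on $\Upsilon^{(3)}$ into one on $\Upsilon^{(2)}$.

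The reverse implication is the delicate one, and the place I expect to spend the most care. The naive approach is to work at the birth vertex $v=(\a,\b,\g,\d)$ of a region $\a$, multiply the three Face Equations to obtain $|\psi(\a)|^{3}\,|\psi(\b)\psi(\g)\psi(\d)|\ge\frac{1}{8}\,|\psi(\xi_{\a,\b})\psi(\xi_{\a,\g})\psi(\xi_{\a,\d})|$ for $\a$ far enough out, and then feed in the lower Fibonacci bound on $\Upsilon^{(2)}$ for the numerator --- here the identity of the first step conspires so that $F_e(\xi_{\a,\b})+F_e(\xi_{\a,\g})+F_e(\xi_{\a,\d})=4F_e(\a)$ --- and the upper Fibonacci bound on $\Upsilon^{(3)}$ for $|\psi(\b)\psi(\g)\psi(\d)|$ (recall $F_e(\b)+F_e(\g)+F_e(\d)=F_e(\a)$). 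This only yields a lower Fibonacci bound on $\Upsilon^{(3)}$ with a constant of the shape $\frac{1}{3}(4\kappa_1-\kappa_2)$, which is not visibly positive, so one must do better. I would instead exploit the full strength of the (BQ)-conditions: (BQ1) and (BQ3) force, via Lemma \ref{lem:neighbors}(3), the neighbouring regions along every face to grow exponentially, and this, combined with (BQ4) and the connectedness results, gives that $\Upsilon^{(3)}_{\psi}(K)$ is finite for every $K$. Granting this, the Edge Equation $\psi(\d)+\psi(\d')=-\psi(\a)\psi(\b)\psi(\g)+(\text{linear in }\psi(\a),\psi(\b),\psi(\g))$ shows that at every vertex all of whose regions $\a,\b,\g$ are large enough the cubic term dominates, so $\log^{+}|\psi(\d)|\ge\log^{+}|\psi(\a)|+\log^{+}|\psi(\b)|+\log^{+}|\psi(\g)|-\log 4$ off a finite set of vertices, and Corollary \ref{fib_cor_upper} then delivers the lower Fibonacci bound on $\Upsilon^{(3)}$. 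The main obstacle is that a vertex far from $e$ can still be incident to one of the finitely many shallow regions, so the cubic-domination estimate has to be set up using that $|\psi|$ is bounded below by a fixed positive constant on this finite set, together with a suitable choice of the reference edge $e$ concentrating that finite set near $e$.
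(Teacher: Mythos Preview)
Your forward direction (regions $\Rightarrow$ faces) is correct and is essentially the paper's argument; the combinatorial identity $F_e(\xi_{\a,\b})=F_e(\a)+F_e(\b)$ you prove is exactly the second assertion of Proposition~\ref{prop:word}, which the paper invokes at that point.

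For the converse you overlooked a much simpler trick, and this is what the paper actually does. Instead of the three faces $\xi_{\a,\b},\xi_{\a,\g},\xi_{\a,\d}$ at the birth vertex of $\a$ (all three containing~$\a$, leading to your non-positive constant $\tfrac13(4\kappa_1-\kappa_2)$), take any vertex on $\partial\a$ and use the triple $\xi_{\a,\b},\,\xi_{\a,\g},\,\xi_{\b,\g}$, only \emph{two} of which contain~$\a$. Since $|ab|\cdot|ac|/|bc|=|a|^{2}$, the Face Equations give (for all but finitely many faces, using (BQ4) exactly as in your forward argument)
\[
\log^{+}|\psi(\a)|\;\approx\;\tfrac12\Bigl(\log^{+}|\psi(\xi_{\a,\b})|+\log^{+}|\psi(\xi_{\a,\g})|-\log^{+}|\psi(\xi_{\b,\g})|\Bigr),
\]
and your own identity gives the matching combinatorial statement
\[
F_e(\a)=\tfrac12\Bigl(F_e(\xi_{\a,\b})+F_e(\xi_{\a,\g})-F_e(\xi_{\b,\g})\Bigr).
\]
Now a \emph{lower} Fibonacci bound on the first two faces combined with the \emph{upper} Fibonacci bound on the third (which you already have for free) yields a lower bound on $\log^{+}|\psi(\a)|$ with a genuinely positive constant. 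The paper's proof is precisely this add--add--subtract of the three estimates; see equations~\eqref{eq1}--\eqref{eq3}.

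Your proposed alternative for the converse is not wrong in principle, but notice that it never uses the hypothesis ``Fibonacci growth on $\Upsilon^{(2)}$'': what you are really sketching is a direct proof that $\psi\in({\bf \Psi}_{\Uom})_Q$ implies lower Fibonacci growth on $\Upsilon^{(3)}$, i.e.\ Theorem~\ref{thm:lower_bound} itself. That is considerably more work than the three-face trick, and your sketch is incomplete at exactly the step you flag (``Granting this''): passing from exponential growth of neighbours along each individual face (Lemma~\ref{lem:neighbors}(3)) to global finiteness of $\Upsilon^{(3)}_{\psi}(K)$ requires the attracting-subtree machinery of Section~\ref{sec:att_tree} and the full Case~2 analysis of Theorem~\ref{thm:lower_bound}, not merely the pointwise exponential estimate.
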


\begin{proof}
  First, note that given a face $\xi = (\a, \b) \in \Upsilon^{(2)}$, we have that  $$|\Psi(\a)| \cdot |\Psi(\b)|-M \leq |\Psi(\xi)| \leq |\Psi(\a)| \cdot |\Psi(\b)|+M.$$ 
  
  Since $\psi \in ({\bf \Psi}_{\Uom})_{Q}$, we know that the set $\Upsilon^{(2)}_\psi(2+M)$ is finite, and so there is just a finite number of faces such that $|\psi(e)| \leq (M+2)^2+M$. We can then suppose $|ab-x| > (M+2)^2+M$, and so $|ab|> (M+2)^2$. In that case we have 
$$\frac{M+1}{M+2}|ab| \leq |ab-x| \leq \frac{M+3}{M+2}|ab|,$$ because 
$$|ab|-M = \frac{M+1}{M+2}|ab| + \frac{1}{M+2}|ab|- M > \frac{M+1}{M+2}|ab| + (M+2) - M > \frac{M+1}{M+2} |ab|,$$ and similarly
$$|ab|+M = \frac{M+3}{M+2}|ab| - \frac{1}{M+2}|ab|- M < \frac{M+3}{M+2}|ab|- (M+2) + M < \frac{M+3}{M+2} |ab|.$$  

Hence we can see that $$\log^{+}|\Psi(\a)| + \log^{+}|\Psi(\b)|-K \leq \log^{+}|\Psi(\xi)| \leq \log^{+}|\Psi(\a)| + \log^{+}|\Psi(\b)|+K',$$ and so, using Proposition \ref{prop:word}, we can conclude.

Conversely, we can see that 
\begin{equation}\label{eq1}
 \frac{M+2}{M+3}|ab-x| \leq |ab| \leq \frac{M+2}{M+1}|ab-x|. 
\end{equation}
Similarly, we have 
 \begin{equation}\label{eq2}
  \frac{M+2}{M+3}|ab-z| \leq |ac| \leq \frac{M+2}{M+1}|ac-z|, 
 \end{equation}
 and
 \begin{equation}\label{eq3}
  \frac{M+2}{M+3}|bc-y| \leq |bc| \leq \frac{M+2}{M+1}|bc-y|. 
 \end{equation}
So, using Proposition \ref{prop:word}, and adding Equations \eqref{eq1} and \eqref{eq2}, and subtracting \eqref{eq3}, we can conclude.
 \end{proof}

Another important meaning of these Fibonacci functions is explained by the following result, relating $F_e$ to the word length of the elements of $\Sc \subset \Gamma/\!\sim$. (See  Section \ref{ss:simple} for the definition of $\Gamma/\!\sim$.) In fact, we can write out explicit representatives in $\Gamma$ of elements of $\Gamma/\!\sim$ corresponding to given elements of $\Sc$, as follows.

Let $e = (\a, \b, \g; \d, \d')$. The regions $\a, \b, \g \in \Sc_1$ are represented, respectively, by a triple of free generators for $\Gamma$. Without loss of generality, we can suppose that $\d$ and $\d'$ are represented by $\a\b\g$ and $\a\g\b$, respectively, and that the faces $\xi_{\a, \b} = \a \cap \b \in \Sc_2$ are represented by $\a\b^{-1}$. We can now inductively give representatives for all other elements of $\Upsilon^{(2)}\cup \Upsilon^{(3)}$. Note that all the words arising in this way are cyclically reduced. Let $\mathrm{W}(\omega)$ denote the minimal cyclically reduced word length of element $\omega$ with respect to some generating set. Since it doesn't matter which generating set we choose, we may as well take it to be a free basis.  From this we deduce the following:

\begin{Proposition}\label{prop:word}
Suppose $a$, $b$ and $c$ are a set of free generators for $\Gamma$ corresponding to regions $\a_1$, $\a_2$ and $\a_3$. Let $e$ be the edge $e = \a_1 \cap \a_2 \cap \a_3$. If $\omega_X \in \hat\Omega$ correspond to $X\in \Upsilon^{(k)}$, where $k = 2$ or $k =3$, then $W(\omega_X) = F_e(X)$. 

In addition, for any face $\xi_{\a, \b} = \a \cap \beta$, where $\a, \b \in \Upsilon^{(3)}$, we have $F_e(\xi_{\a, \b}) = F_e(\a) + F_e(\b).$
\end{Proposition}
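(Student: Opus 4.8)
The plan is to prove both claims simultaneously by induction on the distance $d_e(X)$ from the reference edge $e$, using the recursive structure that defines $F_e$ and the explicit word representatives constructed just before the statement. First I would set up the base case: the three regions $\a_1,\a_2,\a_3$ around $e$ are represented by the free generators $a,b,c$, each of cyclically reduced word length $1 = F_e(\a_i)$; the three faces around $e$, namely $\xi_{\a_i,\a_j}$, are represented by words like $ab^{-1}$, $bc^{-1}$, $ac^{-1}$, each of cyclically reduced length $2 = F_e(\xi)$; this also verifies the additive formula $F_e(\xi_{\a,\b}) = F_e(\a)+F_e(\b)$ in the base case since $1+1 = 2$.

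The inductive step is the heart of the argument. Suppose $(\a,\b,\g,\d)\in\Upsilon^{(0)}$ with $\d$ strictly farther from $e$ than $\a,\b,\g$, and that $\a,\b,\g$ are represented by cyclically reduced words $w_\a, w_\b, w_\g$ of lengths $F_e(\a), F_e(\b), F_e(\g)$. By the combinatorics of the tree (the construction of representatives preceding the statement), the region $\d$ is represented by a word obtained as an appropriate product of (two of) $w_\a, w_\b, w_\g$ and their inverses — modelled on the base-case relation $\d \leftrightarrow \a\b\g$ — and one must check that this product word is already cyclically reduced and has length exactly $F_e(\a)+F_e(\b)+F_e(\g) = F_e(\d)$; no cancellation occurs because of the way the generators sit relative to the subtree decomposition $\Upsilon^{\pm}$. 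Similarly, a new face $\xi$ at that vertex is represented by a word of the form $w_\a w_\b^{-1}$-type products of the two representatives of the faces closer to $e$ that it is built from, giving length $F_e(\xi') + F_e(\xi'') = F_e(\xi)$; and the additive formula $F_e(\xi_{\a,\b}) = F_e(\a)+F_e(\b)$ is then read off directly from the shape of the representative $w_\a w_\b^{-1}$ (lengths add because the word is cyclically reduced). Since each side satisfies the same recursion with the same initial data, the two functions $W(\omega_X)$ and $F_e(X)$ coincide for all $X$.

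The main obstacle I anticipate is the no-cancellation claim: showing that the inductively defined representative words are genuinely cyclically reduced with no length drop, both at the juncture of the product and under cyclic rotation. This is exactly the place where one needs the precise combinatorial description of how the free-basis representatives are attached along the tree $\Upsilon$ (the remark in the excerpt that ``all the words arising in this way are cyclically reduced'' is doing real work here), and one has to argue that the first and last letters of the relevant factors never match up to cancel. I expect this to be handled by a careful bookkeeping of leading/trailing letters carried along as part of the induction hypothesis — essentially strengthening the inductive statement to record not just the length but the first and last letters of each $w_X$ — rather than by any deep new idea. Once that is in place, the equality $W(\omega_X) = F_e(X)$ and the additivity formula both drop out of the matching recursions.
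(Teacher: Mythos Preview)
Your proposal is correct and is exactly the argument the paper has in mind; in fact the paper gives no explicit proof at all, but simply says ``from this we deduce the following'' after noting that the inductively constructed representatives are cyclically reduced. Your plan to run an induction on $d_e(X)$, matching the recursions for $W(\omega_X)$ and $F_e(X)$ with identical initial data, and your identification of the no-cancellation step (tracking first and last letters through the induction) as the only substantive point, is precisely the content the paper is suppressing.
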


\subsubsection{Upper Fibonacci bound} 

In this section we will show that, for any $\Uom$-representation $\psi$, the function $\log^{+}|\psi|$ has an upper Fibonacci bound on $\Upsilon^{(3)}$ (and so on $\Upsilon^{(2)}$, by Proposition \ref{prop:fib_equivalence}).

\begin{Lemma}\label{lem:upper_bound}
If $\psi \in {\bf \Psi}_{\Uom}$, then $\log^{+}|\psi|$ has an upper Fibonacci bound on $\Upsilon^{(3)}$.%\cup \Upsilon^{(2)}
\end{Lemma}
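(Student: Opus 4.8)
The plan is to apply Corollary \ref{fib_cor_upper} to the function $f = \log^{+}|\psi|$ on $\Upsilon^{(3)}$. By that corollary, it suffices to establish a uniform additive estimate of the form
$$\log^{+}|\psi(\d)| \le \log^{+}|\psi(\a)| + \log^{+}|\psi(\b)| + \log^{+}|\psi(\g)| + c$$
for some constant $c = c(\Uom)$, whenever $\a, \b, \g, \d$ are the four regions meeting at a vertex $v \in \Upsilon^{(0)}$ (with $\d$ the new region, i.e. the one farthest from the reference edge). The constant $c$ will depend only on $M = M(\Uom)$, which is fine since $\Uom$ is fixed throughout.

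The estimate itself comes directly from the Edge Equation (ii) for $\Uom$-Markoff maps. If $\a, \b, \g, \d$ meet at $v$ and $\d'$ is the fourth region opposite $\d$ across the edge $e = \a\cap\b\cap\g$, then
$$\psi(\d) + \psi(\d') = \l_{i l}\psi(\a) + \l_{j l}\psi(\b) + \l_{k l}\psi(\g) - \psi(\a)\psi(\b)\psi(\g),$$
so that, using $|\l_{i j}| \le M$ for all $i,j$,
$$|\psi(\d)| \le |\psi(\d)| + |\psi(\d')| \le |\psi(\a)\psi(\b)\psi(\g)| + M\bigl(|\psi(\a)| + |\psi(\b)| + |\psi(\g)|\bigr).$$
First I would handle the trivial range: if all three of $|\psi(\a)|, |\psi(\b)|, |\psi(\g)|$ are $\le 1$, then the right-hand side is at most $1 + 3M$, so $\log^{+}|\psi(\d)| \le \log(1+3M)$ and the inequality holds with the first three terms dropped. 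Otherwise at least one of the three values, say WLOG $|\psi(\a)|$, exceeds $1$; then $M(|\psi(\a)|+|\psi(\b)|+|\psi(\g)|) \le 3M\,|\psi(\a)|\cdot\max\{1,|\psi(\b)|\}\cdot\max\{1,|\psi(\g)|\}$ up to a bounded factor, so the whole right-hand side is bounded by $(1+3M)\,\max\{1,|\psi(\a)|\}\max\{1,|\psi(\b)|\}\max\{1,|\psi(\g)|\}$. Taking $\log^{+}$ and using $\log^{+}(uvw) \le \log^{+}u + \log^{+}v + \log^{+}w$ for $u,v,w \ge 0$ yields
$$\log^{+}|\psi(\d)| \le \log^{+}|\psi(\a)| + \log^{+}|\psi(\b)| + \log^{+}|\psi(\g)| + \log(1+3M),$$
which is exactly the hypothesis of Corollary \ref{fib_cor_upper} with $c = \log(1+3M)$.

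There is no serious obstacle here; the only mild care needed is in the bookkeeping to convert the multiplicative-plus-additive bound on $|\psi(\d)|$ into a clean additive bound on $\log^{+}|\psi(\d)|$, which is handled by the case split above (the additive terms $M|\psi(\a)|$ etc.\ are absorbed into the product $|\psi(\a)\psi(\b)\psi(\g)|$ at the cost of an absolute multiplicative constant, once one of the values is bounded below). Applying Corollary \ref{fib_cor_upper} then gives a constant $\kappa$ with $\log^{+}|\psi(X)| \le \kappa F_e(X)$ for all $X \in \Upsilon^{(3)}$, which is the desired upper Fibonacci bound. By Proposition \ref{prop:fib_equivalence} the same then holds on $\Upsilon^{(2)}$, although that implication already requires $\psi \in ({\bf \Psi}_{\Uom})_Q$ and so is only invoked later.
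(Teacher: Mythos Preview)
Your argument has a genuine gap at the triangle-inequality step. From the Edge Equation you correctly get
\[
\psi(\d) + \psi(\d') = -\psi(\a)\psi(\b)\psi(\g) + \l_{il}\psi(\a) + \l_{jl}\psi(\b) + \l_{kl}\psi(\g),
\]
but this only bounds $|\psi(\d)+\psi(\d')|$, not $|\psi(\d)|+|\psi(\d')|$. Your displayed inequality
\[
|\psi(\d)| + |\psi(\d')| \le |\psi(\a)\psi(\b)\psi(\g)| + M\bigl(|\psi(\a)|+|\psi(\b)|+|\psi(\g)|\bigr)
\]
is simply false in general (take e.g.\ $d = 100$, $d' = -100$). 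What you actually obtain is
\[
|\psi(\d)| \le |\psi(\d')| + |\psi(\a)\psi(\b)\psi(\g)| + M\bigl(|\psi(\a)|+|\psi(\b)|+|\psi(\g)|\bigr),
\]
and now $|\psi(\d')|$ sits on the right-hand side with no a~priori control in terms of $|\psi(\a)|,|\psi(\b)|,|\psi(\g)|$. The region $\d'$ is a fifth region, not one of the four meeting at your vertex, so this estimate is not of the shape required by Corollary~\ref{fib_cor_upper}. (You also cannot assume $|\psi(\d')|\le|\psi(\d)|$: that would be an orientation statement depending on $\psi$, and for arbitrary $\psi\in{\bf \Psi}_{\Uom}$ there is no reason the arrow on $e$ points toward the reference edge.)

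The paper avoids this by using the \emph{Vertex Equation} instead of the Edge Equation. The vertex relation involves only $a,b,c,d$ and the fixed boundary parameters $x,y,z$; there is no $d'$. One then argues: if $|d|\le 2\max\{|a|,|b|,|c|\}$ the desired additive bound is immediate, and otherwise the vertex relation forces $|d|^2/4 \le |d|^2 - |a|^2 - |b|^2 - |c|^2$ to be dominated by one of finitely many monomials in $a,b,c,d,x,y,z$, each of which yields $\log^{+}|d| \le \log^{+}|a|+\log^{+}|b|+\log^{+}|c| + O_{\Uom}(1)$. That is the step your argument is missing.
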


\begin{proof}
We will follow the ideas of \cite{tan_gen}. Let $\a, \b, \g, \d \in \Upsilon^{(3)}$ be four regions that meet at a vertex, and let $a = \psi(\a)$, $b = \psi(\b)$, etc. We will prove that
\begin{equation}\label{eqn:upper_bound}
  \log^{+}|d| \le \log^{+}|a|  + \log^{+}|b| + \log^{+}|c| + \log 32,
\end{equation}
and then conclude using Corollary \ref{fib_cor_upper}.

If $|d| \leq 2 \max \{ |a|, |b|, |c| \}$, then \eqref{eqn:upper_bound} holds already. So we suppose $|d| > 2 \max \{ |a|, |b|, |c| \}$. Then, since $-abcd + x(ab+cd) + y (bc+ad) + z(ac+bd) +4 - x^2 -y^2 - z^2 -xyz = a^2 + b^2 + c^2 + d^2$, we have:
  \begin{align*}
  &|abcd|+ |x| |ab| + |x| |cd| + |y||bc| + |y| |ad| + |z| |ac| + |z||bd|+|4 - x^2 -y^2 - z^2 -xyz| \\%\phantom{{}\leq{}}
  &\ge |d|^2-|a|^2-|b|^2 -|c|^2\\
  &= |d|^2/4 + (|d|^2/4-|a|^2) + (|d|^2/4-|b|^2) + (|d|^2/4-|c|^2) \\
  &\ge |d|^2/4.
  \end{align*}

Hence $|d|^2 \le 32 m$, where $$m = \max\{|abcd|, |x| |ab|, |x| |cd|, |y||bc|, |y| |ad|, |z| |ac|, |z||bd|, |4 - x^2 -y^2 - z^2 -xyz|\}.$$ So, according to the value of $m$, we have, respectively:
\begin{enumerate}
  \item $|d|^2 \le 32 |abcd|$, so $|d| \le 32 |abc|$;
  \item $|d|^2 \le 32 |x||ab|$, so $|d| \le |d|^2 \le 32 |x||ab|$;
  \item $|d|^2 \le 32 |x||cd|$, so $|d| \le 32 |x||c|$;
  \item $|d|^2 \le 32 |4 - x^2 -y^2 - z^2 -xyz|$, so $|d| \le |d|^2 \le 32 |4 - x^2 -y^2 - z^2 -xyz|$;
\end{enumerate}
or a similar inequality.
Note that we have $|d| \le |d|^2$, since we may assume $|d| \ge 1$.

From this, Equation \eqref{eqn:upper_bound} follows easily.

\end{proof}

\subsubsection{Lower Fibonacci bound} 

In this section we will show that, for any $\Uom$-representation $\psi$ in the Bowditch set, the function $\log^{+}|\psi|$ will have a lower Fibonacci bound (and so Fibonacci growth) on $\Upsilon^{(3)}$. We will then use that to prove that $\log^{+}|\psi|$ has Fibonacci growth on $\Upsilon^{(2)}$.

\begin{Theorem}\label{thm:lower_bound}
If $\psi \in ({\bf \Psi}_{\Uom})_{Q}$, then $\log^{+}|\psi|$ has lower Fibonacci growth on $\Upsilon^{(3)}$.
\end{Theorem}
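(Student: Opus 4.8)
The goal is to establish a lower Fibonacci bound for $\log^+|\psi|$ on $\Upsilon^{(3)}$ when $\psi \in ({\bf \Psi}_{\Uom})_Q$. By Corollary \ref{fib_cor_upper} (the lower bound version) and the subsequent definitions, it suffices to produce a constant $c$ with $0 \le c < m$ for suitable $m$, and to show that the reverse triangle-type inequality
\begin{equation*}
  \log^+|d| \ge \log^+|a| + \log^+|b| + \log^+|c| - c
\end{equation*}
holds at every vertex $(\a,\b,\g,\d) \in \Upsilon^{(0)}$ at which $\d$ is the region \emph{farthest} from a fixed reference edge $e$ — except for the finitely many vertices meeting the finite subtree $T_\psi(2+M)$. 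The plan is to exploit the attracting subtree machinery: outside the finite tree $T = T_\psi(2+M)$ (which is finite by Lemma \ref{lem:tree}), all arrows point towards $T$, so at a vertex $v = (\a,\b,\g,\d)$ with $d_e(\d)$ maximal and $v \notin T$, the edge between $v$ and $\d$-side points \emph{away} from $\d$, i.e. towards the three ``older'' regions; this forces $|d| \ge |d'|$ where $d'$ is the Markoff-conjugate value, and combined with the fact that none of $\a,\b,\g,\d$ lies in $\Upsilon^{(3)}_\psi(2+M)$ (since they are outside the tree's controlled region) we get good lower bounds $|a|,|b|,|c|,|d| > 2+M$.

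The core computation is then to extract the multiplicative inequality $|d| \gtrsim |abc|$ from the vertex relation $d + d' = -abc + ay + bz + cx$ together with $|d| \ge |d'|$. Indeed $2|d| \ge |d + d'| + (|d| - |d'|) \ge |abc| - M(|a|+|b|+|c|)$, roughly; since $|a|,|b|,|c|$ are all $> 2+M$, the term $M(|a|+|b|+|c|)$ is dominated by $|abc|$ (e.g. $M(|a|+|b|+|c|) \le \tfrac{3M}{(2+M)^2}|abc| \le \tfrac{1}{2}|abc|$), giving $|d| \ge \tfrac{1}{4}|abc|$, hence $\log^+|d| \ge \log^+|a| + \log^+|b| + \log^+|c| - \log 4$ (after checking each factor is $\ge 1$, which holds since each is $> 2+M \ge 4$). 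I would take $c = \log 4$ — or whatever constant the bookkeeping produces — and note $m = \min\{\log^+|a_1|,\log^+|a_2|,\log^+|a_3|\} > \log(2+M) > \log 4 \ge c$ on the relevant (cofinite) set of regions, so the hypothesis $0 \le c < m$ of Lemma \ref{fib_bound}(2) is met (possibly after enlarging the reference edge's ``excluded'' finite set to absorb the vertices inside $T$).

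The main obstacle I anticipate is the careful handling of the finitely many ``bad'' vertices and the choice of reference edge: Lemma \ref{fib_bound}(2) requires the recursive inequality to hold for \emph{all} quadruples $(\a,\b,\g,\d)$ with $\d$ farthest from $e$, not just cofinitely many, whereas the attracting-tree argument only controls vertices outside the finite set $T_\psi(2+M)$. The resolution is to pick the reference edge $e$ inside $T$ (or to invoke the ``all but finitely many'' formulations in the Remark after the Definition of Fibonacci growth, since the notion of Fibonacci growth is asymptotic and independent of $e$): once $e$ is fixed, only finitely many vertices $v$ have all of $\a,\b,\g$ closer to $e$ than $\d$ while still lying in or adjacent to $T$, and on the complement the inequality above is valid. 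A secondary subtlety is verifying the arrow-direction claim carefully — that being ``outside $T$'' forces the orientation on the edge separating $v$ from its farthest region to point inward, hence $|d| \ge |d'|$ — but this is exactly the content of the ``$\psi$-attracting'' property established in the Proposition of Section \ref{sec:att_tree}, so it can be cited rather than reproved. Then one concludes via Lemma \ref{fib_bound}(2) that $\log^+|\psi|(X) \ge (m - c) F_e(X) + c$ for all but finitely many $X \in \Upsilon^{(3)}$, which is exactly the lower Fibonacci bound.
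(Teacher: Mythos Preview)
Your core computation is correct and matches the paper's Case 1: at a vertex outside the finite attracting tree $T_\psi(2+M)$ where all four regions satisfy $|a|,|b|,|c|,|d| > 2+M$, the edge relation together with $|d| \ge |d'|$ does give $|d| \gtrsim |abc|$, hence the recursive lower bound needed for Lemma \ref{fib_bound}(2).

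The gap is your claim that ``none of $\a,\b,\g,\d$ lies in $\Upsilon^{(3)}_\psi(2+M)$ (since they are outside the tree's controlled region)''. This is false, and not only at finitely many vertices. A region $\a \in \Upsilon^{(3)}$ is a tri-colored subtree (Remark \ref{bi-colored}) and touches \emph{infinitely many} vertices of $\Upsilon^{(0)}$. So if $|\psi(\a)| < 2+M$ --- and the BQ conditions do not exclude this; they only force $\Upsilon^{(3)}_\psi(2+M)$ to be a finite set of regions --- then $\a$ is one of the four regions at infinitely many vertices lying outside the finite tree $T$. At every such vertex your inequality can fail: when $|a|$ is small the dominant term $|abc|$ in the edge relation is no longer large compared with $M(|b|+|c|)$, and you cannot conclude $|d| \gtrsim |bc|$ (which is what you would need, since $\log^+|a|$ may vanish). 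Your proposed fix of absorbing the bad vertices into a finite exceptional set therefore does not work: the exceptional set is infinite.

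The paper handles exactly this situation as a separate Case 2. For an edge $\vec e \in C(T_\psi)$ with a small region $\a \in \Upsilon^{(3),0}(\vec e)$, it does \emph{not} try to verify the recursive inequality along $\partial\a$; instead it decomposes $\Upsilon^{(3),0-}(\vec e)$ into $\{\a\}$ together with the half-trees $\Upsilon^{(3),0-}(\eta_{i,k})$ hanging off the boundary of $\a$, and invokes Lemma \ref{lem:neighbors} (exponential growth of the neighbouring regions $\g_n,\d_n$ around a face with $\psi(\xi)\notin[-2,2]$ and $\sigma(\xi)\neq 0$) to get the lower Fibonacci bound on each piece. You need an analogous extra ingredient --- the growth estimate for regions bordering a fixed small region --- to close the argument.
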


\begin{proof}
From Lemma \ref{lem:tree}, we know that, if $\psi \in ({\bf \Psi}_{\Uom})_{Q}$, then there is a finite $\psi$--attracting subtree $T_\psi = T_\psi(2+M)$, where $M = \mathrm{max}\{|x|, |y|, |z|\}$.  Remember that $C(T_\psi)$ is the circular set of directed edges given by $T_\psi$. Note that $\Upsilon_\psi^{(3)}(2+M) = \cup_{\vec{e} \in C(T_\psi)}\Upsilon^{(3),0-}(\vec{e})$. So it suffices to prove that for each edge $\vec{e}$ in $C(T_\psi)$, $\log^{+}|\psi|$ has lower Fibonacci growth on $\Upsilon^{(3), 0-}(\vec{e})$. %Let $v$ be the head of $\vec{e}$. 

\vskip 3pt
\underline{\textit{Case 1}:} $\Upsilon^{(3), 0-}(\vec{e}) \cap \Upsilon_\psi(2+M) = \emptyset$.

Let $c = \mathrm{min}\{\frac{\psi(\alpha)}{2} | \alpha \in \Upsilon^{(3), 0}(\vec{e}) \}$. Then $c>0$, and for $\a\in \Upsilon^{(3), 0}(\vec{e})$, we have $\frac{\psi(\alpha)}{2}>c = c F_e(\alpha)$.

Now let $\alpha \in \Upsilon^{-}(\vec{e})$, and let $\b, \g, \d, \a' \in \Upsilon^{(3)}$ having shorter distance from $e$, and such that $(\a, \b, \g, \d)\in \Upsilon^{(0)}$ and $(\a', \b, \g, \d)\in \Upsilon^{(0)}$. We will prove that
\begin{equation}\label{eqn:upper}
  \log^{+}|a| \ge \log^{+}|b|  + \log^{+}|c| + \log^{+}|d| - c,
\end{equation}
and then conclude using Lemma \ref{fib_bound} (ii).

If $|a| \geq 2 \min \{  |b|, |c|, |d| \}$, then \eqref{eqn:upper} holds already. So we suppose $|a| > 2 \max \{ |b|, |c|, |d| \}$ Then, using the edge relation, and the fact that $|a'| \le |a|$, we have that $$|bcd|\le |x||b| + |z||c| + |y||d|+ |a| + |a'|\leq M(|b| + |c| + |d|) + 2|a|\le (3M+2)|a|,$$ so we have $$\log^{+}|a| \ge \log^{+}|b|  + \log^{+}|c| + \log^{+}|d| - \log(3M+2).$$

\vskip 3pt

\underline{\textit{Case 2}:} $\Upsilon^{(3), 0-}(\vec{e}) \cap \Upsilon_\psi(2+M) \neq \emptyset$.

First notice that $\Upsilon^{-}(\vec{e}) \cap \Upsilon_\psi(2+M) = \emptyset$, and $\Upsilon^{(3), 0}(\vec{e}) \cap \Upsilon_\psi(2+M) = \{\a\}$. The reason is that $\xi \in C(T_\psi)$. 

\begin{figure}
[hbt] \centering
\includegraphics[height=5.5 cm]{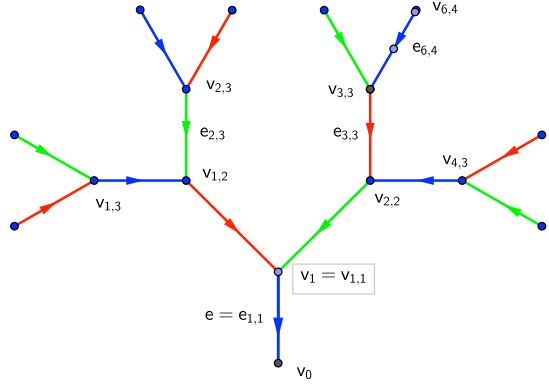}
\caption{The labelling of the vertices of $\a$ intersecting $\Upsilon^{-}$.}
\label{fig:region}
\end{figure}

Now, let $v=v_0$ and $v_1 = v_{1,0}$ be the head and the tail of $e$, and let $v_{i, k}$ where $i = 1, \ldots, 2^{k-1}$, and $k\geq 0$ be a (choice of a) labelling of the vertices of $\a$ intersecting $\Upsilon^{-}$, so that the distance $d(v_{i, k}, e) = k$. Let $e_{i, k}$ be the edge of $\a$ intersecting $\Upsilon^{-}$ and with tail $v_{i, k}$, and let $\eta_{i, k}$ be the other (oriented) edge incident at $v_{i,k}$ and which don't belong to $\a$. See Figure \ref{fig:region}. %

Note that $\Upsilon^{(3), 0-}(\vec{e}) = \{\alpha\} \cup (\bigcup_{k \geq 1} \bigcup_{i = 1}^{2^k} \Upsilon^{(3), 0-}(\eta_{i, k}))$. In addition, by Lemma \ref{lem:neighbors}, the regions in $\Upsilon^{(3), 0-}(\eta_{i, k})$ have exponential growth, and so we can conclude by Lemma \ref{fib_bound} (ii).
\end{proof}

Using Lemma \ref{lem:upper_bound}, Theorem \ref{thm:lower_bound} and Proposition \ref{prop:fib_equivalence}, we have now the following result.

\begin{Corollary}\label{cor:fib_growth}
If $\psi \in ({\bf \Psi}_{\Uom})_{Q}$, then $\log^{+}|\psi|$ has Fibonacci growth on $\Upsilon^{(2)}$ and $\Upsilon^{(3)}$.
\end{Corollary}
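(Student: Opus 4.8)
The plan is to assemble the statement directly from the three ingredients just established, so the proof is essentially a bookkeeping argument. The Corollary asserts that for $\psi \in ({\bf \Psi}_{\Uom})_Q$ the function $\log^{+}|\psi|$ has Fibonacci growth on both $\Upsilon^{(3)}$ and $\Upsilon^{(2)}$. Recall that \emph{Fibonacci growth} means having both an upper and a lower Fibonacci bound. First I would settle $\Upsilon^{(3)}$: Lemma \ref{lem:upper_bound} gives the upper Fibonacci bound for \emph{any} $\psi \in {\bf \Psi}_{\Uom}$, and in particular for our $\psi \in ({\bf \Psi}_{\Uom})_Q$; Theorem \ref{thm:lower_bound} gives the lower Fibonacci bound precisely under the hypothesis $\psi \in ({\bf \Psi}_{\Uom})_Q$. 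Combining the two yields Fibonacci growth of $\log^{+}|\psi|$ on $\Upsilon^{(3)}$.

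Next I would transfer this to $\Upsilon^{(2)}$. Here the tool is Proposition \ref{prop:fib_equivalence}, which states that for $\psi \in ({\bf \Psi}_{\Uom})_Q$ the function $\log^{+}|\psi|$ has Fibonacci growth on $\Upsilon^{(3)}$ if and only if it has Fibonacci growth on $\Upsilon^{(2)}$. Since we have just verified the left-hand side, the equivalence immediately delivers Fibonacci growth on $\Upsilon^{(2)}$ as well, completing the proof. Concretely, the write-up is: ``By Lemma \ref{lem:upper_bound}, $\log^{+}|\psi|$ has an upper Fibonacci bound on $\Upsilon^{(3)}$, and by Theorem \ref{thm:lower_bound} it has a lower Fibonacci bound on $\Upsilon^{(3)}$; hence it has Fibonacci growth on $\Upsilon^{(3)}$. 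By Proposition \ref{prop:fib_equivalence}, it therefore also has Fibonacci growth on $\Upsilon^{(2)}$.''

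There is no real obstacle here — all the work has been done in the preceding sections — so the only thing to be careful about is making sure the hypotheses line up: Lemma \ref{lem:upper_bound} needs only $\psi \in {\bf \Psi}_{\Uom}$ (satisfied), Theorem \ref{thm:lower_bound} and Proposition \ref{prop:fib_equivalence} need $\psi \in ({\bf \Psi}_{\Uom})_Q$ (our standing hypothesis). One should also note that ``lower Fibonacci growth'' in the statement of Theorem \ref{thm:lower_bound} is just the lower Fibonacci bound in the sense of the Definition preceding it, so there is no mismatch of terminology to reconcile. Thus the proof is a one-line chaining of three previously proved results.

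\begin{proof}
By Lemma \ref{lem:upper_bound}, the function $\log^{+}|\psi|$ has an upper Fibonacci bound on $\Upsilon^{(3)}$, since $\psi \in ({\bf \Psi}_{\Uom})_{Q} \subset {\bf \Psi}_{\Uom}$. By Theorem \ref{thm:lower_bound}, $\log^{+}|\psi|$ has a lower Fibonacci bound on $\Upsilon^{(3)}$. Hence $\log^{+}|\psi|$ has Fibonacci growth on $\Upsilon^{(3)}$. By Proposition \ref{prop:fib_equivalence}, $\log^{+}|\psi|$ then also has Fibonacci growth on $\Upsilon^{(2)}$.
\end{proof}
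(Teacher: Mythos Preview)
Your proof is correct and follows exactly the same approach as the paper, which simply cites Lemma~\ref{lem:upper_bound}, Theorem~\ref{thm:lower_bound}, and Proposition~\ref{prop:fib_equivalence} to deduce the corollary. There is nothing to add.
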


\subsection{Characterization of BQ-set} \label{s:characterization}

We can now state two different characterizations for the representations in the Bowditch set, and we will use the result from the previous section to prove that the two definitions are equivalent.

First, let's see the equivalence with the definition of the Bowditch set given in the Introduction.

\begin{Proposition}\label{pro:equiv_def}
  A map $\psi \in ({\bf \Psi}_{\Uom})_{Q}$ if and only if the two conditions are satisfied:
  \begin{enumerate}
  	\item[(BQ1)] For all $\xi \in \Upsilon^{(2)}$, we have $\psi(\xi) \notin [-2 , 2 ]$.
  	\item[(BQ2)] For all $K>0$, the set $\Upsilon^{(2)} (K)$ is finite.
  \end{enumerate}
\end{Proposition}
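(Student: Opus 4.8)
The plan is to show the equivalence by establishing two implications, with the forward direction being straightforward bookkeeping and the reverse direction requiring the Fibonacci growth machinery developed in the previous section.

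\textbf{The easy direction: (BQ1)$\wedge$(BQ2)$\wedge$(BQ3)$\wedge$(BQ4) $\Rightarrow$ (BQ1)$\wedge$(BQ2).} Here (BQ1) is literally the same condition, so nothing is needed. For (BQ2), I would argue as follows. Suppose $\psi$ satisfies the BQ-conditions but (BQ2) fails, i.e. there is some $K>0$ with $\Upsilon^{(2)}(K)$ infinite; we may take $K \geq 2+M$. By Corollary \ref{cor:fib_growth}, $\log^{+}|\psi|$ has Fibonacci growth on $\Upsilon^{(2)}$, hence in particular a lower Fibonacci bound: there is $\kappa > 0$ with $\log^{+}|\psi(\xi)| \geq \kappa F_e(\xi)$ for all but finitely many $\xi \in \Upsilon^{(2)}$. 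But $F_e\co\Upsilon^{(2)}\to\Z_{\geq 0}$ is a proper function — only finitely many faces have $F_e(\xi)$ below any given bound, since the Fibonacci values grow without bound along every ray in the tree. Therefore $\{\xi \mid \log^{+}|\psi(\xi)| \leq \log^{+} K\}$ is finite, contradicting the assumption that $\Upsilon^{(2)}(K)$ — which is contained in this set up to the requirement on the adjacent regions — is infinite. Actually, one must be slightly careful since $\Upsilon^{(2)}(K)$ also imposes $|\psi(\xi)| < K^2 + M$; but this only makes the set smaller, so finiteness still follows. This gives (BQ2).

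\textbf{The hard direction: (BQ1)$\wedge$(BQ2) $\Rightarrow$ (BQ1)$\wedge$(BQ3)$\wedge$(BQ4).} Again (BQ1) is immediate. For (BQ4): the set $\Upsilon^{(2)}_\psi(2+M)$ is a subset of $\Upsilon^{(2)}(2+M)$ which is finite by (BQ2), so (BQ4) holds. The real content is (BQ3): we must show that (BQ1) together with (BQ2) forces $\sigma(\xi)\neq 0$ for all $\xi\in\Upsilon^{(2)}$. I would argue by contraposition: suppose $\sigma(\xi) = 0$ for some face $\xi$. If $\psi(\xi)\in[-2,2]$ then (BQ1) already fails, so assume $\psi(\xi)\notin[-2,2]$. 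Now I invoke the unnamed lemma just before Section \ref{sec:att_tree} (the one stating: if $\sigma(\xi)=0$ and $K>2+M$, then either $\xi\in\Upsilon^{(2)}(K)$ or $\Upsilon^{(2)}(K)$ is infinite). If for some $K>2+M$ the set $\Upsilon^{(2)}(K)$ is infinite, (BQ2) fails and we are done. Otherwise $\xi\in\Upsilon^{(2)}(K)$ for every $K>2+M$; but this forces $|\psi(\xi)| < K^2+M$ for all such $K$ — no contradiction there — so I instead use the stronger structural information. By Lemma \ref{lem:neighbors}(4), since $\sigma(\xi)=0$ and $\psi(\xi)\notin[-2,2]$, the neighboring sequences $(c_n,d_n)$ around $\xi$ converge to $(\eta(ab-x),\zeta(ab-x))$ as $n\to+\infty$ or $n\to-\infty$; in particular infinitely many neighboring regions $\g_n$ (or $\d_n$) have $|\psi|$ bounded by some fixed $K_0 > 2+M$. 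Each such region, being in $\Upsilon^{(3)}(K_0)$, is adjacent to $\xi$, so the faces $\xi_{\g_n,\g_{n+1}}$ lie in $\Upsilon^{(2)}(K_0)$ (using the remark that two adjacent regions in $\Upsilon^{(3)}(K)$ give a face in $\Upsilon^{(2)}(K)$, possibly after enlarging $K_0$ slightly). These are infinitely many distinct faces, so $\Upsilon^{(2)}(K_0)$ is infinite, contradicting (BQ2). Hence $\sigma(\xi)\neq 0$, establishing (BQ3).

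\textbf{Main obstacle.} The subtle point is the (BQ3) argument: one must extract from $\sigma(\xi)=0$ (via the convergence in Lemma \ref{lem:neighbors}(4)) an infinite family of \emph{distinct} small-valued faces, being careful about which constant $K$ to use and whether adjacency of a region to $\xi$ genuinely places the relevant face into $\Upsilon^{(2)}(K)$ for that same $K$. A cleaner route, which I would likely adopt in the writeup, is to simply cite the unnamed lemma before Section \ref{sec:att_tree} directly: it already packages exactly the dichotomy ``$\xi\in\Upsilon^{(2)}(K)$ or $\Upsilon^{(2)}(K)$ infinite'', and one then only needs to rule out $\xi\in\Upsilon^{(2)}(K)$ for all $K$ being compatible with (BQ1)$\wedge$(BQ2) — which, combined with the convergence of $(c_n,d_n)$ and hence the accumulation of infinitely many bounded-value faces along $\xi$, yields the infinitude of some $\Upsilon^{(2)}(K)$ regardless. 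The forward direction's only mild subtlety is confirming that $F_e$ is proper on $\Upsilon^{(2)}$, which follows from its defining recursion (values are non-decreasing along rays away from $e$ and strictly increase past the initial neighborhood, since each new face's value is a sum of two positive earlier values at least one of which is $\geq 2$).
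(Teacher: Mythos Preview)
Your proof is correct and follows essentially the same strategy as the paper: the forward direction invokes Corollary~\ref{cor:fib_growth} (Fibonacci growth on $\Upsilon^{(2)}$) to get finiteness of every $\Upsilon^{(2)}(K)$, and the reverse direction uses Lemma~\ref{lem:neighbors}(4) to show that $\sigma(\xi)=0$ forces infinitely many bounded neighbors around $\xi$, hence an infinite $\Upsilon^{(2)}(K_0)$. One small slip: in the last step you name the faces $\xi_{\gamma_n,\gamma_{n+1}}$, but $\gamma_n$ and $\gamma_{n+1}$ are not adjacent (the vertices along $\xi$ are $(\alpha,\beta,\gamma_n,\delta_n)$ and $(\alpha,\beta,\gamma_{n+1},\delta_n)$); use $\xi_{\gamma_n,\delta_n}$ instead, which works since both $(c_n)$ and $(d_n)$ converge.
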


\begin{proof}
	If $\psi \in ({\bf \Psi}_{\Uom})_{Q}$ then, by Corollary \ref{cor:fib_growth}, the map $\log^{+} |\psi|$ has Fibonacci growth on $\Upsilon^{(2)}$, which means that for all $K > 0$, the set $\Upsilon^{(2)} (K)$ is finite.
	
	Reciprocally, if $\psi$ is not in $({\bf \Psi}_{\Uom})_{Q}$, then at least one of the three possibilities occurs:
	\begin{enumerate}[(i)]
		\item There exists $\xi \in \Upsilon^{(2)}$, such that $\psi (\xi) \in [-2 , 2 ]$.
		\item The set $\Upsilon^{(2)} (2+M)$ is infinite. 
		\item There exists $\xi \in \Upsilon^{(2)}$, such that $\sigma (\xi) = 0$. 
	\end{enumerate}
	
	Then $(i)$ contradicts condition $(BQ1)$, while $(ii)$ contradicts condition $(BQ2)$. Finally, in case $(iii)$, the neighbors around the face $\xi$ will converge to some finite value. Which means that there exists $K>0$ such that $\Upsilon^{(2)} (K)$ is infinite, contradicting the condition $(BQ2)$.
\end{proof}

\begin{Remark}\label{rk:sigma}
	Let $\rho$ be a representation  and $\psi$ the corresponding Markoff map. There is an equivalence between the conditions 
	\begin{itemize}
		\item There exists $\xi \in \Upsilon^{(2)}$, such that $\sigma_\psi (\xi) = 0$. 
		\item There exists an embedded subsurface $S \subset N_{1,3}$ such that $\chi(S) = -1$ and the restriction $\rho|_{\pi_1 (S)} $ is reductive. 
	\end{itemize}
\end{Remark}

\begin{proof}
  If $\a$ and $\b$ are two curves such that $\xi = \xi_{\a, \b}$, then the curve $\a \b^{-1}$ splits $N$ into a three-holed sphere  $S'$ and a two holed projective plane $N'$. The fundamental group $\pi_1 (N')$ is generated by $\{ \a , \b \}$ and hence the restriction of $\rho$ to $\pi_1 (N')$ is reductive if and only if 
	$$\Tr ([\a , \b ]) = a^2 + b^2 + x^2 - abx -2 = 2.$$
	
	Similarly, the fundamental group $\pi_1 (S')$ is generated by two elements $Y$ and $Z$ which are conjugated to the boundaries of $N$ and such that $YZ = \a \b^{-1}$. The restriction of $\rho$ to $\pi_1 (S')$ is reductive if and only if 
	$$\Tr ([ Y , Z ]) = y^2 + z^2 + \psi (\xi)^2 - yz \psi(\xi) - 2 = 2.$$
	As $\sigma_{\psi} (\xi) = (\Tr( [\a , \b])-2)(\Tr ([Y , Z ]) - 2)$, we get the equivalence directly.
\end{proof}

The following second correspondence is particularly interesting because it could be used to generalize the notion of Bowditch representations for general surfaces. Given $\rho \in \mathfrak{X}$, we can define a function $L_\rho = L(\rho(\cdot)) \co \Sc \to \C$, where $\Sc = \Sc(N)$ is the set of simple closed curve of $N = N_{1,3}$ by $$\Tr(\rho(\g)) = 2 \mathrm{cosh}(\mathrm{L}(\rho(\g))/2).$$
Note that $\log|\Tr(\rho(\g))| \leq \mathrm{W}(\rho(\g))$, see for example Bowditch \cite{bow_mar}. Recall that $\mathrm{W}$ denote the minimal cyclically reduced word length with respect to some generating set. 

\begin{Proposition}\label{pr:growth}
  The following sets are equal:
  \begin{enumerate}
    \item the Bowditch set $(\mathfrak{X}_{\Uom})_Q$;
    \item $\{\rho \in \mathfrak{X}_{\Uom} \mid \exists k = k(\rho), \; |\mathrm{L}(\rho(\g))| \geq k \mathrm{W}(\g)\;\;\forall \g \in \Sc \};$
    \item $\{\rho \in \mathfrak{X}_{\Uom} \mid \exists k = k(\rho), \; |\mathrm{L}(\rho(\g))| \geq k \mathrm{W}(\g) \;\;\forall \g \in \Sc_1 \};$
    \item $\{\rho \in \mathfrak{X}_{\Uom} \mid \exists k = k(\rho), \; |\mathrm{L}(\rho(\g))| \geq k \mathrm{W}(\g) \;\;\forall \g \in \Sc_2 \}.$
  \end{enumerate}
\end{Proposition}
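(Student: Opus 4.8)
The key tool is Corollary~\ref{cor:fib_growth}, which says that for $\psi \in ({\bf \Psi}_{\Uom})_Q$ the function $\log^{+}|\psi|$ has Fibonacci growth on both $\Upsilon^{(2)}$ and $\Upsilon^{(3)}$, together with Proposition~\ref{prop:word}, which identifies the Fibonacci function $F_e$ with the minimal cyclically reduced word length $\mathrm{W}$ of the corresponding curve. I would organize the proof around the chain of inclusions
$$(1) \subseteq (2) \subseteq (3), \qquad (1) \subseteq (2) \subseteq (4), \qquad (3) \Rightarrow (1), \qquad (4) \Rightarrow (1),$$
noting that the inclusions $(2)\subseteq(3)$ and $(2)\subseteq(4)$ are trivial since $\Sc_1, \Sc_2 \subseteq \Sc$. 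So the real content is: (a) $(1)\subseteq(2)$, and (b) each of $(3)$ and $(4)$ implies $(1)$.

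\emph{Step (a): the Bowditch set is contained in set (2).}
Suppose $\rho \in (\mathfrak{X}_{\Uom})_Q$ and let $\psi$ be the corresponding Markoff map. By Corollary~\ref{cor:fib_growth}, $\log^{+}|\psi|$ has a \emph{lower} Fibonacci bound on $\Upsilon^{(2)}\cup\Upsilon^{(3)}$: there is $\kappa>0$ and $C>0$ with $\log^{+}|\psi(X)| \geq \kappa F_e(X) - C$ for all $X$. Now every $\g\in\Sc$ is either a $1$--sided curve, hence an element of $\Upsilon^{(3)}$, or a $2$--sided curve, hence an element of $\Upsilon^{(2)}$ (Remark~\ref{2sided} and the description of $\Upsilon^{(k)}$). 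By Proposition~\ref{prop:word}, $F_e(X) = \mathrm{W}(\omega_X) = \mathrm{W}(\g)$ for the corresponding curve $\g$. Using $\log|\Tr\rho(\g)| \le \mathrm{W}(\g)$ in the other direction together with $\Tr(\rho(\g)) = 2\cosh(\mathrm{L}(\rho(\g))/2)$, one has $|\mathrm{L}(\rho(\g))|$ comparable to $\log^{+}|\Tr\rho(\g)|$ up to additive and multiplicative constants (as long as $\mathrm{W}(\g)$ is large, i.e. outside a finite set), so the lower Fibonacci bound translates into $|\mathrm{L}(\rho(\g))| \ge k\,\mathrm{W}(\g)$ for a suitable $k = k(\rho) > 0$ and \emph{all} $\g\in\Sc$ (shrinking $k$ to absorb the finitely many short curves). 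This gives $\rho \in (2)$.

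\emph{Step (b): set (3) or set (4) is contained in the Bowditch set.}
This is the direction where one must recover the BQ-conditions from a growth estimate, and I expect it to be the main obstacle. Suppose $\rho \notin (\mathfrak{X}_{\Uom})_Q$; by the trichotomy in the proof of Proposition~\ref{pro:equiv_def}, one of three things happens: (i) some $\xi\in\Upsilon^{(2)}$ has $\psi(\xi)\in[-2,2]$; (ii) $\Upsilon^{(2)}(2+M)$ is infinite; (iii) some $\xi$ has $\sigma(\xi)=0$. In case (i), the curve corresponding to $\xi$ is $2$--sided with $|\Tr\rho(\xi)|\le 2$, so $|\mathrm{L}(\rho(\xi))|$ is purely imaginary / of bounded modulus while $\mathrm{W}(\xi)$ can be arbitrarily large along the bi-colored geodesic through $\xi$ (Remark~\ref{bi-colored}); more precisely, by Lemma~\ref{lem:neighbors}(1)--(2) the neighbors of $\xi$ stay bounded or grow only quadratically, so no linear lower bound in terms of $\mathrm{W}$ can hold—this breaks (4), hence also (3) via... wait, one needs the failure to occur already on $\Sc_1$ or on $\Sc_2$ specifically. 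For (4): cases (i), (ii), (iii) all produce, via Lemma~\ref{lem:neighbors} and Lemma~\ref{lem:escaping}, an infinite family of $2$--sided faces $\xi_n$ with $|\psi(\xi_n)|$ bounded while $F_e(\xi_n) = \mathrm{W}(\xi_n) \to \infty$, contradicting the linear lower bound along $\Sc_2$. For (3): by the face equation $\psi(\xi_{\a,\b}) = \psi(\a)\psi(\b) - \l_{ij}$, a bounded family of faces $\xi_n$ forces a bounded family of $1$--sided regions $\a_n$ (after possibly passing to the regions adjacent to the $\xi_n$ with $|\psi(\a_n)|\le\sqrt{|\psi(\xi_n)|+M}$), again with $\mathrm{W}(\a_n)\to\infty$, contradicting the linear lower bound along $\Sc_1$. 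So $(3)\Rightarrow(1)$ and $(4)\Rightarrow(1)$, and combined with $(1)\subseteq(2)\subseteq(3)$ and $(1)\subseteq(2)\subseteq(4)$ all four sets coincide. The delicate point—and where I would spend the most care—is making the passage from "a bounded family of faces" to "a bounded family of $1$--sided curves with unbounded word length" rigorous in case (iii) and case (i), since there the face values are bounded but not necessarily \emph{small}, so one has to invoke the quantitative estimates of Lemma~\ref{lem:neighbors} and the structure of $J_\psi(K,\xi)$ to locate infinitely many nearby regions of controlled size.
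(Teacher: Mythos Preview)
Your proof follows essentially the same strategy as the paper: use Corollary~\ref{cor:fib_growth} and Proposition~\ref{prop:word} for $(1)\subseteq(2)$, the trivial inclusions $(2)\subseteq(3),(4)$, and the contrapositive for the reverse implications. Two differences in the details are worth noting. First, for $(4)\Rightarrow(1)$ the paper does not return to the three-case trichotomy (i)/(ii)/(iii) from the \emph{definition} of $({\bf \Psi}_{\Uom})_Q$; instead it uses the already-proved two-condition characterization (BQ1)+(BQ2) of Proposition~\ref{pro:equiv_def}, so one only has to argue that failure of (BQ1) or of (BQ2) yields an infinite family in $\Sc_2$ with bounded trace and unbounded word length---this is shorter and avoids having to revisit the $\sigma(\xi)=0$ case separately. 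Second, for $(3)\Rightarrow(1)$ the paper does not attempt your face-equation extraction of bounded $1$--sided regions from bounded faces (your ``delicate point''); instead, having already established $(\mathfrak{X}_{\Uom})_Q=\mathfrak{S}=\mathfrak{S}_2$, it simply invokes Proposition~\ref{prop:fib_equivalence} to transfer the Fibonacci-growth conclusion between $\Upsilon^{(2)}$ and $\Upsilon^{(3)}$. Your concern about that passage is legitimate---in particular the edge case where the bounded region $\alpha_n$ is constant with $\psi(\alpha)=0$ needs separate treatment before your $\sqrt{|\psi(\xi_n)|+M}$ bound yields regions with unbounded word length---and you should be aware that the paper's one-line appeal to Proposition~\ref{prop:fib_equivalence} is itself rather terse on exactly this point.
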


\begin{proof}
Let $\mathfrak{S}, \mathfrak{S}_1, \mathfrak{S}_2$ the sets defined in $(2), (3), (4)$ of Proposition \ref{pr:growth}, respectively. Note that $\mathfrak{S} \subset \mathfrak{S}_1$ and $\mathfrak{S} \subset \mathfrak{S}_2$.  
  
Given $\rho \in (\mathfrak{X}_{\Uom})_Q$, let $\psi \in ({\bf \Psi}_{\Uom})_{Q}$ be the corresponding ${\Uom}$--Markoff map. Proposition \ref{prop:word} tells us that, for the appropriate edge $e$, we have $W(\gamma) = F_e(\g)$, if $\g \in \Upsilon^{(2)}\cup \Upsilon^{(3)}$. By Corollaries \ref{cor:fib_growth} there is a constant $k > 0$ such that $\log^+|\psi(X)| \geq k F_e(X)$ for all $X \in \Upsilon^{(2)}\cup \Upsilon^{(3)}$. Recall that $\psi(X) = \Tr(X)$, so, from the inequality $\log^+|\Tr(A)| \leq \mathrm{L}(A)$, we can see that $\mathrm{L}(\rho(\g)) \geq \log^+|\psi(X)| \geq k F_e(X) = k W(\g)$. This proves that $(\mathfrak{X}_{\Uom})_Q \subset \mathfrak{S}$, and hence $(\mathfrak{X}_{\Uom})_Q \subset \mathfrak{S}_1$ and $(\mathfrak{X}_{\Uom})_Q \subset \mathfrak{S}_2$.

  Conversely, given $\rho \in \mathfrak{S}_2$, we can prove that:
  \begin{itemize}
    \item[(BQ1)] For all $\g \in \Sc_2$, we have $\Tr(\rho(\g)) \notin [-2,2]$;
    \item[(BQ2)] For all $K>0$, the set $\{\g \in \Sc_2 \mid |\Tr(\rho(\g))| < K\}$ is finite.
  \end{itemize}
 In fact, if there exist $\g \in \Sc_2$ such that $\Tr(\rho(\g)) \in [-2, 2]$, then $\rho \notin \mathfrak{S}_2$, because we can find a sequence of elements in $\Sc_2$ such that the word length increases, but the length remains bounded.
 On the other hand, if there exist $K >0$ such that $\{\g \in \Sc_2 \mid |\Tr(\rho(\g))| < K\}$ is infinite, then, again $\rho \notin \mathfrak{S}_2$, because for any $K >0$ there is only a finite number of elements with word length less or equal $K$. This proves $\mathfrak{S}_2 \subset (\mathfrak{X}_{\Uom})_Q$, so $(\mathfrak{X}_{\Uom})_Q = \mathfrak{S} = \mathfrak{S}_2$. Now using Proposition \ref{prop:fib_equivalence}, we can also conclude that $\mathfrak{S} = \mathfrak{S}_1 = \mathfrak{S}_2 = (\mathfrak{X}_{\Uom})_Q$. 
\end{proof}

An easy corollary of this result is the inclusion $\X_{\mathrm{ps}} \subset (\mathfrak{X}_{\Uom})_Q$. Indeed, the set $\X_{\mathrm{ps}}$ corresponds to representations such that the axes corresponding to primitive elements are uniform quasi-geodesics, that is,
$$ \X_{\mathrm{ps}} = \{\rho \in \X \mid \exists k = k(\rho), \mathrm{L}(\rho(\g_1^{-1}\g_2))| \geq k \mathrm{W}(\g_1^{-1}\g_2)\;\;\forall \g_1, \g_2 \in \mathrm{Ax}(\g), \g \; \text{primitive}\},$$
where $\mathrm{Ax}(\g)$ is the axis passing through the identity $e$. Now, taking $\g_1 = e$, $\g_2 = \g$ and noticing that all elements in $\Sc$ are primitive, we can prove the above inclusion. To see that this inclusion is proper, we can note that the holonomy of an hyperbolic structure with punctures at the three boundary components gives a representation in $(\mathfrak{X}_{\Uom})_Q$, but not in $\X_{\mathrm{ps}}$. So we have the following:

\begin{Proposition}\label{pr:primitivestable}
  $\X_{\mathrm{ps}} \subsetneq (\mathfrak{X}_{\Uom})_Q.$
\end{Proposition}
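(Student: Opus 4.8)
The plan is to prove the two halves of the statement separately: first the inclusion $\X_{\mathrm{ps}} \subseteq (\mathfrak{X}_{\Uom})_Q$, and then its strictness. For the inclusion I would argue as in the discussion preceding the statement, via the characterization in Proposition~\ref{pr:growth}. Fixing the boundary traces $\Uom$ (so that $\X_{\mathrm{ps}}$ is read as $\X_{\mathrm{ps}}\cap\mathfrak{X}_{\Uom}$), I would apply the quasi-geodesic inequality defining primitive stability with $\g_1 = e$ and $\g_2 = \g$; this produces, for each $\rho\in\X_{\mathrm{ps}}$, a constant $k=k(\rho)>0$ with $|\mathrm{L}(\rho(\g))|\ge k\,\mathrm{W}(\g)$ for every primitive $\g\in\Gamma$. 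Since every essential simple closed curve on $N$, one-sided or two-sided, is primitive in $\Gamma$ (a homeomorphism of $N$ carries it to one of the standard generators), this inequality holds for all $\g\in\Sc$, which is precisely membership in set $(2)$ of Proposition~\ref{pr:growth}; hence $\rho\in(\mathfrak{X}_{\Uom})_Q$.

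To prove strictness I would exhibit an explicit element of $(\mathfrak{X}_{\Uom})_Q\setminus\X_{\mathrm{ps}}$. Since $\chi(N_{1,3})=-2<0$, the surface $N$ carries a complete finite-area hyperbolic structure in which all three boundary components are cusps; let $\rho$ be its holonomy. Then $\rho$ is discrete and faithful, the boundary traces $x,y,z$ all equal $\pm2$, and $\rho\in\mathfrak{X}_{\Uom}$ with $\Uom=(\pm2,\pm2,\pm2)$ and $M=2$. First I would check that $\rho$ is \emph{not} primitive stable: the boundary element $\a\b$ is primitive in $\Gamma$ (the assignment $\a\mapsto\a\b^{-1},\ \b\mapsto\b,\ \g\mapsto\g$ is an automorphism taking $\a\b$ to $\a$), yet $\rho(\a\b)$ is parabolic, so the $\rho$-orbit of the cyclic group generated by $\a\b$ is not a quasi-geodesic in $\HHH$ and Minsky's uniform quasi-geodesic condition fails for this primitive element. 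Then I would check that $\rho$ \emph{does} lie in the Bowditch set by verifying the three conditions directly: every essential two-sided simple closed curve is non-peripheral, hence sent to a hyperbolic isometry, which gives $\Tr\rho(\g)\in\R\setminus[-2,2]$, i.e.\ (BQ1); the restriction of $\rho$ to the fundamental group of either subsurface appearing in Remark~\ref{rk:sigma} is discrete, faithful and non-elementary, hence irreducible, so $\sigma_\psi(\xi)\ne0$ for every $\xi\in\Upsilon^{(2)}$, i.e.\ (BQ3); and a finite-area hyperbolic surface has only finitely many simple closed geodesics of length below any fixed bound, which forces $\Upsilon^{(2)}_\psi(2+M)$ to be finite, i.e.\ (BQ4). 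This gives $\rho\in(\mathfrak{X}_{\Uom})_Q\setminus\X_{\mathrm{ps}}$, as desired. (Equivalently, one could observe that $\rho$ lies in set $(4)$ of Proposition~\ref{pr:growth} but not in $\X_{\mathrm{ps}}$.)

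The point I expect to be the main obstacle is the verification of (BQ4) for this cusped holonomy --- equivalently, the estimate $\mathrm{L}(\rho(\g))\ge k\,\mathrm{W}(\g)$ for all two-sided simple closed curves $\g$ needed for membership in set $(4)$ of Proposition~\ref{pr:growth}. This is the Fuchsian analogue of Bowditch's original observation for the once-punctured torus \cite{bow_mar}: a simple closed geodesic of bounded hyperbolic length cannot penetrate far into a cusp region and so is confined to a fixed compact part of $N$, while simple closed geodesics lying there and crossing a fixed ideal triangulation a bounded number of times are finite in number, the word length of such a curve being comparable to its crossing number. Granting this estimate, and using Remark~\ref{rk:sigma} together with Proposition~\ref{pr:growth}, the remaining verifications are routine.
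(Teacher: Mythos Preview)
Your proposal is correct and follows essentially the same route as the paper: the inclusion is obtained from Proposition~\ref{pr:growth} by taking $\g_1=e$, $\g_2=\g$ in the quasi-geodesic condition and using that simple closed curves are primitive, and strictness is witnessed by the holonomy of a complete finite-area hyperbolic structure on $N$ with all three boundary components cusped. The paper simply asserts that this cusped holonomy lies in $(\mathfrak{X}_{\Uom})_Q$ but not in $\X_{\mathrm{ps}}$, whereas you supply the details (parabolic boundary kills primitive stability; (BQ1), (BQ3), (BQ4) hold for a Fuchsian finite-area structure), so your argument is if anything more complete than the paper's. One small quibble: your parenthetical justification ``a homeomorphism of $N$ carries it to one of the standard generators'' works for one-sided curves but not for two-sided ones, since a homeomorphism cannot exchange the two types; nevertheless two-sided simple closed curves such as $\a\b^{-1}$ are still primitive in $\Gamma$ because $\{\a\b^{-1},\b,\g\}$ is a free basis, so the conclusion stands.
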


Finally, we prove this alternative characterization of Bowditch maps in terms of the attracting subtree:

\begin{Proposition}\label{pro:finite_tree}
	Let $K\geq 2+M$. Then $\psi \in ({\bf \Psi}_{\Uom})_Q$ if and only if the tree $T_\psi (K)$ is finite.
\end{Proposition}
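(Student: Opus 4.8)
The plan is to prove the two implications separately; the substance lies in recovering the conditions (BQ1), (BQ3) and (BQ4) from the finiteness of $T_\psi(K)$.

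For the direction $\psi\in({\bf\Psi}_{\Uom})_Q\Rightarrow T_\psi(K)$ finite, I would simply rerun the argument of Lemma~\ref{lem:tree} with $K$ in place of $2+M$. By Proposition~\ref{pro:equiv_def} (equivalently, by Corollary~\ref{cor:fib_growth}) the set $\Upsilon_\psi^{(2)}(K)$ is finite. For each $\xi\in\Upsilon_\psi^{(2)}(K)$, conditions (BQ1) and (BQ3) give $\psi(\xi)\notin[-2,2]$ and $\sigma_\psi(\xi)\neq0$, so Lemma~\ref{lem:neighbors}(3) forces the neighbouring regions of $\xi$ to grow exponentially, whence $H_\psi(\xi)<\infty$. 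Moreover $\psi$ cannot vanish on a region $\a$: if it did, every face $\xi_{\a,\b}$ would satisfy $|\psi(\xi_{\a,\b})|=|\l_{\a\b}|\le M$, so the infinitely many such faces would all lie in $\Upsilon_\psi^{(2)}(2+M)$, contradicting (BQ4). Hence $H_\psi^{\ast}(\xi,K)<\infty$, so $J_\psi(\xi,K)$ is a finite arc, and $T_\psi(K)=\bigcup_{\xi\in\Upsilon_\psi^{(2)}(K)}J_\psi(\xi,K)$ is a finite union of finite arcs.

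For the converse, assume $T_\psi(K)$ is finite. I would first observe that every edge of $\Upsilon$ lies on the boundary of exactly three faces (the three faces through an edge $(\a,\b,\g)$ being $\xi_{\a,\b}$, $\xi_{\a,\g}$, $\xi_{\b,\g}$), and that each $J_\psi(\xi,K)$ is a non-empty subarc of $\partial\xi$; counting incidences then shows that $\Upsilon_\psi^{(2)}(K)$ has at most three times as many elements as $T_\psi(K)$ has edges, hence is finite, and since $\Upsilon_\psi^{(2)}(2+M)\subseteq\Upsilon_\psi^{(2)}(K)$ this yields (BQ4). Next, each $J_\psi(\xi,K)$ is now finite, which forces $H_\psi^{\ast}(\xi,K)<\infty$, hence $H_\psi(\xi)<\infty$, and by the definition of $H_\psi$ this means precisely that $\psi(\xi)\notin[-2,2]$ and $\sigma_\psi(\xi)\neq0$; so (BQ1) and (BQ3) hold on $\Upsilon_\psi^{(2)}(K)$. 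For a face $\xi=\xi_{\a,\b}\notin\Upsilon_\psi^{(2)}(2+M)$ condition (BQ1) is automatic: either $\min\{|\psi(\a)|,|\psi(\b)|\}\ge2+M$, whence $|\psi(\xi)|\ge|\psi(\a)\psi(\b)|-M\ge(2+M)^2-M>2$, or else $|\psi(\xi)|\ge(2+M)^2+M>2$; in both cases $\psi(\xi)\notin[-2,2]$. Finally, for (BQ3) on such a $\xi_0$: if $\sigma_\psi(\xi_0)=0$ then $\psi(\xi_0)\notin[-2,2]$ by the previous point, so by Lemma~\ref{lem:neighbors}(4) the two sequences of neighbouring regions of $\xi_0$ converge, and the last Lemma of Section~\ref{s:general} then yields either $\xi_0\in\Upsilon_\psi^{(2)}(K)$, contradicting the choice of $\xi_0$, or $\Upsilon_\psi^{(2)}(K)$ infinite, contradicting the finiteness established above. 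Thus (BQ1), (BQ3) and (BQ4) all hold, and $\psi\in({\bf\Psi}_{\Uom})_Q$.

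The step I expect to be the main obstacle is this last one, ruling out a face $\xi_0$ with $\sigma_\psi(\xi_0)=0$ lying outside $\Upsilon_\psi^{(2)}(K)$. The difficulty is that $\sigma_\psi(\xi_0)=0$ does not by itself produce faces of small trace near $\xi_0$; what saves the argument is that the neighbours of $\xi_0$ not merely stay bounded but actually \emph{converge} (Lemma~\ref{lem:neighbors}(4)), so that consecutive neighbour-values become arbitrarily close and a Fork-Lemma-type estimate, exactly as in the proof of the last Lemma of Section~\ref{s:general}, produces a face of trace close to $2+M$ at a vertex of $\partial\xi_0$. Carrying out this bookkeeping---and in particular treating the boundary value $K=2+M$, where the cited lemma applies only for $K>2+M$, so that one must instead work with $K'=2+M+\varepsilon$ and verify that if $\Upsilon_\psi^{(2)}(2+M+\varepsilon)$ is infinite for every small $\varepsilon$ while $\Upsilon_\psi^{(2)}(2+M)$ is finite then the limit values of the converging neighbours force infinitely many faces into $\Upsilon_\psi^{(2)}(2+M)$---is the delicate part of the proof.
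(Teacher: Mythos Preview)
Your proof is correct and uses the same ingredients as the paper's; the only difference is packaging. For the forward direction you reproduce the argument of Lemma~\ref{lem:tree} (with the extra, and justified, care that $\psi$ does not vanish on a region, a point the paper glosses over). For the converse the paper argues by contrapositive: if any one of (BQ1), (BQ3), (BQ4) fails it exhibits an infinite piece of $T_\psi(K)$ directly---either the full boundary of a face $\xi$ with $\psi(\xi)\in[-2,2]$ or $\sigma(\xi)=0$ lying in $\Upsilon_\psi^{(2)}(K)$, or infinitely many non-empty arcs $J_\psi(\xi,K)$ when $\Upsilon_\psi^{(2)}(K)$ is infinite. This three-case split avoids your distinction between faces inside and outside $\Upsilon_\psi^{(2)}(K)$ (and $\Upsilon_\psi^{(2)}(2+M)$), and in particular makes what you flag as the ``main obstacle'' a non-issue: in the paper's organisation, a face $\xi_0$ with $\sigma(\xi_0)=0$ either lies in $\Upsilon_\psi^{(2)}(K)$ (and then $J_\psi(\xi_0,K)=\partial\xi_0$ is infinite) or forces $\Upsilon_\psi^{(2)}(K)$ to be infinite, by exactly the lemma you already cite---no further bookkeeping is needed. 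Your worry about the boundary value $K=2+M$ applies verbatim to the paper's proof as well, since that lemma is stated only for $K>2+M$.
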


\begin{proof}
	Let $K \geq 2+M$. Suppose $\psi \in ({\bf \Psi}_{\Uom})_Q$, then the set $\Upsilon_\psi^{(2)} (K)$ is finite, and for each element $\xi \in \Upsilon_\psi^{(2)} (K)$, we have $\psi (\xi) \notin [-2 , 2]$ and $\sigma (\xi) \neq 0$. Hence, the function $H_{\psi} (\xi)$ is finite, which means that the subarc $J_\psi (\xi, K)$ is finite for all $K \geq 2+M$. The tree $T_\psi (K)$ is now a finite union of finite subarcs. 
	
	Now suppose $\psi \notin ({\bf \Psi}_{\Uom})_Q$, then we have three cases:
	\begin{enumerate}
		\item There exists $\xi$ such that $\psi(\xi) \in [-2 , 2 ]$. In this case $\xi \in \Upsilon_\psi^{(2)} (K)$, and the arc $J_\psi (\xi, K) = \xi$ is infinite.  So $T_\psi (K)$ is infinite. 
		\item The set $\Upsilon_\psi^{(2)} (K)$ is infinite. As for each $\xi \in \Upsilon_\psi^{(2)} (K)$, the arc $J_\psi (\xi, K)$ contains at least one edge, then the tree $T_\psi (K)$ is infinite.
		\item There exists $\xi$ such that $\sigma (\xi) = 0$. In this case, either $e \in \Upsilon_\psi^{(2)} (K)$, in which case the arc $J_\psi (\xi, K) = \xi$ is contained in $T_\psi (K)$, or $\Upsilon_\psi^{(2)} (K)$ is infinite. 
	\end{enumerate}
	In either cases, the tree $T_\psi (K)$ is infinite, which concludes the proof of the Proposition.
	\end{proof}

\subsection{Openness and proper discontinuity} 

First we prove the following lemma which will imply the openness of $({\bf \Psi}_{\Uom})_Q$.

\begin{Lemma}\label{lem:treestable}
Let $K > 2+M$. For each $\psi \in ({\bf \Psi}_{\Uom})_Q$,  if $T_\psi(K)$ is non-empty, there exists a neighborhood $U_\psi$ of $\psi \in {\bf \Psi}_{\Uom}$ such that $\forall \phi \in U_\psi$, the tree $T_\phi (K)$ is contained in $T_\psi (K)$.
\end{Lemma}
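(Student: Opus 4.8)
The plan is to exhibit an explicit neighborhood $U_\psi$ on which the attracting subtree cannot grow beyond $T_\psi(K)$. Recall from Lemma \ref{lem:edgephi} that an edge $e = \g \cap \xi$ lies in $T_\phi(K)$ if and only if $\xi = (\a,\b) \in \Upsilon^{(2)}_\phi(K)$ and $\g \in \Upsilon^{(3)}(H_\phi^\ast(\xi,K))$. So to control $T_\phi(K)$ it suffices to control, for each relevant face $\xi$, the two quantities ``is $\xi \in \Upsilon^{(2)}_\phi(K)$?'' and ``how big is $H_\phi^\ast(\xi,K)$?''. The first observation is that since $\psi \in ({\bf \Psi}_{\Uom})_Q$, by Proposition \ref{pro:finite_tree} the tree $T_\psi(K)$ is finite, so only finitely many faces $\xi$ and finitely many regions $\g$ are involved; this is what makes a uniform choice of neighborhood possible.

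First I would note that for any fixed $X \in \Upsilon^{(2)} \cup \Upsilon^{(3)}$, the evaluation map $\phi \mapsto \phi(X)$ is continuous (indeed it is a polynomial expression in the initial data), and by the Remark following Lemma \ref{lem:neighbors} the map $\phi \mapsto H_\phi(\xi)$ is continuous wherever it is finite; hence so is $\phi \mapsto H_\phi^\ast(\xi,K)$. Since $\psi \in ({\bf \Psi}_{\Uom})_Q$, for each $\xi \in \Upsilon^{(2)}_\psi(K)$ we have $\psi(\xi) \notin [-2,2]$, $\sigma_\psi(\xi) \neq 0$, and $H_\psi^\ast(\xi,K) < \infty$. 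The key step is then to build $U_\psi$ so that three things hold simultaneously for every $\phi \in U_\psi$: (a) for each of the finitely many faces $\xi \in \Upsilon^{(2)}_\psi(K)$, we still have $\phi(\xi) \notin [-2,2]$ and $\sigma_\phi(\xi) \neq 0$, and moreover $H_\phi^\ast(\xi,K) \le H_\psi^\ast(\xi,K)$ — or more precisely, strictly less than the next integer $n$ at which $|c_n|$ or $|d_n|$ for $\psi$ exceeds $H_\psi^\ast(\xi,K)$, using the strict monotonicity in Lemma \ref{lem:neighbors} and continuity of the relevant sequences; (b) no face $\xi \notin \Upsilon^{(2)}_\psi(K)$ enters $\Upsilon^{(2)}_\phi(K)$ — this follows because $\Upsilon^{(2)}_\psi(2+M)$ is finite hence $\Upsilon^{(2)}_\psi(K)$ is finite, so there is a definite gap between $K$ (and $K^2+M$) and the $\psi$-values of the finitely many ``boundary'' faces adjacent to $T_\psi(K)$, and this gap persists under a small perturbation; (c) for the finitely many regions $\g$ lying in $T_\psi(K)$ we keep the comparison, while regions just outside stay outside. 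Intersecting these finitely many open conditions gives the desired neighborhood $U_\psi$.

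Concretely, for each of the finitely many edges $e = \g \cap \xi$ with $e \notin T_\psi(K)$ but sharing a vertex with $T_\psi(K)$ (the ``frontier'' edges), I would record the strict inequality witnessing $e \notin T_\psi(K)$ — either $|\psi(\xi)| \ge K^2 + M$ and $\xi$ has no endpoint region of $\psi$-value $< K$, or $|\psi(\g)| > H_\psi^\ast(\xi,K)$ — and take a neighborhood small enough that this strict inequality is preserved. Because $T_\phi(K)$ is connected (the Proposition in Section \ref{sec:att_tree}) and contains the three regions around the base edge $e_0$ just as $T_\psi(K)$ does, a connected attracting subtree that cannot cross any frontier edge of $T_\psi(K)$ must be contained in $T_\psi(K)$; this is the final assembly step. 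The main obstacle, I expect, is handling condition (a) cleanly: one must argue that the \emph{integer} cutoffs $n_1, n_2$ defining the attracting arc $J_\phi(\xi,K)$ do not jump outward under perturbation. This is where the strict monotonicity of $(|c_n|,|d_n|)$ outside $[n_1,n_2]$ from Lemma \ref{lem:neighbors}(3) is essential: since at $n_1 - 1$ and $n_2 + 1$ the $\psi$-values strictly exceed $H_\psi^\ast(\xi,K)$, and these are finitely many continuous functions of $\phi$, a small enough perturbation keeps them above $H_\phi^\ast(\xi,K)$, so $J_\phi(\xi,K) \subseteq J_\psi(\xi,K)$ for each of the finitely many $\xi$ — and taking the (finite) intersection of all these neighborhoods finishes the proof.
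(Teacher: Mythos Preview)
Your proposal is correct and follows essentially the same route as the paper: both arguments exploit the finiteness of $T_\psi(K)$, the edge characterization of Lemma~\ref{lem:edgephi}, continuity of the data (including $H_\phi^\ast$), and the connectedness of $T_\phi(K)$ to conclude that blocking the finitely many ``frontier'' edges suffices. The paper's version is simply more streamlined---it skips your conditions (a)--(c) and jumps directly to the set $E$ of edges meeting $T_\psi(K)$ in a single point, showing each stays outside $T_\phi(K)$ by preserving one strict inequality; your more careful discussion of the integer cutoffs $n_1,n_2$ is a detail the paper leaves implicit.
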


\begin{proof}
	Let $\psi \in ({\bf \Psi}_{\Uom})_Q$, and $K> 2+M$ such that $T_\psi (K)$ is non-empty. First, it is easy to see that for $\phi$ close enough to $\psi$, the trees $T_\psi (K)$ and $T_\phi (K)$ have non-empty intersection.
	
	 Now, consider the set $E$ of edges which meet $T_\psi (K)$ in a single point. Let $e \in E$. We can show that, if $\phi \in {\bf \Psi}_{\Uom}$ is close enough to $\psi$, then $e \notin T_\phi (K)$. Indeed, from Lemma \ref{lem:edgephi}, we have that:
	 $$e \notin T_\phi (K) \, \Leftrightarrow \forall (\a , \b , \g ) = e ,  \left\{ \begin{array}{rl}
	 			 		& |\phi(\a)|> K \mbox{ and } |\phi(\b)| > K \\
				\mbox{  or  } &	|\phi (\xi_{\a , \b}) | > K^2 + M \\
				\mbox{  or  } & |\phi (\g) | > H_{\psi}^{\ast} (\xi_{\a , \b} , K) \end{array} \right. $$
On the other hand, since the edge $e$ is not in $T_\psi (K)$, we know that one of those strict inequalities holds for $\psi$. So if we choose $\phi$ close enough, the corresponding inequality in $\phi$ will also hold and hence $e \notin T_\phi (K)$.
	 
Since the tree $T_\psi (K)$ is finite, there is only a finite number of edges in $E$. So again, we can choose $\phi$ close enough so that any edge $e \in E$ is not in $T_\phi (K)$.
	
	Now the tree $T_\phi (K)$ is connected and $T_\psi(K) \cap T_\phi (K)$ is non empty, hence $T_\phi (K)$ is entirely contained in $T_\phi (K)$.
\end{proof}

\begin{Theorem}
	The set $({\bf \Psi}_{\Uom})_Q$ is open in ${\bf \Psi}_{\Uom}$ and the action of $\Gamma$ is properly discontinuous.
\end{Theorem}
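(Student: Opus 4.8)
The plan is to deduce openness from Lemma \ref{lem:treestable} and Proposition \ref{pro:finite_tree}, and then to deduce proper discontinuity from the finiteness of the attracting subtree together with the fact that the group $\Gamma$ (here the finite-index subgroup $F$ of $\MCG(N)$ generated by the involutions $\theta_a,\dots,\theta_d$) acts on $\Upsilon$ by simplicial automorphisms which permute the attracting subtrees equivariantly.

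First I would establish openness. Fix $K > 2+M$ and take $\psi \in ({\bf \Psi}_{\Uom})_Q$. By Proposition \ref{pro:finite_tree} the tree $T_\psi(K)$ is finite. If $T_\psi(K)$ is non-empty, Lemma \ref{lem:treestable} produces a neighborhood $U_\psi$ of $\psi$ such that $T_\phi(K) \subseteq T_\psi(K)$ for every $\phi \in U_\psi$; in particular $T_\phi(K)$ is finite, so $\phi \in ({\bf \Psi}_{\Uom})_Q$ again by Proposition \ref{pro:finite_tree}. If $T_\psi(K)$ is empty, then $\Upsilon^{(2)}_\psi(K)$ is empty, which is an open condition (by continuity of $\psi \mapsto \psi(\xi)$ for each of the finitely many faces $\xi$ that could possibly enter $\Upsilon^{(2)}(K)$ near $\psi$ — here one uses the upper Fibonacci bound of Lemma \ref{lem:upper_bound} to control which faces can have small value), so a whole neighborhood of $\psi$ lies in $({\bf \Psi}_{\Uom})_Q$. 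Hence $({\bf \Psi}_{\Uom})_Q$ is open.

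Next, proper discontinuity. Since $F$ has finite index in $\MCG(N)$, it suffices to prove proper discontinuity for the $F$-action. The group $F$ acts on $\Upsilon$ by simplicial automorphisms, and by naturality of all the constructions (vertex, edge and face equations are preserved, and the orientation on $\Upsilon^{(1)}$ is defined purely in terms of $\psi$), we have $T_{g\cdot\psi}(K) = g \cdot T_\psi(K)$ for every $g \in F$ and $\psi \in ({\bf \Psi}_{\Uom})_Q$. Now fix a compact set $C \subset ({\bf \Psi}_{\Uom})_Q$. Using the openness argument above together with compactness, I would cover $C$ by finitely many neighborhoods $U_{\psi_1},\dots,U_{\psi_m}$ as in Lemma \ref{lem:treestable}, so that there is a single finite subtree $T^{0} := T_{\psi_1}(K) \cup \cdots \cup T_{\psi_m}(K)$ of $\Upsilon$ with $T_\phi(K) \subseteq T^{0}$ for all $\phi \in C$. (One must first note $T_\phi(K)$ is non-empty for $\phi$ in a compact subset of the Bowditch set; if it were empty the representation would be discrete faithful convex-cocompact, and one treats that case by shrinking to the genuinely interesting locus, or simply observes that $T^0$ can be enlarged to contain at least one edge.) If $g \in F$ satisfies $g \cdot C \cap C \neq \emptyset$, pick $\phi \in C$ with $g\cdot \phi \in C$; then $g \cdot T_\phi(K) = T_{g\cdot\phi}(K) \subseteq T^{0}$ and also $T_\phi(K) \subseteq T^{0}$, so $g$ maps an edge of $T^0$ into $T^0$. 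Since $T^0$ is a finite subtree of the locally finite tree $\Upsilon$ and $F$ acts on $\Upsilon$ with finite (in fact trivial, once the coloring is fixed) vertex stabilizers and cofinitely, only finitely many $g \in F$ can carry a given edge of $T^0$ into $T^0$; hence $\{g \in F \mid g\cdot C \cap C \neq \emptyset\}$ is finite. This is exactly proper discontinuity.

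The main obstacle I anticipate is the passage from a pointwise statement (Lemma \ref{lem:treestable}, which bounds $T_\phi(K)$ inside $T_\psi(K)$ only for $\phi$ near a single $\psi$) to the uniform statement over a compact set, namely the existence of one finite subtree $T^0$ absorbing all $T_\phi(K)$ for $\phi \in C$; this is a routine compactness argument but requires care that the neighborhoods $U_{\psi_i}$ can indeed be chosen from the lemma and that the empty-tree case is handled. The second delicate point is the combinatorial finiteness input: that an automorphism of $\Upsilon$ coming from $F$ which sends some edge of the finite tree $T^0$ into $T^0$ lies in a finite set — this follows from the rigidity of the colored tree $\Upsilon$ (an automorphism preserving the coloring is determined by the image of one vertex, and there are only finitely many vertices in $T^0$), using that the $F$-action preserves the coloring up to the fixed global permutation, so this step is short once set up correctly.
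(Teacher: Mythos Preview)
Your proposal is correct and follows essentially the same route as the paper: openness via Lemma \ref{lem:treestable} and Proposition \ref{pro:finite_tree}, and proper discontinuity by covering a compact $C$ with finitely many neighborhoods $U_{\psi_i}$, forming the finite union $\mathcal{T}=\bigcup_i T_{\psi_i}(K)$, and using the equivariance $T_{g\phi}(K)=g\,T_\phi(K)$ to reduce to the finiteness of $\{g\mid g\mathcal{T}\cap\mathcal{T}\neq\emptyset\}$. Your extra justification of this last finiteness (via finite vertex stabilizers of the colored tree) is a detail the paper simply asserts.

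One small point to clean up: your separate treatment of the case $T_\psi(K)=\emptyset$ does not go through as written. Invoking the upper Fibonacci bound of Lemma \ref{lem:upper_bound} only bounds $|\psi|$ from \emph{above}, so it does not control which faces might acquire a small value for a nearby $\phi$; the condition ``$\Upsilon^{(2)}_\phi(K)=\emptyset$'' involves infinitely many inequalities and is not obviously open. The paper avoids this entirely by simply choosing $K>2+M$ large enough that $T_\psi(K)\neq\emptyset$ (always possible for a single $\psi$, since $|\psi(\alpha)|$ is finite on any fixed region; and uniformly over the compact $C$ by continuity). With that adjustment your argument coincides with the paper's, and your parenthetical about convex-cocompactness becomes unnecessary.
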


\begin{proof}
	The previous lemma directly implies openness. Indeed, let $\psi \in ({\bf \Psi}_{\Uom})_Q$ and $K > 2+M$ such that $T_\psi (K)$ is non empty. Then for each $\phi$ in the open neighborhood $U_\psi$ constructed in previous lemma, the tree $T_\phi (K)$ is contained in $T_\psi (K)$ and hence is finite. Which proves that $U_\psi \subset  ({\bf \Psi}_{\Uom})_Q$.
	
	To prove that the action is properly discontinuous we take $C$ be a compact subset in $({\bf \Psi}_{\Uom})_Q$. We want to prove that the set:
	$$\Gamma_0 = \{ g \in \Gamma \, \mid \, gC \cap C \neq \emptyset \}$$
	is finite. 
	
	Let $K>2+M$ such that any for any $\psi \in C$, the tree $T_\psi (K)$ is non-empty. Now around each element of $C$, there exists a neighborhood $U_\psi$ given by the Lemma \ref{lem:treestable}. So the set $(U_\psi)_{\psi \in C}$ is a open cover of $C$. We take a finite subcover $(U_{\psi_i})_{i \in I}$ where $I$ is a finite set. 
	
	Now for each element $\psi_i$ we take the tree $T_{\psi_i} (K)$, and consider the union
	$$\mathcal{T} = \bigcup_{i \in I} T_{\psi_i} (K).$$
	
	By construction, for each element $\phi \in C$, the tree $T_\phi (K)$ is contained in $\mathcal{T} $. The tree $\mathcal{T} $ is a finite union of finite trees and hence is itself finite. It follows that the set:
	$$\Gamma_1 = \{ g \in \Gamma \, \mid \, g \mathcal{T} \cap \mathcal{T} \neq \emptyset \} \mbox{    is finite .}$$
	As $T_{g \phi} (K) = g T_{\phi} (K)$, then $\Gamma_0 \subset \Gamma_1$, and hence $\Gamma_0$ is finite. This proves that the action is properly discontinuous.
\end{proof}

\section{Concluding Remarks} 

\subsection{Generalization to other surfaces}

In this paper we are discussing the existence of a domain of discontinuity for the action of the mapping class group $\MCG(N_{1,3})$ of the three holed projective plane on $\X$. A natural idea is to generalize this theory to the case of a general orientable surface $\Sigma_{g,b}$ or non-orientable ones $N_{g,b}$. It turns out that the different characterizations of the Bowditch set $\X_Q$ given in Section \ref{s:characterization} give us a way to define  the Bowditch set $\X_Q(\Sigma_{g,b})$ as follows:
$$\X_Q(\Sigma_{g,b}) = \{\rho \in \X(F_{2g+b-1}) \mid \exists k = k(\rho), \; |\mathrm{L}(\rho(\g))| \geq k \mathrm{W}(\g)\;\;\forall \g \in \Sc(\Sigma_{g,b})\},$$
where $\Sc(\Sigma_{g,b})$ is the set of (free homotopy classes of) non-trivial, non-peripheral simple closed curves in $\Sigma_{g,b}$. This definition could also help in understanding the relationship with the set of primitive-stable elements, as we discussed in Proposition \ref{pr:primitivestable}, and likewise, this set can be defined in different ways, see Section \ref{s:characterization}. 

The problem that arises for this generalization is that the proof of the proper discontinuous action of the mapping class group in the simple cases comes from the simple combinatorial description of the complex of curves in the cases that are considered. Hence it will be interesting to understand the right combinatorial object (replacing the graph $\Upsilon$, dual to the complex of curves of $N$), because that will allow a generalization of most of the results included here. Note that in the cases analyzed, the surfaces had all small complexity, and so (2-sided) simple closed curves are in correspondence with pants decompositions, and a vertex of $\Upsilon$ (or $\Sigma$ in \cite{bow_mar}, \cite{tan_gen} and \cite{mal_ont}) corresponds to a triangulation of the surface. In particular, simple closed curves are useful in trace relations, while triangulations are useful to define `flips', which are related to our edge relations, so we need a combinatorial view point which keep both these two points of view.

\subsection{Real characters}

It is interesting to focus on real characters, that is, representations of $\pi_1 (S)$ into one of the two real forms of $\SLtwoC$, namely $\SLtwoR$ and $\mathrm{SU}(2)$. Previous work of Goldman \cite{gol_erg} and the second author \cite{pal_erg} prove that the mapping class group action is ergodic on the relative $\mathrm{SU}(2)$ character varieties for all compact hyperbolic surfaces, orientable or not, with the exception of $N_{3,0}$, $N_{1,2}$ and $N_{2,1}$. The $\SLtwoR$ case is much more complex, because one can expect to have domains of discontinuity as well as domains where the action is ergodic. In fact, a complete description of the dynamical decomposition of the action is still unknown in general.

The case of the free group $F_2$ of rank two has been studied by Goldman \cite{gol_the} and Goldman, McShane, Tan and Stantchev \cite{gol_dyn}, who gave a complete description of the dynamics of the action of $\Out( F_2)$ on the real characters. In \cite{mal_ont}, we proved few results about the real case $\X^{\R}(\Sigma_{0,4})$, but it would be interesting to give a complete dynamical decomposition for the mapping class group action on $\X^{\R}(F_3)$, like in \cite{gol_the}. In addition, it would be also interesting to generalize the work of \cite{gol_dyn} by considering all five surfaces which have $F_3 $ as their fundamental group. (Note that there are two orientable surfaces, and three non-orientable surfaces with fundamental group $F_3$.)

In the case of the free group of rank two, the results of \cite{gol_the} prove that the real representations in the domain of discontinuity for the action of $\Out (F_2)$ all come from geometric structures on a surface with fundamental group $F_2$, including hyperbolic structures with conical points. So a very interesting question would be to understand what happens for $F_3$.   

\subsection{Torelli group action}
Another follow-up project that we are planning to study is the action of the Torelli group $\Tc_n$ on $\X$. As explained in the Introduction, and in Remark \ref{Torelli}, the Torelli group $\Tc_3$ is generated by seven involutions whose actions on character is easily described. Hence, the two actions studied here and in \cite{mal_ont} can be combined to study the action of $\Tc_n$ on $\X$. A natural idea is to consider the intersection of both Bowditch sets, which are domains of discontinuity for the $\MCG(N_{1,3})$-action and the $\MCG(S_{0,4})$-action on $\X (F_3)$.

\subsection{McShane identities}
Finally, as pointed out in \cite{mal_ont}, it should be possible to describe some new McShane's identities for the four-holed sphere and for the three-holed projective plane. In \cite{hua_sim}, Huang and Norbury described an interesting identity for the case of three punctured projective plane. However, the punctured case is easier since the associated equation is very symmetric. So it would be interesting to start looking at surfaces where the boundary components have all the same trace.

%%%%%%%%%%%%%%%%%%%%%%%%%%%%%%%%%%%%%%%%%%%%%%%%%%%%

\appendix
%\addcontentsline{toc}{section}{Appendices}

% \setcounter{Lemma}{0}
% \renewcommand{\theLemma}{\Alph{section}\arabic{Lemma}}

\renewcommand{\thesection}{\Alph{section}}

\section{Explicit expression of the $H$ function}\label{app:functionH}

In this section we give an explicit expression for the function $H$ defined in section \ref{sec:att_tree}. We state it in a more general context, where it could be applied to the functions $H$ defined in Tan-Wong-Zhang \cite{tan_gen} and Maloni-Palesi-Tan \cite{mal_ont}. Note that all functions are defined on the set of faces and take values in $\R$. 

Recall that the setting is the one that is found in Section \ref{s:face}, where we consider the two bi-infinite sequence of neighboring regions around a given face $\xi$. The edge relations on consecutive edges gives a recurrence relation for these bi-infinite sequence. As this setting is similar to the situation for the one-holed torus and the four-holed sphere, we state the expression of the $H$ function for sequences satisfying a particular recurrence relation. 

Let $(Q, R, S, X) \in \C^4$ be parameters, and consider the sequences $(y_n)$ and $(z_n)$ defined by the recurrence relation:
$$\begin{pmatrix} y_{n+1} \\ z_{n+1} \end{pmatrix} =\begin{pmatrix} -1 & -X \\ X &  X^2 - 1  \end{pmatrix}   \begin{pmatrix} y_{n} \\ z_{n} \end{pmatrix} + \begin{pmatrix} Q \\ R-XQ \end{pmatrix},$$
with $y_0$ and $z_0$ satisfying the equation:
$$y_0^2+z_0^2+Xy_0z_0-Qy_0-Rz_0 = S.$$

The Lemma \ref{lem:neighbors} implies that there exists a constant $H =  H (Q,R,S, X) \in \R_{>0}\cup \{ \infty \}$, depending only on the values $(Q,R,S,X)$, such that there are integers $n_1 \leq n_2$ satisfying $|y_n| \leq H$ if and only if $n_1 \leq n \leq n_2$ and $|y_n|$ is monotonically increasing for $n \geq n_2$ and monotonically decreasing for $n \leq n_1$.

This constant is finite for $X \notin [ -2 , 2 ]$, and $ Q^2+R^2-XRQ+S(X^2-4) \neq 0$.

\begin{Lemma} $H$ can be chosen as:
$$H(Q,R,S, X) = \sqrt{|T|} |\lambda| (W +1) + |\eta|,$$
where $\lambda$ satisfies $\lambda + \lambda^{-1}= X^2 - 2$, and the other terms are defined by:
\begin{align*}
	T & = \dfrac{Q^2+R^2-XRQ+S(X^2-4)}{(X^2-4)^2}\\
	\eta & = \dfrac{2Q - XR}{X^2-4}\\
	W & =  \dfrac{1}{\sqrt{|T|} |\lambda| (|\lambda| - 1) } \left(  |\eta| + \sqrt{|\eta|^2 - |\lambda|(|\lambda|^2 - 1 )} \right). 
\end{align*}
\end{Lemma}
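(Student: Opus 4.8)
The plan is to solve the affine recurrence explicitly, exactly as was done for the neighbouring regions in Section~\ref{s:face}, and then extract a valid $H$ from an elementary analysis of a single geometric sequence. First I would diagonalise $\begin{pmatrix} -1 & -X \\ X & X^2-1\end{pmatrix}$; it has determinant $1$ and trace $X^2-2$, so its eigenvalues are $\lambda,\lambda^{-1}$ with $\lambda+\lambda^{-1}=X^2-2$, as in the statement. Writing $\Lambda$ for a square root of $\lambda$, the general solution of the recurrence (valid since $X^2\neq 4$) has the form
$$y_n = A\Lambda^{2n}+B\Lambda^{-2n}+\eta,\qquad z_n = -\bigl(A\Lambda^{2n+1}+B\Lambda^{-2n-1}\bigr)+\zeta,$$
where $(\eta,\zeta)$ is the unique fixed point of the affine map; solving the resulting $2\times 2$ linear system gives $\eta=(2Q-XR)/(X^2-4)$, the direct analogue of the function $\eta(t)$ of Section~\ref{s:face}. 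Plugging this solution into the conic $y_0^2+z_0^2+Xy_0z_0-Qy_0-Rz_0=S$, which is an invariant of the recurrence just as the vertex equation is preserved by the edge relations, the cross terms collapse and one obtains a closed expression for the product $AB$ in terms of $X$ and $T$ --- the exact analogue of the identity $AB=\sigma(\xi)/(4-\psi(\xi)^2)$ of Section~\ref{s:face}. In particular $A,B$ are both nonzero precisely when $T\neq 0$, while $|\lambda|\neq 1$ precisely when $X\notin[-2,2]$, which are the two hypotheses under which $H$ is finite.

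I would then reduce to the scalar sequence $s_n:=|A\Lambda^{2n}|$. After replacing $n$ by $-n$ and exchanging $A$ with $B$ if necessary we may assume $|\lambda|>1$, so that $s_n$ is strictly increasing and $|B\Lambda^{-2n}|=|AB|/s_n$. The triangle inequality then sandwiches
$$\bigl|\,s_n-|AB|/s_n\,\bigr|-|\eta|\;\le\;|y_n|\;\le\;s_n+|AB|/s_n+|\eta|,$$
which already exhibits the ``U--shaped'' behaviour of $n\mapsto|y_n|$ (decreasing, then a bounded stretch around $s_n\approx\sqrt{|AB|}$, then increasing). To locate where monotonicity sets in I would compare a lower bound for $|y_{n+1}|$, namely $|\lambda|s_n-|AB|/(|\lambda|s_n)-|\eta|$, with the upper bound for $|y_n|$ above: the inequality $|y_{n+1}|\ge|y_n|$ is then implied by a quadratic inequality $(|\lambda|-1)s_n^2-2|\eta|s_n-C\ge 0$ in $s_n$ with an explicit positive constant $C$ built from $|AB|$ and $|\lambda|$, and its positive root, after dividing by the normalising factor $\sqrt{|T|}\,|\lambda|$, is the quantity $W$ in the statement; by the symmetry $A\leftrightarrow B$ the symmetric threshold governs monotone decrease. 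With $H:=\sqrt{|T|}\,|\lambda|(W+1)+|\eta|$ one checks: (i) if $|y_n|>H$ then $s_n$ falls outside the ``balanced window'' about $\sqrt{|AB|}$, so $n$ lies in one of the two monotone tails; and (ii) inside the right tail $|y_{n+1}|>|y_n|$ and $s_{n+1}>s_n$, so the tail propagates, and symmetrically on the left. A short induction then gives that $\{\,n:|y_n|\le H\,\}$ is an interval $[n_1,n_2]$ and that $|y_n|$ is monotone increasing for $n\ge n_2$ and decreasing for $n\le n_1$, which is exactly the claim; the borderline cases $AB=0$ (when $(y_n)$ converges, by Lemma~\ref{lem:neighbors}(4)) and $X\in\{-2,2\}$ (at most quadratic growth) are handled separately and correspond to $H=\infty$.

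The main obstacle will be the constant--chasing in the final step: one must keep the triangle--inequality estimates sharp enough that the threshold root is precisely $W$, and verify that the single extra factor of $|\lambda|$ (the ``$+1$'') together with the additive term $|\eta|$ supplies exactly the slack needed both for the interval property and for the propagation of monotonicity. Everything else --- the diagonalisation, the identification of $\eta$ and of $AB$, and the ``U--shape'' --- is routine linear algebra and single--variable calculus, carried out in substance already in Section~\ref{s:face} and Lemma~\ref{lem:neighbors}.
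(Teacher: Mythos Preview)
Your approach is essentially the paper's: write $y_n = A\lambda^n + B\lambda^{-n} + \eta$ with $AB=T$, sandwich $|y_n|$ via the triangle inequality, and extract a quadratic threshold whose positive root is $W$, so that $|y_n|>H$ forces the monotone tail. The only cosmetic difference is that the paper first reparametrises the index so that $\sqrt{|T|}\,|\lambda|^{-1}\le |A|,|B|\le \sqrt{|T|}\,|\lambda|$, giving the clean upper bound $|y_n|\le \sqrt{|AB|}\,|\lambda|\,(|\lambda|^n+1)+|\eta|$ from which $|y_n|>H\Rightarrow |\lambda|^n>W$ is immediate, whereas you track $s_n=|A\lambda^n|$ directly; the quadratic inequality and the resulting constants are the same.
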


\begin{proof}
From the recurrence relation satisfied by $y_n$ and $z_n$ we can deduce that:
$$y_n = A \l^n + B \l^{-n} + \eta,$$
with $AB = T$.

We can assume, without loss of generality, that 
$$\sqrt{|T|} | \l | ^{-1} \leq |A| , |B| \leq \sqrt{|T|} | \l |,$$
(up to reparametrization in $n$). 

Suppose that  $|\l |^n > W$. Then we have that 
$$|\l |^{2n} - \frac{2\eta}{\sqrt{T} |\l| ( |\l | - 1 )} |\l |^n - |\l | \frac{|\l|+1}{|\l |-1} \geq 0.$$
We deduce that 
$$|\l |^{2n+1} \geq \frac{|B| |\l|+1}{|A| |\l |-1} + \frac{2\eta}{|A| ( |\l | - 1 )} |\l |^{n+1}.$$
This can be interpreted as 
$$|y_{n+1}| \geq |A| |\l |^{n+1} - |B| |\l |^{-n-1} - |\eta | \geq |A| |\l |^{n} + |B| |\l |^{n} + |\eta | \geq  |y_n|.$$

On the other hand we have:
\begin{align*}
	|y_n| & \leq |A \l^n + B \l^{-n} | + |\eta| \\
		& \leq \sqrt{|AB|} |\l | ( |\l |^n + |\l |^{-n} ) + |\eta |\\
		& \leq \sqrt{|AB|} |\l | ( |\l |^n + 1 ) + |\eta |.
\end{align*}
In conclusion,  if $| y_n | \geq \sqrt{|AB|} |\l | ( W + 1 ) + |\eta | = H$, then we have $|\l |^n \geq W$, and we can apply the previous discussion to show that $|y_{n+1}| \geq |y_n|$. 

In a similar way, we can prove that, if $|y_{-n}| \geq W$, then $|y_{-n-1} | \geq  | y_{-n} |$.
\end{proof}

\begin{Remark} We can also define a function $H'$ for the sequence $z_n$ by permuting the constant $Q$ and $R$. In other words $H' (Q,R,S,X) = H(R,Q,S,X)$.
\end{Remark}

Now we can apply this formula in several cases. We will follow the notation used in the papers where the respective case is discussed.  %Need to say a little bit more.

\begin{enumerate}
	\item	\textit{The one-holed torus $S_{1,1}$} (Bowditch \cite{bow_mar}, Tan-Wong-Zhang \cite{tan_gen}):\\
	We consider a one-holed torus $S_{1,1}$, with boundary trace equal to $\mu$, that is, we consider the relative character variety $\X_{(a,b,c,d)}(S_{0,4})$. Let's fix a face $X$ such that $\phi(X) = x$. We look at the neighbors of $X$. In this case, we can set: 
	\begin{align*}
		Q &= R = 0\\\
		X &=x\\
		S &= \mu-x^2, 
	\end{align*}
and the formula becomes	
$$H(0, 0, \mu - x^2 , x) = \sqrt{\dfrac{| x^2 - \mu |}{|x^2 - 4|} } \dfrac{2 |\l|^2}{|\l |-1} $$
This corresponds exactly to the function used by Tan-Wong-Zhang (\cite{tan_gen}, Lemma 3.20). 
%Now this can used to proved that if $x \notin [-2,2]$ and $x \neq \mu$, then the neighbors of $X$ have exponential growth.

	\item \textit{The four-holed sphere $S_{0,4}$} (Maloni-Palesi-Tan \cite{mal_ont}):\\
	We consider a four-holed sphere $S_{0,4}$ with boundary traces $(a,b,c,d)$, that is, we consider the relative characyter variety $\X_{(a,b,c,d)}(S_{0,4})$.  Instead of $\Upsilon$, the graph is called $\Sigma$, and is a trivalent simplicial tree in $\mathbb{H}^2$, and the set of faces (or of connected components of $\mathbb{H}^2\setminus\Sigma$) is denoted $\Omega(\Sigma)$. Let's fix a face $X \in \Omega_1(\Sigma)$ (of colour $1$) such that $\phi(X) = x$. We look at the neighbors of $X$. The expression of $(Q,R,S, X)$ for regions in $\Omega_2(\Sigma)$ or $\Omega_3(\Sigma)$ can be found with a similar method.
	In this case, we can use:
	\begin{align*}
		Q &= bc+ad\\
		R &= ac+bd\\
		X &= x\\
		S &= 4-a^2-b^2-c^2-d^2-abcd-(ab+cd)x-x^2. 
	\end{align*}

	\item \textit{The three-holed projective plane $N_{1,3}$} (Maloni--Palesi):\\
We consider a three-holed projective plane $N_{1,3}$ with boundary traces $(x,y,z)$, that is, we consider the relative character variety $\X_{(x,y,z)}(N_{1,3})$. Let's fix a face $\xi = \xi_{(\a,\b)} \in \Upsilon^{(2)}$ such that $\xi = \a \cap \b$ with $\a\in\Upsilon^{(3)}_1$ and $\b\in\Upsilon^{(3)}_2$ such that $\psi(\a) = a$ and $\psi(\b) = b$. We look at the neighbors of $\xi$. The expression of $(Q,R,S, X)$ for regions in $\Omega_2(\Sigma)$ or $\Omega_3(\Sigma)$ can be found with a similar method.
	In this case, we can use:
	\begin{align*}
		Q &= yb+az\\
		R &= ya+zb\\
		X &= ab-x\\
		S &= 4-a^2-b^2-x^2-y^2-z^2-xyz-xab. 
	\end{align*}
\end{enumerate}

The formula gives an explicit method to determine if an edge of the 1-skeleton is in the attracting subtree $T_\phi (2+M)$. As the tree is connected, one can produce an explicit algorithm to determine if a Markoff map is in the Bowditch set. This is one of the necessary tool if one wants to generate computer pictures of the Bowditch set.

%%%%%%%%%%%%%%%%%%%%% %%%%%%%%%%%%%%%%%%%%%%%

\section{Relation with the Torelli subgroup}\label{app:Torelli}

\subsection{Representatives of the involutions}

%Throughout the appendix, we denote an element $\phi$ of $\Out (F_3)$ by 
%$$[ \widetilde{\phi} (A) , \widetilde{\phi} (B) , \widetilde{\phi} (C) ]$$
%where $\{ A,B,C \}$  is a generating set of $F_3$, and $\widetilde (\phi)$ is a representative of $\phi $ in $\mathrm{Aut} (F_3)$.

In the Introduction, we defined seven involutions on the character variety of the free group $F_3$ of rank three, which are given by seven involutions in $\Out (F_3)$. We can  give explicit representatives of the involutions in $\mathrm{Aut} (F_3)$. 

%\begin{align*}
%	 &\tau_a = [C B^{-1} A^{-1} C^{-1} B \, , \,  B^{-1}  \, , \, C^{-1}] \\
%	 &\tau_b = [ A^{-1} \, , \,  A C^{-1} B^{-1} A^{-1} C  \, , \, C^{-1} ] \\
%	 &\tau_c = [ A^{-1} \, , \,  B^{-1}  \, , \, B A^{-1} C^{-1} B^{-1} A ] \\
%	 &\tau_x = [ A^{-1} \, , \, C^{-1} B^{-1} C \, , \, C^{-1} ] \\
%	 &\tau_y = [A^{-1} \, , \,  B^{-1}  \, , \, A^{-1} C^{-1} A ] \\
%	 &\tau_z = [ B^{-1} A^{-1} B \, , \, B^{-1}  \, , \,  C^{-1} ] \\
%	 &\tau_d = [A^{-1} \, , \, B^{-1}  \, , \, C^{-1} ] 
%\end{align*}

\begin{align*}
	& \widetilde{\tau}_a : \begin{pmatrix} A \\ B \\ C \end{pmatrix} \mapsto \begin{pmatrix} C B^{-1} A^{-1} C^{-1} B \\ B^{-1}  \\ C^{-1} \end{pmatrix}, & & 
		 \widetilde{\tau}_x : \begin{pmatrix} A \\ B \\ C \end{pmatrix} \mapsto \begin{pmatrix} A^{-1} \\ C^{-1} B^{-1} C \\ C^{-1} \end{pmatrix}, & & \\
	& \widetilde{\tau}_b : \begin{pmatrix} A \\ B \\ C \end{pmatrix} \mapsto \begin{pmatrix} A^{-1} \\ A C^{-1} B^{-1} A^{-1} C  \\ C^{-1} \end{pmatrix}, & &
		 \widetilde{\tau}_y : \begin{pmatrix} A \\ B \\ C \end{pmatrix} \mapsto \begin{pmatrix} A^{-1} \\  B^{-1}  \\ A^{-1} C^{-1} A  \end{pmatrix},& &
		 \widetilde{\tau}_d : \begin{pmatrix} A \\ B \\ C \end{pmatrix} \mapsto \begin{pmatrix} A^{-1} \\ B^{-1}  \\ C^{-1} \end{pmatrix}, \\
	& \widetilde{\tau}_c : \begin{pmatrix} A \\ B \\ C \end{pmatrix} \mapsto \begin{pmatrix} A^{-1} \\ B^{-1}  \\ B A^{-1} C^{-1} B^{-1} A \end{pmatrix}, & &
		\widetilde{\tau}_z : \begin{pmatrix} A \\ B \\ C \end{pmatrix} \mapsto \begin{pmatrix} B^{-1} A^{-1} B \\  B^{-1}  \\ C^{-1} \end{pmatrix}. & & \\
\end{align*}

These are representatives of maps $\tau_a, \dots , \tau_z$ in $\Out (F_3)$. The action of $\tau_a, \dots , \tau_z$ on the character variety $X (F_3)$ is exactly given by the seven involutions $\theta_a , \dots , \theta_z$ defined in the introduction.

\subsection{Torelli subgroup}

There is a natural surjective homomorphism from $\Out (F_n)$ to $\mathrm{GL}(n, \Z)$, given by abelianizing the free group $F_n$. Its kernel is known as the {\em Torelli subgroup} of $\Out (F_n)$, and we denote it by $\Tc_n$:
$$1 \rightarrow \Tc_n \rightarrow \Out(F_n) \rightarrow \mathrm{GL}(n, \Z) \rightarrow 1 $$
We can also define the subgroup $\Tc_n'$ as the inverse image of the center $\{ \pm I_n \}$ by the abelianization map: 
$$1 \rightarrow \Tc_n' \rightarrow \Out(F_n) \rightarrow \mathrm{PGL}(n, \Z) \rightarrow 1 $$

It is clear that $\Tc_n$ is an index two subgroup of $\Tc_n'$. This group is generated by $\Tc_n$ together with the involution $(A_1 , \dots , A_n) \mapsto (A_1^{-1} , \dots, A_n^{-1} )$.

\begin{Proposition}
The group $\Gamma$ generated by the seven involutions $\tau_a, \dots, \tau_z$ is equal to $\Tc_3'$
\end{Proposition}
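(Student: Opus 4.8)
The plan is to prove the two inclusions $\Gamma \subseteq \Tc_3'$ and $\Tc_3' \subseteq \Gamma$ separately. For the inclusion $\Gamma \subseteq \Tc_3'$, I would simply inspect the abelianizations of the seven representatives $\widetilde{\tau}_a, \dots, \widetilde{\tau}_z$ written out in the previous subsection. Abelianizing $F_3 = \langle A, B, C\rangle$ to $\Z^3$, each $\widetilde{\tau}$ induces a linear map on $\Z^3$; reading off the exponent sums of $A$, $B$, $C$ in each coordinate, one checks that every one of the seven involutions acts on $\Z^3$ as a diagonal matrix with entries in $\{\pm 1\}$ — in fact (up to the obvious bookkeeping) each acts as $\pm I_3$ or a sign-change matrix that is $\equiv I_3$ in $\mathrm{PGL}(3,\Z)$. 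Hence each $\tau_\bullet$ lies in the kernel $\Tc_3'$ of $\Out(F_3) \to \mathrm{PGL}(3,\Z)$, and therefore so does the subgroup they generate. This step is a short, routine computation.

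For the reverse inclusion $\Tc_3' \subseteq \Gamma$, I would use the structure described in the excerpt: $\Tc_3'$ is generated by the Torelli group $\Tc_3$ together with the "global inversion" involution $\iota\colon (A,B,C)\mapsto(A^{-1},B^{-1},C^{-1})$. Note $\iota$ is literally $\widetilde{\tau}_d$, so $\iota \in \Gamma$. It therefore suffices to show $\Tc_3 \subseteq \Gamma$. Here I would invoke a known finite generating set for $\Tc_3 = \mathrm{IA}_3$: by Magnus's theorem, $\mathrm{IA}_n$ is generated by the automorphisms $K_{ij}\colon x_i \mapsto x_j x_i x_j^{-1}$ (fixing the other generators) and $K_{ijk}\colon x_i \mapsto x_i [x_j, x_k]$, for distinct indices. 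For $n=3$ this is an explicit finite list of generators. The task then reduces to expressing each Magnus generator $K_{ij}$ and $K_{ijk}$ as a word in $\tau_a, \dots, \tau_z$ inside $\Out(F_3)$. Concretely I would compute products such as $\tau_a \tau_b$, $\tau_x \tau_y$, $\tau_a \tau_x$, etc. — each product of two involutions is a genuine automorphism (a "Dehn twist" type element, as remarked in the paper) — identify which element of $\Out(F_3)$ it is by its action on $(A,B,C)$, and assemble the Magnus generators from these building blocks, using the relations among the $\tau$'s as needed.

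The main obstacle is this last bookkeeping step: writing each of the finitely many Magnus generators of $\mathrm{IA}_3$ explicitly as a word in the seven involutions. This is where the non-symmetry of the setup (the $\tau_a,\dots,\tau_d$ play a different role from $\tau_x,\tau_y,\tau_z$, exactly as equation \eqref{vertex} is not symmetric) makes the computation delicate — one must be careful about which conjugating letter appears and in which slot — but it is a finite verification. Once every $K_{ij}$ and $K_{ijk}$ has been exhibited inside $\Gamma$, combined with $\iota = \widetilde{\tau}_d \in \Gamma$ we get $\Tc_3' = \langle \Tc_3, \iota\rangle \subseteq \Gamma$, which together with the first inclusion gives $\Gamma = \Tc_3'$. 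I would present the explicit words in a table, and double-check each by evaluating the action on $(A,B,C)$ and confirming it agrees with the target automorphism up to inner automorphisms.
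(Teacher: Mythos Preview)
Your strategy is correct and coincides with the paper's own proof: both inclusions are handled exactly as you describe, with the reverse inclusion reduced to expressing the Magnus generators $K_{ij}$, $K_{ijk}$ of $\mathrm{IA}_3$ (together with $\iota=\tau_d$) as words in the seven involutions. The paper carries out the step you defer by first reducing to six generators via the relations $K_{ijk}=K_{ikj}^{-1}$ and $K_{ij}=K_{kj}^{-1}$ in $\Out(F_3)$, and then writing each as a short product (e.g.\ $K_{12}=\tau_z\circ\tau_d$, $K_{123}=\tau_d\circ\tau_x\circ\tau_a\circ\tau_z$); one small caveat is that your remark about ``sign-change matrices'' is imprecise (a non-scalar diagonal $\pm1$ matrix is \emph{not} trivial in $\mathrm{PGL}(3,\Z)$), but this is harmless since all seven involutions in fact abelianize to $-I_3$.
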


\begin{proof}
	First, it is clear that each involution $\tau_a, \dots, \tau_z$ is an element of $ \Tc_n'$. It remains to show that the generators of $\Tc_n'$ can be written as a product of the involutions. 
	We use the Magnus generating set of  $\Tc_n$ (see Bestvina-Bux-Margalit \cite{bes_dim}) given by their lift in $\mathrm{Aut} (F_n)$:
	$$ K_{ij}  : \begin{pmatrix}  A_1 \\  \vdots \\ A_i \\ \vdots \\ A_n \end{pmatrix}  \mapsto \begin{pmatrix}  A_1 \\  \vdots \\ A_j A_i A_j^{-1} \\ \vdots \\ A_n \end{pmatrix}
	\hspace{1cm}
		K_{ijk} : = \begin{pmatrix}  A_1 \\  \vdots \\ A_i \\ \vdots \\ A_n \end{pmatrix} \mapsto \begin{pmatrix}  A_1 \\  \vdots \\ A_i [ A_j , A_k ] \\ \vdots \\ A_n \end{pmatrix} $$
	
	We have $K_{ijk} =K_{ikj}^{-1}$ in $\Out (F_n)$, and, when $n = 3$, we have $K_{ij} = (K_{kj})^{-1}$ in $\Out (F_3)$. So we have the six generators for $\Tc_3$ given by:
	$$K_{12}, K_{23} , K_{31}, K_{123}, K_{231}, K_{312}.$$
	
	These six generators together with $\tau_d$ generate $\Tc_3'$. These generators can now be obtained as products of the involutions as follows:
	\begin{align*}
  	K_{12} &= \tau_z \circ \tau_d, \\
  	K_{23} &= \tau_x \circ \tau_d,\\
  	K_{31} &= \tau_y \circ \tau_d, \\
  	K_{123} &= \tau_d \circ \tau_x \circ \tau_a \circ \tau_z,\\
  	K_{231} &= \tau_d \circ \tau_y \circ \tau_b \circ \tau_x,\\
  	K_{312} &= \tau_d \circ \tau_z \circ \tau_c \circ \tau_y.
  \end{align*}
	This proves that the two groups $\Gamma$ and $\Tc_3'$ are equal.
\end{proof}

\bibliographystyle{plain}

\bibliography{sample2}

\begin{thebibliography}{10}

\bibitem{bes_dim}
Mladen Bestvina, Kai-Uwe Bux, and Dan Margalit.
\newblock Dimension of the {T}orelli group for {Out}(${F}_n$).
\newblock {\em Invent. Math.}, 170(1):1--32, 2007.

\bibitem{bow_mar}
Brian~H. Bowditch.
\newblock {M}arkoff triples and quasi-{F}uchsian groups.
\newblock {\em Proc. London Math. Soc.}, 77(3):697--736, 1998.

\bibitem{can_dyn}
Richard~D Canary.
\newblock Dynamics on character varieties: a survey.
\newblock Preprint arXiv:1306.5832, 2013.

\bibitem{fri_vor}
Robert Fricke and Felix Klein.
\newblock {\em Vorlesungen {\"u}ber die {T}heorie der automorphen
  {F}unktionen}.
\newblock Teubner, 1897.

\bibitem{gol_erg}
William~M. Goldman.
\newblock Ergodic theory on moduli spaces.
\newblock {\em Ann. Math.}, 146(3):475--507, 1997.

\bibitem{gol_the}
William~M. Goldman.
\newblock The modular group action on real {SL}(2)-characters of a one-holed
  torus.
\newblock {\em Geom. Topol.}, 7(1):443--486, 2003.

\bibitem{gol_dyn}
William~M. Goldman, Greg McShane, Ser~P. Tan, and George Stantchev.
\newblock Dynamics of the automorphism group of the two-generator free group on
  the space of isometric actions on the hyperbolic plane.
\newblock Preprint arXiv:1509.03790, 2015.

\bibitem{hu_pol}
Hengnan Hu, Ser~Peow Tan, and Ying Zhang.
\newblock Polynomial automorphisms of $\mathbb{C}^n$ preserving the
  {M}arkoff-{H}urwitz equation.
\newblock Preprint arXiv:1501.06955, 2015.

\bibitem{hua_sim}
Yi~Huang and Paul Norbury.
\newblock Simple geodesics and {M}arkoff quads.
\newblock Preprint arXiv:1312.7089, 2013.

\bibitem{mal_ont}
Sara Maloni, Frederic Palesi, and Ser~Peow Tan.
\newblock On the character varieties of the four-holed sphere.
\newblock {\em Groups Geom. Dyn.}, 9(3):737--782, 2015.

\bibitem{min_ond}
Yair~N. Minsky.
\newblock On dynamics of {O}ut(${F}_n$) on {PSL}(2,{C}) characters.
\newblock {\em Israel J. Math.}, 193(1):47--70, 2013.

\bibitem{pal_erg}
Frederic Palesi.
\newblock Ergodic actions of mapping class groups on moduli spaces of
  representations of non-orientable surfaces.
\newblock {\em Geom. Dedicata}, 151(1):107--140, 2011.

\bibitem{sch_the}
Martin Scharlemann.
\newblock The complex of curves on nonorientable surfaces.
\newblock {\em J. London Math. Soc. (2)}, 25(1):171--184, 1982.

\bibitem{tan_gen}
Ser~Peow Tan, Yan~Loi Wong, and Ying Zhang.
\newblock Generalized {M}arkoff maps and {M}c{S}hane's identity.
\newblock {\em Adv. Math.}, 217(2):761--813, 2008.

\end{thebibliography}

\end{document}